\documentclass[notitlepage,11pt,reqno]{amsart}

\usepackage[a4paper,left=2cm,right=2.5cm,top=2.5cm, bottom = 2cm,headsep = 1cm ,letterpaper,margin=2.3cm]{geometry}
\usepackage{header}
\usepackage[utf8]{inputenc}
\usepackage[T1]{fontenc}
\usepackage{hyperref}
\hypersetup{
    colorlinks=true,
    allcolors=blue
}

\numberwithin{equation}{section}
\begin{document}

\title{Codimension-one foliations on adjoint varieties}

\author{Crislaine  \textsc{Kuster}} 

\address{\noindent Crislaine Kuster: IMPA, Estrada Dona Castorina 110, Rio de
  Janeiro, 22460-320, Brazil \newline
Université Bourgogne Europe, CNRS, IMB UMR 5584, F-21000 Dijon, France} 

\email{crislaine.kuster@impa.br // Crislaine.Kuster@u-bourgogne.fr // crislainekeizy@gmail.com}
\thanks{ }

\begin{abstract}
In this paper, we classify codimension-one foliations on adjoint varieties with most positive anti-canonical class. On adjoint varieties of Picard number one, we show that such foliations are always induced by a pencil of hyperplane sections with respect to the minimal embedding. For adjoint varieties of Picard number two, we prove that the space of such foliations contains more than one irreducible component, and we describe each of them. As a tool for understanding these foliations, we introduce the concept of the degree of a foliation with respect to a family of rational curves, which may be of independent interest.
\end{abstract}

\maketitle
\setcounter{tocdepth}{1}
{
\hypersetup{linkcolor=violet}

\renewcommand*\contentsname{Contents}

\setcounter{tocdepth}{1}

\renewcommand{\baselinestretch}{0}

\tableofcontents

\renewcommand{\baselinestretch}{1.0}\normalsize
  
}

\section{Introduction}
Let $X$ be a smooth complex projective variety of dimension $n$. A codimension-one foliation $\mathcal{F}$ on $X$ is defined by a rank-$(n-1)$ subsheaf $T_{\mathcal{F}}$ of the tangent bundle $T_X$ such that $T_{\mathcal{F}}$ is closed under the Lie bracket and saturated, i.e., the quotient $T_X/T_{\mathcal{F}}$ is torsion-free. The singular locus of $\mathcal{F}$, denoted by $\operatorname{Sing}(\mathcal{F})$, is the locus of points where $T_X/T_{\mathcal{F}}$ is not locally free. The sheaf $N_{\mathcal{F}} = (T_X/T_{\mathcal{F}})^{\vee\vee}$ is called the normal sheaf of $\mathcal{F}$; it is invertible. The canonical bundle of a foliation with normal bundle $\mathcal{N}$ can be defined as $K_{\mathcal{F}} = K_X \otimes \mathcal{N}$, where $K_X$ denotes the canonical bundle of $X$. Alternatively, $K_{\mathcal{F}}$ corresponds to the determinant of $T_{\mathcal{F}}^*$.

By the Frobenius integrability condition, the requirement that $T_{\mathcal{F}}$ is closed under the Lie bracket translates to the foliation $\mathcal{F}$ being defined by a $1$-form $[\omega] \in \mathbb{P}H^0(X, \Omega_X^1 \otimes N_{\mathcal{F}})$ such that
\[
\omega \wedge d\omega = 0.
\]
The condition that $T_{\mathcal{F}}$ is saturated translates to the assumption that $\operatorname{codim}_X \operatorname{Sing}(\omega) \ge 2$, where $\operatorname{Sing}(\omega) = \operatorname{Sing}(\mathcal{F})$ is the set of points in $X$ where the $1$-form $\omega$ vanishes. Consequently, if we fix a line bundle $\mathcal{N}$ on $X$, the space of codimension-one foliations on $X$ with normal bundle $\mathcal{N}$ is a quasi-projective variety given by
\[
\operatorname{Fol}(X, \mathcal{N}) = \bigl\{ [\omega] \in \mathbb{P}H^0(X, \Omega_X^1 \otimes \mathcal{N}) \;:\; \omega \wedge d\omega = 0,\; \operatorname{cod} \operatorname{Sing}(\omega) \ge 2 \bigr\} \subset \mathbb{P}H^0(X, \Omega_X^1 \otimes \mathcal{N}).
\]
The description of this space, for a fixed variety $X$ and a fixed line bundle $\mathcal{N}$, is an interesting problem in the global theory of holomorphic foliations. When $X = \mathbb{P}^n$, $n \ge 3$, a classical geometric invariant for codimension-one foliations is the degree, defined as the number of tangencies of the foliation with a general line. One can easily see that a codimension-one foliation of degree $d$ has normal sheaf $N_{\mathcal{F}} \cong \mathcal{O}_{\mathbb{P}^n}(d+2)$ and canonical bundle $K_{\mathcal{F}} \cong \mathcal{O}_{\mathbb{P}^n}(d-n+1)$.

The description of the space of codimension-one foliations on $\mathbb{P}^n$, $n \ge 3$, of a fixed degree $d$ is already a very challenging problem. A complete description of $\operatorname{Fol}(\mathbb{P}^n, \mathcal{O}_{\mathbb{P}^n}(d+2))$ is only known for $d \le 2$. It is a classical result that every codimension-one foliation with $d=0$ is given by a pencil of hyperplanes (see, e.g., \cite[Theorem 4.3]{ACM:FanoDist}). For $d=1$, there are two irreducible components in the space of codimension-one foliations, a result proved by Jouanolou in 1979 \cite{Jouanolou}. For $d=2$, the space $\operatorname{Fol}(\mathbb{P}^n, \mathcal{O}_{\mathbb{P}^n}(4))$ has six irreducible components, as proved by Cerveau and Lins Neto in 1996 \cite{CLN:components}. For $d=3$, there is a partial classification due to da Costa, Lizarbe, and Pereira in \cite{CLP:deg3}; they proved that there exist at least 24 components in the space of degree-three codimension-one foliations on $\mathbb{P}^n$.

Many researchers have worked on the problem of describing the moduli space of foliations for other varieties, imposing conditions either on the ambient variety or on the canonical bundle of the foliation. For example, in \cite{LPT13}, Loray, Pereira, and Touzet describe the space of foliations with trivial canonical bundle on Fano 3-folds. In \cite{AD-fano,AD17}, Araujo and Druel described the space of codimension-one foliations with ample anticanonical bundle on smooth projective varieties.

Another approach to this problem is to examine the behavior of foliations under restriction. Let $X$ be a smooth projective variety embedded in a projective space. One can investigate whether a codimension-one foliation on $X$ is a restriction of a foliation on the ambient space. In this direction, we refer to \cite{figueira} for quadrics, \cite{ACM:FanoDist} for complete intersections, and \cite{BFM} for cominuscule Grassmannians.

In \cite{BFM}, Benedetti, Faenzi, and Muniz focused on describing the space of codimension-one foliations on rational homogeneous varieties, i.e., varieties that admit a transitive action of a complex simple Lie group. Their work focuses on cominuscule Grassmannians, which are varieties whose tangent bundles are completely reducible. A natural next step is to study foliations on homogeneous varieties with tangent bundles that are not completely reducible. In this direction, our goal is to describe the space of codimension-one foliations on adjoint varieties.

Adjoint varieties are Fano varieties with a contact structure, which can be described as follows. Let $\mathfrak{g}$ be a simple Lie algebra and $G$ its Lie group. The group $G$ acts transitively on $\mathbb{P}\mathfrak{g}$ via the adjoint representation. The unique closed orbit of this action is called the \emph{adjoint variety} associated to $\mathfrak{g}$. We refer to the natural embedding $X \hookrightarrow \mathbb{P}\mathfrak{g}$ as the adjoint embedding ( see Section \ref{adjoint-var}). For the simple Lie algebra of type $A_n$, the adjoint variety has Picard number 2 and corresponds to a smooth hyperplane section of the product $\mathbb{P}^n \times \mathbb{P}^n$ under the Segre embedding. Except for this case, all remaining adjoint varieties have Picard number 1.

It is conjectured that any Fano contact variety is an adjoint variety; see, for instance, the survey \cite[Section 4.3]{beauville1999riemannian}. In a series of two papers \cite{kebekus2000uniqueness, Kebekus2003LinesOC}, Kebekus investigated the deformation theory of minimal rational curves on Fano contact varieties, showing that these varieties are covered by a proper family of rational curves. Motivated by this, to study codimension-one foliations on adjoint varieties, we begin with a more general setup: let $X$ be a smooth projective uniruled variety and $H$ a minimal dominating family of rational curves on $X$ (see Section \ref{ratcurves-ftang} for the definitions). We introduce the notion of the degree of a foliation with respect to a family of curves.

\begin{definition}\label{D:degree}
Let $X$ be a smooth projective uniruled variety and $H$ a dominating family of rational curves on $X$. The \emph{degree of a codimension-one foliation $\mathcal{F}$ on $X$ with respect to $H$}, denoted by $\deg_H \mathcal{F}$, is defined as the number of tangencies of $\mathcal{F}$ with a general curve in $H$ if this number is finite. If a general curve in $H$ is generically tangent to $\mathcal{F}$, we say that $\mathcal{F}$ has degree $-\infty$ with respect to $H$.
\end{definition}

Let $\mathcal{F}$ be a codimension-one foliation on $X$ and $H$ a dominating family of rational curves on $X$. Suppose that $\deg_H \mathcal{F} \ge 0$; then
\[
\deg_H \mathcal{F} = N_{\mathcal{F}} \cdot C - 2
\]
for a general curve $[C] \in H$. Note that when $X = \mathbb{P}^n$ and $H$ is the unique minimal dominating family of rational curves (namely, the family of lines), $\deg_H \mathcal{F}$ coincides with the classical definition of the degree of a codimension-one foliation on $\mathbb{P}^n$. If $X$ has Picard number one, there is no foliation tangent to the general curve in $H$ (see Proposition \ref{P:deg-maior-que-0}). In other words, the degree of a codimension-one foliation is always non-negative in that case.

Now let $X \subset \mathbb{P}^N$ be a smooth subvariety covered by lines, and let $H$ be a minimal dominating family of lines on $X$. If $\mathcal{F}$ is a codimension-one foliation on $X$ induced by a degree-zero codimension-one foliation on $\mathbb{P}^N$, then $\deg_H \mathcal{F} = 0$. This leads to the following question: when does the converse hold?

Recall that a smooth projective uniruled variety with Picard number one is Fano (see Remark~\ref{Fano}). Thus, to address this question, we first study codimension-one foliations on smooth Fano varieties of Picard number one with degree zero with respect to a minimal dominating family of rational curves. In particular, this includes adjoint varieties of Picard number one. We obtain the following result.

\begin{theorem}\label{mainA}
Let $\mathcal{F}$ be a codimension-one foliation on a smooth Fano variety $X$ of Picard number one, and let $H$ be a minimal dominating family of rational curves on $X$. Suppose that $\deg_H \mathcal{F} = 0$. Then $\mathcal{F}$ is algebraically integrable; that is, $\mathcal{F}$ is induced by a dominant rational map $X \dashrightarrow \mathbb{P}^1$.
\end{theorem}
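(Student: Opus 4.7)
The plan is to extract numerical constraints from the hypothesis $\deg_H \mathcal F=0$, use these to place $\mathcal F$ in a regime covered by existing classifications of Fano codimension one foliations, and then conclude algebraic integrability to a rational curve.

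First I would unpack the degree-zero hypothesis. Since $X$ has Picard number one, write $N_\mathcal F\cong L^{\otimes m}$ with $L$ the ample generator of $\mathrm{Pic}(X)$; the identity $\deg_H\mathcal F=N_\mathcal F\cdot C-2$ then fixes $N_\mathcal F\cdot C=2$ on a general $[C]\in H$. Moreover, the tangency scheme of $\mathcal F$ with $C$ is a zero-dimensional subscheme of $C\cong\mathbb P^1$ of length $\deg_H\mathcal F=0$, hence empty. Equivalently, the composition $T_C\hookrightarrow T_X|_C\twoheadrightarrow N_\mathcal F|_C$ is a nonzero map between degree-two line bundles on $\mathbb P^1$, and so an isomorphism. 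This yields the pointwise splitting $T_X|_C=T_\mathcal F|_C\oplus T_C$ for a general $C\in H$.

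Next I would estimate the positivity of $K_\mathcal F$. From $K_\mathcal F\cdot C=K_X\cdot C+2$, together with the bound $-K_X\cdot C\ge 2$ (valid because $T_C\cong\mathcal O_{\mathbb P^1}(2)$ embeds into the nef bundle $T_X|_C$ for a free curve, forcing some summand of degree $\ge 2$ in the Grothendieck splitting), one obtains $-K_\mathcal F\cdot C\ge 0$. Picard number one then forces either $-K_\mathcal F$ ample or $K_\mathcal F\cong \mathcal O_X$. In the ample case I would invoke the Araujo--Druel classification of codimension one Fano foliations to identify $\mathcal F$ as induced by a pencil of sections of an appropriate power of $L$, hence algebraically integrable with base $\mathbb P^1$. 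In the boundary case $K_\mathcal F\cong \mathcal O_X$ one instead appeals to the structure theory for foliations with numerically trivial canonical bundle in the spirit of Loray--Pereira--Touzet, combined with the rigidity produced by pointwise transversality along every minimal rational curve, to exhibit an algebraic first integral; rational connectedness of $X$ then forces the target of any such integral to be $\mathbb P^1$.

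The main obstacle lies precisely in the boundary case $K_\mathcal F\equiv 0$, where no intrinsic positivity of the foliation is available and one must argue from the geometry of $H$ itself. Concretely, the task is to promote the pointwise transversality of $\mathcal F$ with a general $C\in H$ into a global rational fibration $X\dashrightarrow\mathbb P^1$, using deformations of $C$ inside $H$ together with the Frobenius integrability $\omega\wedge d\omega=0$ to show that the analytic leaves of $\mathcal F$ extend to algebraic divisors. The simple connectivity of Fano manifolds of Picard number one then precludes monodromy obstructions and delivers the desired first integral.
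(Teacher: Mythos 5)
Your opening reductions are sound: $N_{\mathcal{F}}\cdot C=2$, the splitting $T_X|_C=T_{\mathcal{F}}|_C\oplus T_C$ for a general free curve, and the resulting dichotomy ``$-K_{\mathcal{F}}$ ample or $K_{\mathcal{F}}\cong\mathcal{O}_X$'' all check out. The problem is that neither branch of the dichotomy is actually closed by the tools you invoke. In the ample branch, the Araujo--Druel classification covers Fano foliations whose index is close to the rank (pencils of hyperplanes at index $=r$, del Pezzo foliations at index $r-1$, and partial results at $r-2$); here the index of $\mathcal{F}$ on the generator is only controlled by $-K_X\cdot C-2$, which for the varieties this theorem is aimed at is far below the rank (e.g.\ on the $G_2$ adjoint fivefold one gets index $1$ against rank $4$), so no classification applies and there is no general theorem asserting that codimension one Fano foliations are algebraically integrable. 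In the trivial branch, the structure theory for $K_{\mathcal{F}}\equiv 0$ that you gesture at is established for Fano threefolds (Loray--Pereira--Touzet) and, in higher dimension, under hypotheses on the singularities of $\mathcal{F}$ that you have not verified; more importantly, you yourself identify the real content of the theorem --- promoting pointwise transversality along curves of $H$ into an algebraic first integral --- as ``the task,'' and then do not carry it out. As written, the proposal is a correct reduction followed by two appeals to results that do not cover the situation, so it does not constitute a proof.

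For comparison, the paper avoids the numerical dichotomy on $K_{\mathcal{F}}$ entirely and instead runs a dichotomy on the tangential foliation $\mathcal{F}_{\mathrm{tang}}$ living on the space of morphisms $M$ parametrizing $H$. If $\mathcal{F}_{\mathrm{tang}}$ is algebraically integrable, a dimension count on leaves of $\mathcal{F}_{\mathrm{tang}}$ and their images under the evaluation map (Lemma \ref{L:Ftang alg int}, which uses the splitting $f^*T_{\mathcal{F}}\cong\mathcal{O}_{\mathbb{P}^1}(1)^{\oplus p}\oplus\mathcal{O}_{\mathbb{P}^1}^{\oplus(n-1-p)}$ you also derived) forces $\mathcal{F}$ itself to be algebraically integrable. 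If not, then because $\rho(X)=1$ and $\deg_H\mathcal{F}=0$ the general curve of $H$ meets every $\mathcal{F}$-invariant hypersurface non-trivially and transversely, so Theorem \ref{T: teo C LPT} produces a closed rational $1$-form defining $\mathcal{F}$; the degree condition then pins down the polar divisor as two members of $|\mathcal{O}_X(1)|$ with opposite residues, i.e.\ a pencil of hyperplane sections. That second step is exactly the missing mechanism that converts transversality to minimal rational curves into algebraicity, and it is the part your proposal leaves open.
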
The adjoint variety associated with the Lie algebra \( \mathfrak{sp}(2n) \) is the projective space \( \mathbb{P}^{2n-1} \) under the second Veronese embedding  
\[
\nu_2\colon \mathbb{P}^{2n-1} \hookrightarrow \mathbb{P}(\mathfrak{sp}(2n)),
\]  
which is not the minimal embedding. Note that the Veronese variety \(\nu_2(\mathbb{P}^{2n-1})\) does not contain lines from the ambient space; rather, it is covered by conics.
The pullback of a codimension-one foliation of degree zero from the ambient space via \( \nu_2 \) is a codimension-one foliation of degree two on \( \mathbb{P}^{2n-1} \).  

For \(n\ge 3\), the space of codimension-one foliations of degree two on \( \mathbb{P}^{2n-1} \) has six irreducible components, whereas the space of codimension-one foliations of degree zero on \( \mathbb{P}(\mathfrak{sp}(2n)) \) has only one irreducible component. Consequently, not every foliation on the adjoint variety \( X \) associated with \( \mathfrak{sp}(2n) \) is the restriction, under the embedding \( X \hookrightarrow \mathbb{P}(\mathfrak{sp}(2n)) \), of a foliation from the ambient space.

If \( X \) is an adjoint variety with Picard number one that is not isomorphic to \( \mathbb{P}^{2n-1} \), then \( X \) is covered by lines under the adjoint embedding, which in this case coincides with the minimal embedding (see \cite[Section 2.3]{kebekus2000uniqueness}). In this setting, we have the following result:

\begin{theorem}\label{T: adjoind picard one}
Let \( X \) be an adjoint variety with Picard number one not isomorphic to \( \mathbb{P}^n \), and let \(H\) be a minimal dominating family of rational curves on \(X\). Let \(\mathcal{F}\) be a codimension-one foliation on \(X\) with \(\deg_H \mathcal{F}=0\). Then the foliation \(\mathcal{F}\) is a pencil of hyperplane sections under the adjoint embedding.
\end{theorem}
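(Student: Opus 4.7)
The plan is to combine Theorem \ref{mainA} with a Riemann--Hurwitz calculation along a general curve of the family $H$ to pin down the linear class of a leaf.

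First, I would invoke Theorem \ref{mainA}, which applies because $X$ is Fano of Picard number one and $\deg_H \mathcal{F} = 0$; this produces a dominant rational map $\phi\colon X \dashrightarrow Y$ to a curve such that $\mathcal{F}$ is the foliation by closures of fibers of $\phi$. Since $X$ is rationally connected, so is $Y$, hence $Y \cong \mathbb{P}^1$; passing to the Stein factorization, I may further assume that the general fiber of $\phi$ is connected. Let $F$ denote the class of a general fiber of $\phi$. As $\mathrm{Pic}(X) = \mathbb{Z}\cdot \mathcal{O}_X(1)$, where $\mathcal{O}_X(1)$ is the hyperplane class of the adjoint embedding, I can write $F \equiv m\, \mathcal{O}_X(1)$ with $m\ge 1$, and the goal reduces to showing $m = 1$.

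The key step is restriction to a general curve of $H$. By the assumption $X \not\cong \mathbb{P}^n$, the family $H$ consists of lines under the adjoint embedding, so $\mathcal{O}_X(1) \cdot C = 1$ for $C \in H$. The hypothesis $\deg_H \mathcal{F} = 0 \ne -\infty$ says a general $C$ is not contained in a leaf, so $\phi|_C \colon C \to \mathbb{P}^1$ is a non-constant morphism between smooth rational curves of degree $F \cdot C = m$. Tangencies of $\mathcal{F}$ with $C$ coincide (with multiplicities) with ramification points of $\phi|_C$, since the leaf through $p \in C$ is tangent to $C$ at $p$ precisely when $d(\phi|_C)_p = 0$. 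Riemann--Hurwitz then gives
\[
\deg_H \mathcal{F} \;=\; \sum_{p \in C}(e_p - 1) \;=\; 2m - 2,
\]
so $\deg_H \mathcal{F} = 0$ forces $m = 1$. Hence the general leaf of $\mathcal{F}$ is a hyperplane section under the adjoint embedding, and $\mathcal{F}$ is the pencil of hyperplane sections defining $\phi$.

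The main technical obstacle I expect is verifying that a general $C \in H$ avoids both the indeterminacy locus of $\phi$ and the singular locus of $\mathcal{F}$, each of codimension at least two, so that the restriction $\phi|_C$ is well-defined on all of $C$ and the tangency/ramification identification is literal; this should be a standard dimension count using that $H$ is a dominating family. A secondary local point is the identification of tangency and ramification multiplicities, which follows from a short computation comparing the vanishing order on $C$ of a defining $1$-form of $\mathcal{F}$ with that of $d(\phi|_C)$.
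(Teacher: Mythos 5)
Your first step — invoking Theorem \ref{mainA} to get a first integral $\phi\colon X\dashrightarrow \mathbb{P}^1$ and reducing to showing that a general fibre lies in $|\mathcal{O}_X(1)|$ — is exactly how the paper begins. The gap is in the sentence identifying tangencies of $\mathcal{F}$ with ramification points of $\phi|_C$. The saturated $1$-form $\omega$ defining $\mathcal{F}$ (the one whose zeros on $C$ count tangencies) is not $\phi^*(z_0\,dz_1-z_1\,dz_0)$ but its saturation: the two differ by the divisor $\sum_{i,j}(m_{ij}-1)F_{ij}$ supported on the non-reduced components $m_{ij}F_{ij}$ of the fibres of $\phi$. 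At a point where a general line $C\in H$ crosses such a component transversally, $\phi|_C$ ramifies with index $m_{ij}$ while the leaf through that point is $F_{ij}$ itself, transverse to $C$ — a ramification point that is not a tangency. The correct identity is therefore
\[
0=\deg_H\mathcal{F}=N_\mathcal{F}\cdot C-2=(2m-2)-\sum_{i,j}(m_{ij}-1)\,(F_{ij}\cdot C),
\]
and this does not force $m=1$: it is satisfied, for instance, by $m=2$ with two double fibres, or by $m=3$ with four fibres of type $2G_1+G_2$ with $G_1,G_2$ hyperplane sections. Your deduction $2m-2=0$ silently assumes all fibres are reduced, which is essentially the statement to be proved.

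Eliminating these multiple-fibre configurations is where all of the paper's remaining work lies, and it is there that the hypothesis that $X$ is an adjoint variety (and not merely a Fano of Picard number one covered by lines) actually enters. Concretely: when $T_\mathcal{F}$ is semi-stable, the paper uses \cite[Theorem 1.2]{LPT13} to get irreducibility of all fibres, \cite[Theorem 3.3]{LPT13} to bound the number of multiple fibres by two, the bookkeeping $\frac{m}{m_1}+\frac{m}{m_2}=2$ (the correct version of your Riemann--Hurwitz count), and \cite[Proposition 3.5]{LPT13} to conclude; when $T_\mathcal{F}$ is not semi-stable, the maximal destabilizing subsheaf yields an integrable $2$-form in $H^0(X,\Omega^2_X(k))$ with $k\le 2$, and ruling that out is the content of the long contact-geometric and Bott--Borel--Weil computation that occupies most of Section \ref{sec4-picard1}. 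The fact that your argument never uses the adjoint structure is a symptom of the gap: if tangency coincided with ramification, none of that analysis would be needed.
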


Therefore, we conclude that the space of codimension-one foliations of degree zero with respect to a minimal dominating family of rational curves on adjoint varieties of Picard number one has only one irreducible component. 

Furthermore, we consider codimension-one foliations on adjoint varieties of Picard number two, that is, general hyperplane sections of \(\mathbb{P}^n \times \mathbb{P}^n\) under the Segre embedding, \(n\ge 2\). Consider the two natural projections \(\pi_1 \colon X \longrightarrow \mathbb{P}^n\) and \(\pi_2 \colon X \longrightarrow \mathbb{P}^n\).
For \(i = 1,2\), let \(h_{i}\) be the pullback by \(\pi_i\) of the hyperplane class on \(\mathbb{P}^n\). Then \(\mathcal{O}_X(1) = \mathcal{O}_X(h_{1} + h_{2})\) and \(\omega_X = \mathcal{O}_X(-n)\). 
There are two minimal dominating families of rational curves on \(X\): one consists of lines contained in the fibers of the first projection \(\pi_1\), denoted by \(H_1\), and the other, denoted by \(H_2\), consists of lines contained in the fibers of the second projection \(\pi_2\).

We aim to study codimension-one foliations of minimal degrees with respect to these two families. Our first result in this direction is the following:

\begin{proposition}\label{P:Picard two}
Let \( X \) be a smooth hyperplane section of \(\mathbb{P}^n \times \mathbb{P}^n\) and let \( \mathcal{F} \) be a codimension-one foliation on \(X\). If \( \deg_{H_1} \mathcal{F} = -\infty \), then \( \mathcal{F} \) is the pullback by \(\pi_1\) of a codimension-one foliation on \( \mathbb{P}^n\) of degree equal to \( \deg_{H_2} \mathcal{F}\).
\end{proposition}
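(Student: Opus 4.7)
The plan is to show that the relative tangent sheaf $T_{\pi_1}:=T_{X/\mathbb{P}^n}$ of $\pi_1$ is contained in $T_\mathcal{F}$, to descend $\mathcal{F}$ along $\pi_1$ to a foliation $\mathcal{G}$ on $\mathbb{P}^n$, and to compare degrees using that a general curve in $H_2$ maps isomorphically onto a line in $\mathbb{P}^n$. Since $X$ is a smooth hyperplane section of $\mathbb{P}^n\times \mathbb{P}^n$, the defining bilinear form is nondegenerate and every fiber of $\pi_1$ is a hyperplane $\mathbb{P}^{n-1}\subset \mathbb{P}^n$; in particular $\pi_1$ is a smooth $\mathbb{P}^{n-1}$-bundle and $T_{\pi_1}$ is a rank-$(n-1)$ subbundle of $T_X$. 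The hypothesis $\mathrm{deg}_{H_1}\mathcal{F}=-\infty$ means a general $C\in H_1$ is entirely tangent to $\mathcal{F}$, i.e.\ $T_C\subset T_\mathcal{F}|_C$; this is a closed condition on $H_1$, so every $C\in H_1$ is tangent to $\mathcal{F}$. Because lines in $\mathbb{P}^{n-1}$ through a fixed point span its tangent space, the members of $H_1$ through any $p\in X$ span $T_{\pi_1}|_p$, whence $T_{\pi_1}\subset T_\mathcal{F}$.

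Next I would descend $\mathcal{F}$ along $\pi_1$. Since $\pi_1$ is smooth and proper with connected fibers and $T_{\pi_1}\subset T_\mathcal{F}$, each leaf $L$ of $\mathcal{F}$ is $\pi_1$-saturated: for $p\in L$, integrating vector fields in $T_{\pi_1}$ from $p$ stays in $L$ and sweeps out the whole fiber $F_{\pi_1(p)}$. The assignment $T_\mathcal{G}|_q:=d\pi_1(T_\mathcal{F}|_p)$ for any $p\in F_q$ (independent of $p$, since $F_q$ lies in a single leaf) then defines a saturated rank-$(n-1)$ subsheaf $T_\mathcal{G}\subset T_{\mathbb{P}^n}$ whose Frobenius integrability is inherited from that of $\mathcal{F}$; this gives a codimension-one foliation $\mathcal{G}$ on $\mathbb{P}^n$. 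Comparing the saturated rank-$(n-1)$ subsheaves $T_\mathcal{F}/T_{\pi_1}$ and $\pi_1^*T_\mathcal{G}$ of $\pi_1^*T_{\mathbb{P}^n}$, which agree on a dense open set, gives $T_\mathcal{F}/T_{\pi_1}=\pi_1^*T_\mathcal{G}$; hence $\mathcal{F}=\pi_1^*\mathcal{G}$ and $N_\mathcal{F}=\pi_1^*N_\mathcal{G}$.

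For the degree comparison, a general $C\in H_2$ is a line in a fiber of $\pi_2$, so $\pi_1|_C\colon C\to\pi_1(C)$ is an isomorphism onto a line in $\mathbb{P}^n$. Because $T_\mathcal{F}=d\pi_1^{-1}(T_\mathcal{G})$, a point $p\in C$ is a tangency of $\mathcal{F}$ with $C$ precisely when $\pi_1(p)$ is a tangency of $\mathcal{G}$ with $\pi_1(C)$, yielding $\mathrm{deg}_{H_2}\mathcal{F}=\mathrm{deg}\,\mathcal{G}$. I expect the main technical difficulty to be the descent step: turning the leaf-theoretic picture into the sheaf-level identification $\mathcal{F}=\pi_1^*\mathcal{G}$ and verifying that $N_\mathcal{F}$ is a pullback, which can be done by checking that the restriction of $N_\mathcal{F}$ to a general fiber of $\pi_1$ is trivial.
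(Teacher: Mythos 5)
Your proposal is correct and follows essentially the same route as the paper's proof: deduce $T_{\pi_1}\subset T_{\mathcal{F}}$ from the tangency of the general member of $H_1$, descend $\mathcal{F}$ along the $\mathbb{P}^{n-1}$-bundle $\pi_1$ to a foliation $\mathcal{G}$ on $\mathbb{P}^n$, and compare degrees using that a general curve of $H_2$ maps isomorphically under $\pi_1$ onto a general line of $\mathbb{P}^n$. You merely supply more detail than the paper at the first two steps (the spanning argument for $T_{\pi_1}\subset T_{\mathcal{F}}$ and the sheaf-level descent), which the paper asserts without elaboration.
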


Consider now the adjoint embedding of \( X \) into a projective space, \( s\colon X \hookrightarrow \mathbb{P}^N \). Observe that the pullback of a codimension-one foliation of degree zero on \( \mathbb{P}^N \) via \( s \) defines a codimension-one foliation on \( X \) with degree zero with respect to both families. This raises the following question: are all foliations on \( X \) with degree zero with respect to both families obtained as the pullback of a degree zero foliation on \( \mathbb{P}^N \)?  

Since the space of codimension-one foliations of degree zero on \( \mathbb{P}^N \) has a unique irreducible component, the answer to this question is no, as shown in the following result:

\begin{theorem}\label{T:Picard two}
Let \( X \) be a smooth hyperplane section of \(\mathbb{P}^n \times \mathbb{P}^n\) and let \(H_1\) and \(H_2\) be the families of lines contained in fibers of the two natural projections of \(X\) onto \(\mathbb{P}^n\). Let \( \mathcal{F} \) be a codimension-one foliation on \(X\). 
Suppose that 
\[
\deg_{H_i} \mathcal{F}=0,\quad \text{for } i=1,2.
\]
If \(n \ge 3\), then there are two irreducible components in the space of codimension-one foliations with degree zero with respect to both families (see Proposition \ref{P: logarithmic on PnxPn}). 
If \(n=2\), there also exists a rigid foliation associated with the affine Lie algebra, resulting in three irreducible components in the space of codimension-one foliations of degree zero with respect to both families (see Theorem \ref{teo-comp-A2}).
\end{theorem}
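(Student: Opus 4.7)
My strategy has two stages: pinning down the normal bundle from the degree hypotheses, then classifying the integrable $1$-forms that realise it. A general $C_1\in H_1$ satisfies $C_1\cdot h_1=0$, $C_1\cdot h_2=1$, and symmetrically for $C_2\in H_2$. Writing $N_{\mathcal{F}}=\mathcal{O}_X(ah_1+bh_2)$, the relation $N_{\mathcal{F}}\cdot C=\deg_{H}\mathcal{F}+2$ applied to each family forces $a=b=2$, so $N_{\mathcal{F}}=\mathcal{O}_X(2)$. The problem reduces to describing the integrability locus inside $\mathbb{P}H^0(X,\Omega^1_X\otimes\mathcal{O}_X(2))$.

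\textbf{Two explicit families.} I produce two irreducible families of integrable forms with this normal bundle. The first, $\mathcal{P}$, consists of pencils of hyperplane sections: a $2$-plane $V\subset H^0(X,\mathcal{O}_X(1))$ gives a rational map $X\dashrightarrow \mathbb{P}V^{\vee}$ whose fibres are the leaves, and these foliations are parametrised by $\mathrm{Gr}(2,H^0(X,\mathcal{O}_X(1)))$. The second, $\mathcal{L}$, comes from closed logarithmic forms
\[
\omega\;=\;\alpha\,\pi_1^{*}\!\left(\frac{dF_1}{F_1}-\frac{dF_2}{F_2}\right)\;+\;\beta\,\pi_2^{*}\!\left(\frac{dG_1}{G_1}-\frac{dG_2}{G_2}\right),
\]
with $F_i,G_j\in H^0(\mathbb{P}^n,\mathcal{O}(1))$ and $(\alpha:\beta)\in\mathbb{P}^1$; after clearing denominators this is a section of $\Omega^1_X\otimes\mathcal{O}_X(2)$, closedness gives integrability, and a direct computation on a general curve of each family yields $\deg_{H_i}\mathcal{F}=0$ for both $i$. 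For generic $(\alpha:\beta)$ the first integral $u^{\alpha}v^{\beta}$ is transcendental, so $\mathcal{L}\not\subset\overline{\mathcal{P}}$.

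\textbf{Completeness and the exceptional case.} To show that $\mathcal{P}$ and $\mathcal{L}$ exhaust the components when $n\ge3$, I compute $H^0(X,\Omega^1_X\otimes\mathcal{O}_X(2))$ from the Euler sequences on the two $\mathbb{P}^n$-factors together with the adjunction for $X\hookrightarrow \mathbb{P}^n\times\mathbb{P}^n$, decompose a generic $\omega$ according to the induced bigrading, and invoke $\omega\wedge d\omega=0$ to force $[\omega]\in\overline{\mathcal{P}}\cup\overline{\mathcal{L}}$. Completeness is the main obstacle, since one must rule out exotic integrable forms; I expect this to proceed via a careful analysis of the singular scheme of $\omega$ together with Kupka--Medeiros-type arguments on the codimension-two components of $\mathrm{Sing}(\omega)$. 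Finally, for $n=2$ the flag threefold $X=F(0,1;2)$ carries an action of the two-dimensional non-abelian Lie subalgebra $\mathfrak{aff}(1)\subset\mathfrak{sl}_3$, whose orbits are surfaces and hence define a codimension-one foliation; uniqueness of such subalgebras up to $\mathrm{PGL}_3$-conjugation yields rigidity, and a direct check shows the resulting foliation lies in neither $\mathcal{P}$ nor $\mathcal{L}$, giving the third component. For $n\ge3$ the analogous subalgebra acts with orbits of codimension at least two, so this construction does not produce a codimension-one foliation.
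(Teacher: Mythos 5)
Your set-up is correct: the degree hypotheses do force $N_{\mathcal{F}}\cong\mathcal{O}_X(2h_1+2h_2)$, and the two families you exhibit are exactly the two components the paper finds (the pencils of hyperplane sections are $\mathrm{Log}(1,1)$, and your forms $\alpha\,\pi_1^{*}(dF_1/F_1-dF_2/F_2)+\beta\,\pi_2^{*}(dG_1/G_1-dG_2/G_2)$ are $\mathrm{Log}((1,0)^2,(0,1)^2)$). The genuine gap is the completeness step, which is the entire content of the theorem: you write that you ``expect'' to rule out exotic integrable forms by decomposing $H^0(X,\Omega^1_X\otimes\mathcal{O}_X(2))$ via the Euler sequences and using Kupka-type arguments on $\mathrm{Sing}(\omega)$, but you never carry this out, and nothing in your sketch indicates how the integrability condition $\omega\wedge d\omega=0$ would actually be exploited. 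The paper's route is quite different and is where all the work lies: it first proves a dichotomy (Proposition \ref{Picard 2-Structural}) --- either some tangential foliation $\mathcal{F}_{\mathrm{tang},i}$ is algebraically integrable, forcing $\mathcal{F}$ algebraically integrable via Lemma \ref{L:Ftang alg int}, or else $\mathcal{F}$ descends to Riccati foliations on the two $\mathbb{P}^1$-bundles $Z_i=\mathbb{P}_{\mathbb{P}^n}\mathcal{K}_i$ coming from the restriction of $\mathcal{F}$ to the fibers of $\pi_1,\pi_2$; the two resulting non-equivalent transversely projective structures force $\mathcal{F}$ to be virtually transversely Euclidean (Lemma \ref{L:2transv aff structures}) and ultimately defined by a closed rational $1$-form without divisorial zeros. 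Each branch is then classified: algebraically integrable implies pencil of hyperplane sections by computing the class of an invariant hypersurface against the fibers (Proposition \ref{algint-picard2}), and closed rational $1$-forms land in one of the two logarithmic components by the residue analysis (Proposition \ref{P: logarithmic on PnxPn}). Without some substitute for this dichotomy your argument does not establish that the list of components is exhaustive.

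The $n=2$ case has the same problem in sharper form. You exhibit the $\mathfrak{aff}(1)$-foliation and assert rigidity from ``uniqueness of such subalgebras up to conjugation,'' but that is not what is needed (and is not quite true as stated: there are several conjugacy classes of two-dimensional subalgebras of $\mathfrak{sl}_3$, and the paper shows most of them produce foliations already lying in the logarithmic components). The paper proves rigidity by checking that the foliation has no non-Kupka singularities and invoking \cite[Theorem A]{PS24}, and proves exhaustiveness of the three components by combining the splitting lemma \cite[Lemma 4.2]{LPT13} with a case-by-case analysis of $\mathbb{C}^*$-actions, $\mathbb{C}$-actions, and two-dimensional subgroup actions of $\mathrm{SL}(3)$ (Propositions \ref{C*-action-prop} and \ref{C-action-prop} and the final proposition of Section \ref{sec5-Picard2}). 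Your proposal contains no argument that a fourth component cannot exist when $n=2$.
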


Throughout this work, we always assume the base field is \( \mathbb{C} \). 

The paper is organized as follows. 
In Section \ref{sec-fol}, we introduce fundamental definitions and results on holomorphic foliations, slope stability for foliations, and a key tool for our work: the tangential foliation. 
In Section \ref{S:deg fol ratcurves}, we define the degree of a foliation with respect to a family of rational curves. 
Section \ref{S: Rat homog var} is devoted to rational homogeneous varieties, with particular emphasis on adjoint varieties. 
Section \ref{sec4-Fano-picard1} studies foliations on Fano varieties with Picard number one, with special attention to adjoint varieties of Picard number one (Section \ref{sec4-picard1}). 
Finally, Section \ref{sec5-Picard2} examines foliations on adjoint varieties with Picard number two.
Accordingly, in Section \ref{sec4-Fano-picard1}, we prove Theorem \ref{mainA} using techniques involving deformations of rational curves along foliations (see Proposition \ref{main}). More generally, we establish a bound on the algebraic rank of a foliation in terms of the algebraic rank of its tangential foliation (see Proposition \ref{P:bound algebraic rank}).
We then prove Theorem \ref{T: adjoind picard one}, with key steps presented in Propositions \ref{P: pencil} and \ref{P: adjoint H^0}.
In Section \ref{sec5-Picard2}, Proposition \ref{P:Picard two} and Theorem \ref{T:Picard two} are proved; see Proposition \ref{deg infinito}, Theorem  \ref{P: logarithmic on PnxPn}, and Section \ref{S: adjoint sl3}.

\section{Foliations on algebraic varieties }\label{sec-fol}

\subsection{Basic concepts}\label{subsec-fol-basicconcepts}
Let $X$ be a smooth complex projective variety.

\begin{definition}\label{Def-fol}
A \emph{foliation} $\mathcal{F}$ on $X$ is defined by a coherent subsheaf $T_{\mathcal{F}}$ of the tangent sheaf $T_X$, called the tangent sheaf of $\mathcal{F}$, such that
\begin{enumerate}
    \item $T_{\mathcal{F}}$ is involutive, that is, it is closed under the Lie bracket, and
    \item $T_{\mathcal{F}}$ is saturated, that is, the quotient $T_X/T_{\mathcal{F}}$ is torsion-free.
\end{enumerate}
More generally, a \emph{distribution} $\mathcal{D}$ on $X$ is defined by a saturated coherent subsheaf $T_{\mathcal{D}} \subset T_X$; that is, it satisfies only the second condition above.
\end{definition}

The locus of points where $T_X/T_{\mathcal{F}}$ is not locally free is called the \emph{singular locus} of $\mathcal{F}$ and is denoted by $\operatorname{Sing}(\mathcal{F})$. Since $T_X/T_{\mathcal{F}}$ is torsion-free, $\operatorname{codim} \operatorname{Sing}(\mathcal{F}) \ge 2$. The \emph{rank} of $\mathcal{F}$, denoted by $r_{\mathcal{F}}$, is the generic rank of $T_{\mathcal{F}}$, and the \emph{codimension} of $\mathcal{F}$ is $n - r_{\mathcal{F}}$.  
The \emph{normal sheaf} of the foliation is the sheaf $\mathcal{N}_{\mathcal{F}} = (T_X/T_{\mathcal{F}})^{\vee \vee}$. The \emph{canonical class} $K_{\mathcal{F}}$ of $\mathcal{F}$ is any divisor on $X$ such that  
\[
\mathcal{O}_X(-K_{\mathcal{F}}) \simeq \det(T_{\mathcal{F}}).
\]  
When $-K_{\mathcal{F}}$ is ample, we define the \emph{index} $\iota_{\mathcal{F}}$ of the foliation $\mathcal{F}$ on $X$ to be the largest integer dividing $-K_{\mathcal{F}}$ in $\operatorname{Pic}(X)$.
Over the smooth locus $X^0 = X \setminus \operatorname{Sing}(\mathcal{F})$, we have the following exact sequence
\[
0 \longrightarrow T_{\mathcal{F}}|_{X^0} \longrightarrow T_{X^0} \longrightarrow \mathcal{N}_{\mathcal{F}}|_{X^0} \longrightarrow 0.
\]

\begin{definition}
Let $\mathcal{F}$ be a foliation on $X$.
An analytic submanifold $L \subset X$ of dimension equal to $r_{\mathcal{F}}$ is called a \emph{leaf} of the foliation $\mathcal{F}$ if the map $T_{\mathcal{F}}|_L \longrightarrow T_X|_L$ factors through $T_L$.
A closed subvariety $Y \subset X$ is said to be \emph{invariant} by $\mathcal{F}$ if it is not contained in the singular locus of $\mathcal{F}$ and the map $T_{\mathcal{F}}|_Y \longrightarrow T_X|_Y$ factors through $T_Y$.
\end{definition}

Let $\mathcal{F}$ be a codimension-$q$ foliation on a smooth projective variety $X$. Such a foliation can be described by a $q$-form $\omega$ with coefficients in the line bundle $\det(\mathcal{N}_{\mathcal{F}}) = \big(\bigwedge^q \mathcal{N}_{\mathcal{F}}\big)^{\vee \vee}$. Specifically, the foliation $\mathcal{F}$ is defined by a global section 
\[
\omega \in H^0\big(X, \Omega_X^q \otimes \det(\mathcal{N}_{\mathcal{F}})\big),
\] 
where, by the Frobenius Theorem, this $p$-form satisfies  the following properties:
\begin{enumerate}
    \item[(a)] \textit{Local Decomposability}: At a general point of $X$, the germ of $\omega$ can be expressed as a wedge product of $q$ holomorphic $1$-forms:
    \[
    \omega = \omega_1 \wedge \cdots \wedge \omega_q.
    \]
    
    \item[(b)] \textit{Integrability}: For a general point of $X$, the decomposition of $\omega$ from (a) satisfies the integrability condition:
    \[
    d\omega_i \wedge \omega = 0 \quad \text{for every } i = 1, \ldots, q.
    \]
\end{enumerate}

In the next definition, we introduce a global invariant of a foliation on a smooth projective variety, namely the \emph{algebraic rank}; see \cite[Definition~2.4]{AD-19} and \cite[Definition~2]{AD17}.

\begin{definition}
Let $\mathcal{F}$ be a foliation on a smooth projective variety $X$. The \emph{algebraic rank} $r_a(\mathcal{F})$ of $\mathcal{F}$ is the maximum dimension of an algebraic subvariety $Z$ passing through a general point of $X$ and tangent to $\mathcal{F}$. That is, for every point $z \in Z \setminus \operatorname{Sing}(\mathcal{F})$, we have $T_z Z \subset T_z \mathcal{F}$.
We say that a foliation $\mathcal{F}$ is \emph{purely transcendental} if its algebraic rank is zero. 
In contrast, a foliation $\mathcal{F}$ on $X$ is said to be \emph{algebraically integrable} if its algebraic rank equals its dimension; that is, the leaf of $\mathcal{F}$ through a general point is an algebraic variety. Equivalently, $\mathcal{F}$ is induced by a dominant rational map with irreducible general fiber, $X \dashrightarrow Y$, where $Y$ is a smooth projective variety (see \cite[Section 2.3]{loray-pereira-touzet:trivial}). When $\mathcal{F}$ has codimension one, such a map is called a \emph{first integral} of $\mathcal{F}$.
\end{definition}

\begin{definition}
Let $\mathcal{F}$ be a foliation on a smooth projective variety $X$, and let $\pi: Y \dashrightarrow X$ be a dominant rational map from a smooth variety $Y$. We say that $\mathcal{G}$ is the pullback of $\mathcal{F}$ via $\pi$ if for smooth open subsets $X^\circ \subset X$ and $Y^\circ \subset Y$ such that $\pi$ restricts to a smooth morphism $\pi^\circ: Y^\circ \longrightarrow X^\circ$, we have  
\[
\mathcal{G}|_{Y^\circ} = (d\pi^\circ)^{-1} (\mathcal{F}|_{X^\circ}).
\]  
In this case, we write $\mathcal{G} = \pi^{*} \mathcal{F}$.
\end{definition}

\subsection{The space of codimension-one foliations on projective varieties}\label{S:components}
Since our work focuses on moduli spaces of codimension-one foliations, we will assume from now on that $\mathcal{F}$ is a codimension-one foliation on a smooth projective variety $X$.
Fix a line bundle $\mathcal{N} \in \operatorname{Pic}(X)$. A codimension-one foliation $\mathcal{F}$ with normal sheaf isomorphic to $\mathcal{N}$ is defined by a $1$-form $\omega \in H^0(X, \Omega^1_X \otimes \mathcal{N})$ satisfying
\[
\omega \wedge d\omega = 0 \quad \text{and} \quad \operatorname{cod} \operatorname{Sing}(\omega) \ge 2,
\]
where $\operatorname{Sing}(\omega) = \operatorname{Sing}(\mathcal{F})$ denotes the set of points in $X$ where the $1$-form $\omega$ vanishes.
We define the space of codimension-one foliations on $X$ with normal bundle $\mathcal{N}$ to be the quasi-projective variety
\[
\operatorname{Fol}(X, \mathcal{N}) = \left\{ [\omega] \in \mathbb{P}H^0(X, \Omega^1_X \otimes \mathcal{N}) \;\middle|\; \omega \wedge d\omega = 0,\; \operatorname{cod} \operatorname{Sing}(\omega) \ge 2 \right\}
\subset \mathbb{P}H^0(X, \Omega^1_X \otimes \mathcal{N}).
\]

Some classical irreducible components of the space of codimension-one foliations include logarithmic foliations, linear pullbacks, and those associated to actions of the $2$-dimensional affine Lie algebra, or more generally, foliations tangent to algebraic actions (see \cite[Sections 4 and 5]{CLP:deg3}). In what follows, we briefly describe each of these components.

\subsubsection{Logarithmic components}\label{S:logarithmic components}
Let $X$ be a simply connected smooth projective variety of dimension $n \ge 3$. Consider a closed rational $1$-form $\omega$ on $X$ without codimension-one zeros. Suppose $\omega$ has simple poles along the support of a simple normal crossing divisor $D := \sum D_i$, where
\[
D = -\operatorname{div}(\omega) = (\omega)_\infty - (\omega)_0,
\]
and each $D_i$ is an irreducible hypersurface. Since $\omega$ is closed, it induces a foliation $\mathcal{F}$ with normal sheaf $\mathcal{O}_X(D)$, defined as the kernel of the map
\[
T_X \longrightarrow \mathcal{O}_X(D)
\]
given by contraction with $\omega$.

Now, let $\lambda_1, \dots, \lambda_k \in \mathbb{C}$ satisfy $\sum \lambda_i c_1(D_i) = 0$. Then there exists a unique closed rational $1$-form $\omega$ with residues along each $D_i$ equal to $\lambda_i$ and such that
\[
D = \sum_{i=1}^{k} D_i = -\operatorname{div}(\omega) = (\omega)_\infty - (\omega)_0,
\]
(see \cite[Theorem 3.1]{Pereira22}).  
Recall that the residue of a $1$-form along a divisor is defined as follows. Let $x \in D_i$ be a smooth point and let $p\colon \overline{\mathbb{D}} \hookrightarrow X$ be an embedding of a closed disc, holomorphic in the interior, intersecting $D_i$ transversely at $x$. Then
\[
\operatorname{Res}_{D_i}(\omega) = \frac{1}{2\pi i} \int_{\partial \mathbb{D}} p^* \omega \in \mathbb{C}.
\]
The closedness of $\omega$ and the irreducibility of $D_i$ ensure that this integral is independent of the choices of $x$ and $p$.
 Consider $\operatorname{Log}(D)$ to be the closure in
\(
\mathbb{P} H^0\big(X, \Omega^1_X \otimes \mathcal{O}_X(D)\big)
\)
of the set of closed $1$-forms $\omega$ satisfying
\(
\sum_{i=1}^{k} D_i = -\operatorname{div}(\omega) = (\omega)_\infty - (\omega)_0.
\)
The elements of $\operatorname{Log}(D)$ are called \emph{logarithmic $1$-forms}. Locally, they can be written as
\[
\omega = \alpha + \sum_{i=1}^{k} g_i \frac{df_i}{f_i},
\]
where $\{f_1 \cdots f_k = 0\}$ is a reduced local equation for the support of $D$, $\alpha$ is holomorphic, and $g_1, \dots, g_k$ are holomorphic functions. 
Under certain assumptions, Calvo-Andrade proved that $\operatorname{Log}(D)$ is an irreducible component of $\operatorname{Fol}(X, N_{\mathcal{F}})$ (see \cite{CalvoAndrade1994IrreducibleCO}). Later, Loray, Pereira, and Touzet extended this result to smooth projective varieties satisfying $H^0(X, \Omega^1_X) = 0$ (see \cite[Lemma 2.5]{LPT13}). These components are called \emph{logarithmic components}.

If $X$ has Picard number one, the construction of these components is rather simple (see \cite[Section 2.5]{LPT13}). Let $D = \sum_{i=1}^k D_i$ be a simple normal crossing divisor. 
Suppose that the divisors $D_i$ have degrees $d_i$ and $\sum_{i=1}^k d_i = d$. We define the rational map  
\[
\Phi\colon \Sigma \times \left( \prod_{i=1}^{k} \mathbb{P} H^0(X, \mathcal{O}_X(d_i)) \right)
\longrightarrow \mathbb{P} H^0 \left( X, \Omega^1_X(d) \right)
\]
\[
\bigl( (\lambda_1 : \cdots : \lambda_k), f_1, \dots, f_k \bigr) \mapsto 
\left( \prod_{i=1}^{k} f_i \right) \left( \sum_{i=1}^{k} \lambda_i \frac{df_i}{f_i} \right),
\]
where $\Sigma \subset \mathbb{P}^{k-1}$ is the hyperplane given by
\(
\left\{ \sum \lambda_i d_i = 0 \right\},
\)
where this equality corresponds to the condition on the Chern class of the residue divisor (see \cite[Proposition 3.2]{CLP:deg3}).
The closure of the image of $\Phi$ is an irreducible component of $\operatorname{Fol}(X, \mathcal{O}_X(d-2))$, denoted by $\operatorname{Log}(d_1, \dots, d_k)$. When $k = 2$, the foliations on these components are algebraically integrable and they are also known as \emph{rational components}.

\subsubsection{Components defined by linear pullbacks}
Let $\mathcal{F}$ be a codimension-one foliation of degree $k$ on $\mathbb{P}^2$, and let 
$\pi\colon \mathbb{P}^n \dashrightarrow \mathbb{P}^2$ be a linear projection with $n \ge 3$. 
Then the pullback foliation $\mathcal{F}^* := \pi^*\mathcal{F}$ is a codimension-one foliation of degree $k$ on $\mathbb{P}^n$.
The space of foliations on $\mathbb{P}^n$ arising as linear pullbacks of degree-$k$ foliations on $\mathbb{P}^2$ forms an irreducible component, denoted by $\operatorname{LPB}(n,k)$.
Such foliations were first studied by Camacho and Lins Neto \cite{Camacho1982}, and later Cerveau, Lins Neto, and Edixhoven \cite{CLNE-pulback} showed that they indeed give rise to irreducible components in the space of foliations on $\mathbb{P}^n$.

\subsubsection{Foliations associated to algebraic actions}\label{E: fol group actions}
Let \( X \) be a smooth projective variety, and let \( G \subset \operatorname{Aut}(X) \) be a connected Lie group with Lie algebra \( \mathfrak{g} \). Assume that the natural action of \( G \) on \( X \) satisfies that \( \dim (G \cdot x) \) is constant outside a subset of codimension at least two. It follows that the normal sheaf of the foliation induced by this action, \( N_{\mathcal{F}} \), is torsion-free. Denote by \( \mathcal{F} \) the foliation induced by this action on \( X \), namely, the foliation whose leaves are the orbits of maximal dimension.  
The differential of the morphism $G \to \operatorname{Aut}(X)$ yields a map
\(
\mathfrak{g} \longrightarrow H^0(X, T_X).
\)
Tensoring with the structure sheaf $\mathcal{O}_X$, we obtain a morphism
\[
\rho : \mathfrak{g} \otimes_{\mathbb{C}} \mathcal{O}_X \longrightarrow T_X.
\]
Then the tangent sheaf of $\mathcal{F}$ is $T_{\mathcal{F}} \cong \operatorname{Im}(\rho)$.

For a fixed variety $X$, one may be interested in determining when a foliation on $X$ is associated with the action of a connected Lie group, and in such cases, when the induced foliation is \emph{rigid}. Here, \emph{rigid} means that the foliation does not admit nontrivial deformations; that is, any deformation of $\mathcal{F}$ is induced by an automorphism of $X$. Note that this construction produces foliations of potentially higher codimension. For a broader perspective on these foliations, see \cite{velazquez2024moduli}.

For codimension-one foliations, we have a positive answer to the above problem in the following cases. When $X = \mathbb{P}^3$, there exists a codimension-one rigid foliation $\mathcal{F}$ of degree two induced by an action of the $2$-dimensional affine Lie algebra $\mathfrak{aff}(\mathbb{C}) \subset \mathfrak{aut}(\mathbb{P}^3)$. Cerveau and Lins Neto \ first described this component, see cite{CLN:components}, and they  also showed that its linear pullback to $\mathbb{P}^n$ defines a component of 
$\operatorname{Fol}(\mathbb{P}^n, \mathcal{O}_{\mathbb{P}^n}(4))$, $n \ge 3$.  
Furthermore, Loray, Pereira, and Touzet \cite[Section 5]{LPT13} studied a rigid foliation induced by the $2$-dimensional affine Lie algebra $\mathfrak{aff}(\mathbb{C})$ on the three-dimensional quadric. In Example \ref{example-aff}, we also describe a rigid foliation arising from the action of $\mathfrak{aff}(\mathbb{C})$ on a smooth hyperplane section of $\mathbb{P}^2 \times \mathbb{P}^2$ under the Segre embedding.

\subsection{Stability of foliations}
Let $X$ be an $n$-dimensional projective variety, and let $\mathcal{A}$ be an ample line bundle on $X$. Let $\mathcal{E}$ be a torsion-free sheaf of rank $r>0$ on $X$. We define the slope of $\mathcal{E}$ with respect to $\mathcal{A}$ as
\[
\mu_{\mathcal{A}}(\mathcal{E}) = \frac{c_1(\mathcal{E}) \cdot \mathcal{A}^{n-1}}{r}.
\]
We say that $\mathcal{E}$ is
\emph{$\mu_{\mathcal{A}}$-stable} if for any nonzero
subsheaf $\mathcal{G} \subsetneq \mathcal{E}$ we have $\mu_{\mathcal{A}}(\mathcal{G}) < \mu_{\mathcal{A}}(\mathcal{E})$, and
\emph{$\mu_{\mathcal{A}}$-semistable} if we have $\mu_{\mathcal{A}}(\mathcal{G}) \le \mu_{\mathcal{A}}(\mathcal{E})$.
Given a torsion-free sheaf $\mathcal{E}$ on $X$, there exists a filtration of $\mathcal{E}$ by subsheaves:
\[
0 = \mathcal{E}_0 \subsetneq \mathcal{E}_1 \subsetneq \cdots \subsetneq \mathcal{E}_k = \mathcal{E},
\]
with $\mu_{\mathcal{A}}$-semistable quotients $\mathcal{Q}_i = \mathcal{E}_i / \mathcal{E}_{i-1}$, such that:
\[
\mu_{\mathcal{A}}(\mathcal{Q}_1) > \mu_{\mathcal{A}}(\mathcal{Q}_2) > \cdots > \mu_{\mathcal{A}}(\mathcal{Q}_k).
\]
This filtration is called the \textit{Harder-Narasimhan filtration} of $\mathcal{E}$. The sheaf $\mathcal{E}_1$, known as the \emph{maximal destabilizing subsheaf} of $\mathcal{E}$, is uniquely determined, semistable, and satisfies the following condition: for all subsheaves $\mathcal{G} \subset \mathcal{E}$, we have  
\[
\mu_{\mathcal{A}}(\mathcal{E}_1) \ge \mu_{\mathcal{A}}(\mathcal{G}),
\]
and in case of equality, $\mathcal{E}_1 \supset \mathcal{G}$ (see for instance \cite[Definition 1.3.6]{Huybrechts_Lehn_2010}). 
When $\mu_{\mathcal{A}}(T_X) > 0$, the sheaf $\mathcal{E}_1 \subsetneq T_X$ defines a foliation on $X$ as shown in the following:

\begin{lemma}[{\cite[Lemma 7.2]{AD-fano}}] \label{Lemma 2.5}
Let $X$ be a smooth projective variety, $\mathcal{A}$ an ample line bundle on $X$, and $\mathcal{F} \subseteq T_X$ a foliation on $X$. Let $0 = {\mathcal{F}_0} \subset {\mathcal{F}_1} \subset \cdots \subset {\mathcal{F}_k} = {\mathcal{F}}$ be the Harder-Narasimhan filtration of ${\mathcal{F}}$ with respect to $\mathcal{A}$. Then ${\mathcal{F}_i} \subseteq T_X$ defines a foliation on $X$ for every $i$ such that $\mu_{\mathcal{A}}(T_{\mathcal{F}_i}) > 0$.
\end{lemma}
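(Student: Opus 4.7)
The plan is to verify separately the two defining conditions of a foliation — saturatedness in $T_X$ and involutivity — for each Harder–Narasimhan step $\mathcal{F}_i$ with positive $\mathcal{A}$-slope. Saturatedness is essentially formal: each step $\mathcal{F}_i$ is automatically saturated in $\mathcal{F}$ (otherwise its saturation would have at least the same slope and strictly larger rank, contradicting the construction of the HN flag), and $\mathcal{F}$ is saturated in $T_X$ since $\mathcal{F}$ is a foliation; the exact sequence $0 \to \mathcal{F}/\mathcal{F}_i \to T_X/\mathcal{F}_i \to T_X/\mathcal{F} \to 0$ then exhibits $T_X/\mathcal{F}_i$ as an extension of torsion-free sheaves, hence torsion-free.

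The heart of the proof is involutivity. Because $\mathcal{F}$ is involutive, the Lie bracket restricted to $\mathcal{F}_i$ lands in $\mathcal{F}$, so composing with the projection onto $\mathcal{F}/\mathcal{F}_i$ yields a map $\mathcal{F}_i \otimes \mathcal{F}_i \to \mathcal{F}/\mathcal{F}_i$. The Leibniz identity $[fv,w] = f[v,w] - w(f)\,v$ together with $v \in \mathcal{F}_i$ shows that the derivation term lies in $\mathcal{F}_i$, so the map is $\mathcal{O}_X$-bilinear modulo $\mathcal{F}_i$; by antisymmetry it descends to a morphism of coherent sheaves
\[
\psi_i : \bigwedge^2 \mathcal{F}_i \longrightarrow \mathcal{F}/\mathcal{F}_i,
\]
and $[\mathcal{F}_i,\mathcal{F}_i]\subseteq \mathcal{F}_i$ is equivalent to $\psi_i = 0$.

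To kill $\psi_i$ I would invoke the standard slope principle: if $\mu_{\min,\mathcal{A}}(A) > \mu_{\max,\mathcal{A}}(B)$ then any morphism $A \to B$ between torsion-free sheaves vanishes. The HN filtration of $\mathcal{F}/\mathcal{F}_i$ is the quotient filtration $0 \subset \mathcal{F}_{i+1}/\mathcal{F}_i \subset \cdots \subset \mathcal{F}/\mathcal{F}_i$, so $\mu_{\max,\mathcal{A}}(\mathcal{F}/\mathcal{F}_i) = \mu_{\mathcal{A}}(\mathcal{F}_{i+1}/\mathcal{F}_i)$. For the source, the tensor product theorem in characteristic zero (Maruyama, Ramanan–Ramanathan) gives $\mu_{\min,\mathcal{A}}(\mathcal{F}_i \otimes \mathcal{F}_i) = 2\mu_{\min,\mathcal{A}}(\mathcal{F}_i) = 2\mu_{\mathcal{A}}(\mathcal{F}_i/\mathcal{F}_{i-1})$, using that the HN filtration of $\mathcal{F}_i$ is the induced flag $\mathcal{F}_1 \subset \cdots \subset \mathcal{F}_i$; since $\bigwedge^2 \mathcal{F}_i$ is a direct summand of $\mathcal{F}_i \otimes \mathcal{F}_i$ in characteristic zero, the same lower bound holds for its $\mu_{\min}$. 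The positivity hypothesis ensures $\mu_{\mathcal{A}}(\mathcal{F}_i/\mathcal{F}_{i-1}) > 0$, and combined with the strict HN decrease one obtains
\[
\mu_{\min,\mathcal{A}}\!\left(\bigwedge^2 \mathcal{F}_i\right) \;\ge\; 2\mu_{\mathcal{A}}(\mathcal{F}_i/\mathcal{F}_{i-1}) \;>\; \mu_{\mathcal{A}}(\mathcal{F}_i/\mathcal{F}_{i-1}) \;>\; \mu_{\mathcal{A}}(\mathcal{F}_{i+1}/\mathcal{F}_i) \;=\; \mu_{\max,\mathcal{A}}(\mathcal{F}/\mathcal{F}_i),
\]
which forces $\psi_i = 0$ and therefore $[\mathcal{F}_i,\mathcal{F}_i]\subseteq \mathcal{F}_i$.

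The main delicate point I expect is the slope estimate on $\bigwedge^2 \mathcal{F}_i$: if $\mathcal{F}_i$ were itself semistable the bound $\mu_{\min}(\bigwedge^2\mathcal{F}_i) \ge 2\mu(\mathcal{F}_i)$ would be immediate, but for a general HN step one must combine the preservation of semistability under tensor operations in characteristic zero with the induced HN flag of $\mathcal{F}_i$ to obtain the sharp bound $2\mu_{\min,\mathcal{A}}(\mathcal{F}_i)$. After that, the slope chase is mechanical and, together with saturatedness, completes the verification that $\mathcal{F}_i$ is a foliation.
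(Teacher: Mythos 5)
Your overall strategy---saturation handled formally, involutivity killed by exhibiting the bracket-induced $\mathcal{O}_X$-bilinear map into $\mathcal{F}/\mathcal{F}_i$ and making it vanish by a slope comparison via the tensor product theorem in characteristic zero---is exactly the standard proof of this lemma; the paper itself gives no proof and simply cites \cite[Lemma 7.2]{AD-fano}, whose argument is the one you describe. But there is one genuine gap, and it sits precisely where your argument meets the hypothesis as printed. You assert that ``the positivity hypothesis ensures $\mu_{\mathcal{A}}(\mathcal{F}_i/\mathcal{F}_{i-1})>0$.'' This is false: $\mu_{\mathcal{A}}(\mathcal{F}_i)$ is the rank-weighted average of the strictly decreasing slopes $\mu_{\mathcal{A}}(\mathcal{F}_1)>\mu_{\mathcal{A}}(\mathcal{F}_2/\mathcal{F}_1)>\cdots>\mu_{\mathcal{A}}(\mathcal{F}_i/\mathcal{F}_{i-1})$, and positivity of an average does not force positivity of its smallest term (two rank-one graded pieces of slopes $10$ and $-1$ give $\mu_{\mathcal{A}}(\mathcal{F}_2)=9/2>0$ while $\mu_{\mathcal{A}}(\mathcal{F}_2/\mathcal{F}_1)=-1$). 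When $\mu_{\mathcal{A}}(\mathcal{F}_i/\mathcal{F}_{i-1})\le 0$ your chain of inequalities breaks at its first link: $2\mu_{\min,\mathcal{A}}(\mathcal{F}_i)$ is then $\le\mu_{\min,\mathcal{A}}(\mathcal{F}_i)$ and nothing prevents it from dropping below $\mu_{\mathcal{A}}(\mathcal{F}_{i+1}/\mathcal{F}_i)=\mu_{\max,\mathcal{A}}(\mathcal{F}/\mathcal{F}_i)$, so the vanishing criterion $\mu_{\min}(\mathrm{source})>\mu_{\max}(\mathrm{target})$ is simply unavailable. What your argument actually proves is the lemma under the hypothesis $\mu_{\mathcal{A}}(\mathcal{F}_i/\mathcal{F}_{i-1})>0$, i.e.\ $\mu_{\min,\mathcal{A}}(\mathcal{F}_i)>0$; this is the condition $i\le i_0$ appearing in the paragraph just before the lemma, and it is the form in which the result is used in the paper. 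You must either prove the implication you claim (you cannot; it is false) or work with the minimal-slope hypothesis from the start.

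Two smaller points, neither of which affects the substance. Your justification of saturatedness of $\mathcal{F}_i$ in $\mathcal{F}$ is garbled: saturation preserves rank, so ``strictly larger rank'' gives no contradiction; the correct reason is that the saturation has the same rank and slope at least as large, and $\mathcal{F}_i/\mathcal{F}_{i-1}$ is the maximal destabilizing subsheaf of $\mathcal{F}/\mathcal{F}_{i-1}$, hence saturated by maximality among subsheaves of maximal slope. Also, the detour through $\bigwedge^2\mathcal{F}_i$ is unnecessary: the $\mathcal{O}_X$-bilinear map $\mathcal{F}_i\otimes\mathcal{F}_i\to\mathcal{F}/\mathcal{F}_i$ already suffices, and $\mu_{\min,\mathcal{A}}(\mathcal{F}_i\otimes\mathcal{F}_i)=2\mu_{\min,\mathcal{A}}(\mathcal{F}_i)$ (computed modulo torsion) is all you need, avoiding any discussion of direct summands of tensor powers of a sheaf that is only torsion-free.
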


We say that a foliation $\mathcal{F}$ is $\mu_{\mathcal{A}}$-stable (or $\mu_{\mathcal{A}}$-semistable) if its tangent sheaf $T_{\mathcal{F}}$ is $\mu_{\mathcal{A}}$-stable (or $\mu_{\mathcal{A}}$-semistable, respectively). As a consequence of Lemma \ref{Lemma 2.5}, we have the following  
\begin{corollary}[{\cite[Proposition 2.2]{LPT13}}]
Let $\mathcal{F}$ be a foliation on a polarized smooth projective variety $(X, \mathcal{A})$ satisfying $\mu_{\mathcal{A}}(T_{\mathcal{F}}) \ge 0$. If $T_{\mathcal{F}}$ is not semistable, then the maximal destabilizing subsheaf $\mathcal{G}$ of $T_{\mathcal{F}}$ is involutive. Thus, $\mathcal{G}$ is a semistable foliation on $X$ tangent to $\mathcal{F}$ and satisfying
\[
\mu_{\mathcal{A}}(T_{\mathcal{G}}) > \mu_{\mathcal{A}}(T_{\mathcal{F}}).
\]
\end{corollary}

\subsection{Transverse structures for codimension-one foliations}\label{S: tansversality of fol}
In this section we introduce the concept of transversality of foliations; for more details, the reader can check \cite[Section 2]{LPT-def} and the references therein. Such structures form a chaining for foliations, descending to algebraically integrable foliations. Let $X$ be a smooth complex projective variety. Recall that a Riccati foliation $\mathcal{H}$ on a $\mathbb{P}^1$-bundle $\pi \colon P \longrightarrow X$ is a foliation on $P$ that has no tangencies with the general fiber of $\pi$.

Let $\mathcal{F}$ be a codimension-one foliation on $X$ and consider $\omega$ a rational $1$-form defining $\mathcal{F}$.
We say that $\mathcal{F}$ admits a \textit{transversely projective structure} if there exist a $\mathbb{P}^1$-bundle $\pi \colon P \longrightarrow X$, a Riccati foliation $\mathcal{H}$ on $P$, and a meromorphic section $\sigma \colon X \dashrightarrow P$ such that $\sigma$ is generically transverse to $\mathcal{H}$ and the restriction of $\mathcal{H}$ to $\sigma(X)$ corresponds to $\mathcal{F}$ via $\sigma$. We then say that $(P, \mathcal{H}, \sigma)$ is a transversely projective structure for $\mathcal{F}$.  

Two such triples $\mathcal{P} = (P, \mathcal{H}, \sigma)$ and $\mathcal{P}' = (P', \mathcal{H}', \sigma')$ are said to be equivalent when they are conjugate by a birational bundle transformation $\phi \colon P \dashrightarrow P'$ satisfying $\phi^* \mathcal{H}' = \mathcal{H}$ and $\phi \circ \sigma = \sigma'$. According to \cite{CLLPT04}, the existence of a transversely projective structure is equivalent to the existence of nonzero rational $1$-forms $\omega_1$ and $\omega_2$ on $X$ such that
\[
\begin{cases}
    d\omega = \omega \wedge \omega_1, \\
    d\omega_1 = \omega \wedge \omega_2, \\
    d\omega_2 = \omega_1 \wedge \omega_2.
\end{cases}
\]
In the next example, we show that a Riccati foliation is transversely projective.

\begin{example}
Let $\mathcal{F}$ be a Riccati foliation on $P$, a $\mathbb{P}^1$-bundle over $X$. Then $\mathcal{F}$ is defined, in a trivialization $\mathcal{U} \times \mathbb{P}^1$ for $\mathcal{U}$ an affine open subset of $X$, by an integrable $1$-form  
\[
\Omega = dz + \omega_0 + z\omega_1 + \frac{z^2}{2} \omega_2,
\]  
where $\omega_0, \omega_1, \omega_2$ are pullbacks of rational forms on $\mathcal{U}$.
Consider the vector field $v = \frac{\partial}{\partial z}$. Then $v$ satisfies $\Omega(v) = 1$, and the third Lie derivative $\mathcal{L}_v^{(3)}\Omega$ of $\Omega$ along $v$ is zero. Thus, the foliation $\mathcal{F}$ is transversely projective; see \cite{CACGL04}. For an alternative proof, see also \cite[Section 3]{loray2007transversely}.
Note that the integrability of $\Omega$ implies the integrability of $\omega_0$ and ensures that $(\omega_0, \omega_1, \omega_2)$ defines a transversely projective structure for the foliation on $\mathcal{U}$ defined by $\omega_0$.
\end{example}
 
We say that $\mathcal{F}$ admits a \textit{transversely affine structure} if it admits a transversely projective structure $(P, \mathcal{H}, \sigma)$ as above and the section $\sigma \colon X \longrightarrow P$ is invariant by $\mathcal{H}$. Equivalently, $\mathcal{F}$ is transversely affine if there exists a nonzero rational $1$-form $\omega_1$ on $X$ such that
\[
\begin{cases}
    d\omega = \omega \wedge \omega_1, \\
    d\omega_1 = 0.
\end{cases}
\]
In general, a Riccati foliation  $\mathcal{F}$  on a $\mathbb{P}^1$-bundle $\pi\colon P \longrightarrow X$ over $X$ does not necessarily admit a transversely affine structure (see \cite[Example 2.18]{CLLPT04}). It does so if and only if there exists a hypersurface $H \subset P$ that is invariant under $\mathcal{F}$ and dominates $X$ (i.e., $\pi(H) = X$); see \cite[Example 2.11]{CP14} and \cite[Remark 2.17]{CLLPT04}.

If a foliation $\mathcal{F}$ admits a transversely affine structure such that the associated $1$-form $\omega_1$ is logarithmic and all its periods are commensurable with $2\pi\sqrt{-1}$ (i.e., they belong to $2\pi\sqrt{-1} \mathbb{Q}$), then $\mathcal{F}$ is said to be \emph{virtually transversely Euclidean}. Finally, we say that $\mathcal{F}$ admits a \textit{transversely Euclidean structure} if $\omega_1$ is logarithmic with all its periods being integral multiples of $2\pi\sqrt{-1}$. In this latter case, such foliations are defined by closed rational $1$-forms. 
In \cite[Example 2.8]{CP14} and \cite[Example 3.2]{LPT-def}, the authors provide examples of foliations on $\mathbb{P}^1 \times \mathbb{P}^1$ that are transversely affine but not defined by a closed rational $1$-form; see also \cite[Theorem D]{LPT16-representations} and \cite[Theorem A]{CP14}.

These transversal structures form a type of hierarchy for the algebraic integrability of the foliation $\mathcal{F}$. More precisely, transversely projective foliations admit a canonical collection of local holomorphic first integrals defined on the complement of the polar set of $\omega_0, \omega_1, \omega_2$. A transversely affine structure for $\mathcal{F}$ determines a local system of first integrals given by the branches of the multi-valued function
\[
F = \int \left( \exp \int \omega_1 \right) \omega_0.
\]
Finally, if the foliation $\mathcal{F}$ admits a transversely Euclidean structure, then $\exp \int \omega_1$ is a rational function and $\left( \exp \int \omega_1 \right) \omega_0$ is a closed rational $1$-form defining $\mathcal{F}$.

These structures behave nicely with respect to dominant rational maps, as illustrated by the following lemma.

\begin{lemma}[{\cite{Casale02}, Lemma 2.1 and Lemma 3.1}]\label{L: pullback tansv}
Let $X$ and $Y$ be smooth projective varieties, $\pi\colon X \dashrightarrow Y$ a dominant rational map, and $\mathcal{F}$ a codimension-one foliation on $Y$. The foliation $\pi^* \mathcal{F}$ is algebraically integrable, transversely projective, transversely affine, or virtually transversely Euclidean if and only if $\mathcal{F}$ is.
\end{lemma}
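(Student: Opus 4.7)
The statement has two implications. The easy one is that transverse structures pass to pullbacks; the substantive one is that they descend. I would organize the proof accordingly and split the second direction via Stein factorization.

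For the direction $(\Rightarrow)$, suppose $\mathcal{F}$ on $Y$ is defined by a rational $1$-form $\omega$ and carries one of the three structures, so that nonzero rational forms $\omega_1,\omega_2$ satisfy the corresponding system on $Y$. Since $\pi\colon X\dashrightarrow Y$ is dominant, the pulled-back forms $\pi^{*}\omega,\pi^{*}\omega_1,\pi^{*}\omega_2$ are nonzero rational $1$-forms on $X$, and the identities $\pi^{*}d=d\pi^{*}$ and $\pi^{*}(\alpha\wedge\beta)=\pi^{*}\alpha\wedge\pi^{*}\beta$ show that the projective, affine, or Euclidean system is preserved verbatim. For the virtually Euclidean case I would note additionally that $\pi$ induces a map on fundamental groups on a Zariski-open where it is smooth, and the periods of $\pi^{*}\omega_1$ are integer combinations of those of $\omega_1$; in particular the commensurability/integrality with $2\pi\sqrt{-1}$ is preserved.

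For the direction $(\Leftarrow)$, assume $\pi^{*}\mathcal{F}$ admits a transversely projective structure. The natural object to descend is the triple $(P,\mathcal{H},\sigma)$, so the plan is to produce a triple on $Y$ from one on $X$. I would first reduce by Stein factorization $\pi=h\circ g$, where $g\colon X\dashrightarrow Z$ has connected fibers and $h\colon Z\dashrightarrow Y$ is generically finite. For $g$ (connected fibers): since $\pi^{*}\mathcal{F}$ contains the fibers of $g$ inside its leaves, one can use the gauge freedom of the projective structure (multiplying $\omega$ by $f$ shifts $\omega_1\mapsto \omega_1-df/f$, and a similar prescribed transformation on $\omega_2$) to arrange that the triple is constant along the fibers of $g$; a rational $1$-form constant along the fibers of a dominant map with connected fibers is a pullback, so the structure descends to $Z$. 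For $h$ (generically finite): replace $Z$ by the Galois closure $\widetilde{Z}\to Y$ with group $G$, pull everything back there, and then use uniqueness of the transversely projective structure associated with $\mathcal{F}$ up to gauge equivalence to average/symmetrize the triple, producing a $G$-invariant triple that descends to $Y$. The affine and (virtually) Euclidean cases follow from the projective case by tracking the additional data—a $\mathcal{H}$-invariant section in the affine case, and the rationality/integrality of periods of $\omega_1$ in the Euclidean case—which are preserved and reflected by the same descent, because being a pullback of a rational function, or having periods in a given lattice, is detected on $X$.

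The main obstacle is the gauge ambiguity in the triple $(P,\mathcal{H},\sigma)$: the triple on $X$ is not literally the pullback of a triple on $Y$, only equivalent to one under birational bundle transformations. The crux is therefore to show that this equivalence class is canonical enough to admit descent. In the generically finite step this requires showing that for each $g\in G$, the pulled-back triple $g^{*}(P,\mathcal{H},\sigma)$ is equivalent to $(P,\mathcal{H},\sigma)$, and then choosing cocycle-compatible conjugating transformations so that the $G$-action glues; this is a $1$-cocycle computation in the birational automorphism group of $\mathbb{P}^1$-bundles over $\widetilde{Z}$ modulo equivalence, and vanishing of the relevant $H^1$ is what makes the descent work. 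Once this is set up, the three transverse structures are distinguished by the structural group ($\mathrm{PGL}_2$, the affine group, or its Euclidean subgroup), and the same descent argument applies uniformly.
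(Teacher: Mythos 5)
The paper offers no proof of this lemma: it is imported verbatim from Casale (Lemmas 2.1 and 3.1 of the cited reference), so there is nothing internal to compare your argument against. Judged on its own terms, your outline has the right architecture — the pullback direction is formal, the descent direction is the real content, and reducing via Stein factorization to a connected-fibers part plus a generically finite part is the standard move — but both halves of your descent rest on steps that are asserted rather than proved, and at least one of them fails as stated.

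For the connected-fibers map $g\colon X\dashrightarrow Z$, normalize $\omega=g^{*}\omega_Z$ and contract the relation $d\omega=\omega\wedge\omega_1$ with a vector field $v$ tangent to the fibers: since $i_v\omega=0$ and $i_v\,d\omega=0$, you get $(i_v\omega_1)\,\omega=0$, hence $\omega_1$ vanishes on a generic fiber $F$. Then $d\omega_2=\omega_1\wedge\omega_2$ restricted to $F$ gives that $\omega_2|_F$ is a \emph{closed} rational $1$-form on $F$ — but not necessarily exact. The gauge transformations fixing $\omega$ send $\omega_2$ to $\omega_2+2\,du+2u\,\omega_1+u^2\omega$ for a rational function $u$, and since $\omega|_F$ and $\omega_1|_F$ vanish, they modify $\omega_2|_F$ only by the exact form $2\,d(u|_F)$. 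So your claim that "one can use the gauge freedom to arrange that the triple is constant along the fibers" breaks down exactly when $\omega_2|_F$ has nontrivial residues or periods; dealing with that case is the actual content of Casale's lemma and needs a further idea (in the literature one shows that this obstruction forces a stronger transverse structure, or one argues through the monodromy of the associated Riccati equation). In the generically finite step, you invoke "uniqueness of the transversely projective structure up to gauge equivalence" to symmetrize over the Galois group, but such structures are \emph{not} unique in general — the paper's Lemma \ref{L:2transv aff structures} exists precisely because a foliation can carry two non-equivalent ones — so the averaging must be justified differently. For transversely affine structures one can literally average the forms $\sigma^{*}\omega_1$ once $\omega$ is made Galois-invariant, because the defining relations are linear in $\omega_1$; for projective triples the relation $d\omega_2=\omega_1\wedge\omega_2$ is quadratic and a naive average of triples is not a triple. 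As written, these two points are genuine gaps rather than omitted routine details.
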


Note that by pulling back the $1$-forms $\omega_i$ defining the corresponding structure via $\pi$, we conclude that if $\mathcal{F}$ is transversely projective, transversely affine, virtually transversely Euclidean, or transversely Euclidean, then the same holds for $\mathcal{G}$.
Although we do not have a version of Lemma \ref{L: pullback tansv} for transversely Euclidean foliations in general (see \cite[Example 1.5.5]{Kuster2025}), in certain contexts and under additional hypotheses we will see that such a result holds for transversely Euclidean foliations (see Lemma \ref{L:pullback transv euclidean}).  

The existence of multiple non-equivalent transverse structures on a foliation $\mathcal{F}$ imposes strong constraints on $\mathcal{F}$, as shown in the lemma below.

\begin{lemma}[{\cite[Lemma 2.2]{LPT-def}}]\label{L:2transv aff structures}
Let $\mathcal{F}$ be a codimension-one foliation on a smooth projective variety. The following assertions hold true.
\begin{enumerate}
    \item If $\mathcal{F}$ admits two non-equivalent transversely projective structures, then $\mathcal{F}$ admits a virtually transversely Euclidean structure.
    \item If $\mathcal{F}$ admits two non-equivalent transversely affine structures, then $\mathcal{F}$ is defined by a closed rational $1$-form, i.e., $\mathcal{F}$ admits a transversely Euclidean structure.
    \item If $\mathcal{F}$ admits two non-equivalent virtually transversely Euclidean structures, then $\mathcal{F}$ admits a rational first integral.
\end{enumerate}
\end{lemma}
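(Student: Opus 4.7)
\emph{Proof plan.} All three parts follow a common ``differencing'' principle: from the gap between two non-equivalent structures one extracts a rational function which, via the structural equations, produces an auxiliary 1-form upgrading the structure by one level of rigidity.

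\emph{Setup.} Fix a rational 1-form $\omega$ defining $\mathcal{F}$. Using the standard gauge action on a transverse structure one may assume, in each assertion, that both competing structures are written with the same $\omega$ in position $\omega_0$. The leading structural equation $d\omega = \omega\wedge\omega_1 = \omega\wedge\omega_1'$ then forces
\[
\omega_1' - \omega_1 = h\,\omega
\]
for some rational $h$, which is non-zero by the non-equivalence hypothesis.

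\emph{Part (2).} From $d\omega_1 = d\omega_1' = 0$ and the above, $d(h\omega) = 0$ becomes $(dh - h\omega_1)\wedge\omega = 0$, hence $dh - h\omega_1 = k\omega$ for some rational $k$. I then take $\eta := \omega/h$ and verify directly
\[
d\eta = \frac{d\omega}{h} - \frac{\omega\wedge dh}{h^{2}} = \frac{\omega\wedge\omega_1}{h} - \frac{\omega\wedge(h\omega_1 + k\omega)}{h^{2}} = 0.
\]
Since $\eta$ is a rational 1-form defining $\mathcal{F}$ and is closed, this is exactly a transversely Euclidean structure, as required.

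\emph{Part (1).} I repeat the procedure starting from $(\omega,\omega_1,\omega_2)$ and $(\omega,\omega_1',\omega_2')$. The first two structural equations give $\omega_1' = \omega_1 + h\omega$ and, after an analogous contraction by $\omega$, $\omega_2' = \omega_2 + h\omega_1 - dh + k\omega$ for rational $h,k$. Substituting these into $d\omega_2' = \omega_1'\wedge\omega_2'$ and cancelling the terms from $d\omega_2 = \omega_1\wedge\omega_2$ yields, after routine simplification, the compatibility relation
\[
(dk + 2k\omega_1 - h^{2}\omega_1 + h\,dh)\wedge\omega = 0.
\]
I then use this relation to solve for a rational function $\lambda$ (polynomial in $h$, $k$ and their logarithmic derivatives) such that the 1-form $\omega_1^{\mathrm{new}} := \omega_1 + \lambda\omega$ is closed and can be expressed as a $\mathbb{Z}$-linear combination of $dh/h$, $dk/k$ and a holomorphic piece. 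Such a form is logarithmic with residues in $\mathbb{Z}$, so its periods lie in $2\pi i\,\mathbb{Z}$. The pair $(\omega,\omega_1^{\mathrm{new}})$ is therefore a virtually transversely Euclidean structure for $\mathcal{F}$.

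\emph{Part (3).} Two virtually transversely Euclidean structures are, in particular, two transversely affine structures, and their non-equivalence is preserved at the affine level. Applying part (2), each yields a closed rational 1-form defining $\mathcal{F}$; call them $\eta$ and $\eta'$. Since both define the same foliation, $\eta' = f\eta$ for some rational function $f$. Differentiating, $0 = d\eta' = df\wedge\eta + f\,d\eta = df\wedge\eta$, so $f$ is constant on the leaves of $\mathcal{F}$, i.e.\ a rational first integral. The non-equivalence rules out $f$ being a constant.

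\emph{Main obstacle.} The delicate step is part (1): one must upgrade the closed auxiliary form not merely to an affine structure but to a \emph{virtually} Euclidean one, i.e.\ show that the closed 1-form produced is logarithmic with residues in $\mathbb{Q}$. Geometrically, this reflects the fact that two non-equivalent projective monodromies sharing the same foliation of $\mathbb{P}^{1}$-leaves must share common fixed points, which forces the joint monodromy group into a solvable subgroup of $\mathrm{PSL}(2,\mathbb{C})$ with rational multiplier; algebraically, one has to verify that the $\lambda$ extracted above really is built out of logarithmic derivatives of rational functions rather than arbitrary closed forms. Parts (2) and (3) are then essentially formal once the difference $h\omega$ has been identified.
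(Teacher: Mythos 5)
The paper does not actually prove this statement; it is imported verbatim as \cite[Lemma 2.2]{LPT-def}, so your proposal can only be judged on its own terms. The differencing setup (normalize both structures to the same $\omega$, write $\omega_1'-\omega_1=h\omega$) is the right starting point, but two of the three parts have genuine problems. In part (2) your computation of $d(\omega/h)$ has a sign error: the Leibniz rule gives $d(\omega/h)=\tfrac{d\omega}{h}+\tfrac{\omega\wedge dh}{h^{2}}$, not a minus sign, and substituting $dh=h\omega_1+k\omega$ yields $d(\omega/h)=\tfrac{2}{h}\,\omega\wedge\omega_1$, which vanishes only when $\omega$ was already closed; so $\omega/h$ is not the right form. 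The statement is nevertheless true and easier than you make it: $h\omega=\omega_1'-\omega_1$ is itself closed (being the difference of two closed forms) and defines $\mathcal{F}$, which finishes part (2) in one line. Part (3) as written does not go through: a \emph{single} virtually transversely Euclidean structure does not produce a closed rational $1$-form defining $\mathcal{F}$ (that would already be the transversely Euclidean case), so the sentence ``applying part (2), each yields a closed rational $1$-form'' is unsupported --- part (2) applied to the pair yields only one such form. The standard repair is to observe that $h\omega=\omega_1'-\omega_1$ is a difference of two logarithmic forms with periods commensurable to $2\pi\sqrt{-1}$, hence is itself logarithmic with periods in $\mathbb{Q}\cdot 2\pi\sqrt{-1}$; clearing a common denominator $N$, the form $Nh\omega$ has periods in $2\pi\sqrt{-1}\,\mathbb{Z}$, so $F=\exp\!\int Nh\omega$ is a well-defined rational function with $dF/F=Nh\omega$, and $dF\wedge\omega=0$ exhibits $F$ as a nonconstant rational first integral.

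Part (1) is where the substance of the lemma lies, and your proposal does not prove it. After the (correct) normalizations $\omega_1'=\omega_1+h\omega$ and $\omega_2'=\omega_2+h\omega_1-dh+k\omega$, you assert the existence of a rational $\lambda$ such that $\omega_1+\lambda\omega$ is closed, logarithmic, and expressible as an integral combination of $dh/h$, $dk/k$ and a holomorphic piece. Nothing in the displayed compatibility relation, which is only an identity modulo $\omega$, produces such a $\lambda$; the closedness and, above all, the rationality of the residues are precisely the content of the assertion, not a ``routine simplification.'' You correctly identify this as the main obstacle, but identifying a gap is not the same as closing it. The argument in the cited source does not proceed by these coordinate manipulations: it compares the two triples $(P,\mathcal{H},\sigma)$ through a birational bundle transformation and exploits the structure of algebraic subgroups of $\mathrm{PGL}_2(\mathbb{C})$ preserving a Riccati foliation together with extra invariant data, which is where the commensurability of periods genuinely comes from. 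In summary: part (2) is correct after a one-line fix, while parts (1) and (3) contain real gaps.
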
\subsection{Rational curves and foliations}\label{Tangential foliation} \label{ratcurves-ftang}
Let $X$ be a smooth complex projective variety of dimension $n$. 
A \emph{family of rational curves} $H$ on $X$ is an irreducible component of the scheme $\operatorname{RatCurves}^n(X)$ parametrizing rational curves on $X$.  
We refer to \cite{kollar:rational-on-algebraic} for general results on $\operatorname{RatCurves}^n(X)$ and background on rational curves.  
Denote by $\operatorname{Locus}(H)$ the subset of $X$ swept out by curves parametrized by $H$.  
The family $H$ is said to be \emph{dominating} if $\overline{\operatorname{Locus}(H)} = X$.  
We say that $H$ is \emph{unsplit} if it is proper, and \emph{minimal} if, for a general point $x \in \operatorname{Locus}(H)$, the subscheme $H_x \subset H$ parametrizing curves passing through $x$ is proper.

A smooth projective variety $X$ is called \emph{uniruled} if there is a dominant rational map $Y \times \mathbb{P}^1 \dashrightarrow X$, where $Y$ is a variety of dimension $\dim(X) - 1$. A variety is uniruled if and only if it is covered by rational curves.
Moreover, every uniruled variety admits a minimal dominating family of rational curves (see \cite[Theorem 1.1]{Hwang01}).
From now on, we assume that $X$ is uniruled.
Let $C$ be a rational curve parametrized by a morphism $f\colon \mathbb{P}^1 \longrightarrow X$. We denote by $[C]$ or $[f]$ any point in $H$ corresponding to $C$. By the Birkhoff–Grothendieck theorem, the pullback of the tangent bundle of $X$ by $f$ splits as a direct sum:
\[
f^*T_X \cong \mathcal{O}_{\mathbb{P}^1}(a_1) \oplus \cdots \oplus \mathcal{O}_{\mathbb{P}^1}(a_n),
\]
for some integers $a_1 \ge \cdots \ge a_n$. We say that a rational curve $[f]$ is \emph{free} if all the integers $a_1, \dots, a_n$ are nonnegative. 
Now, let $[f] \in H$ be a general member of a minimal dominating family of rational curves $H$. Then, by \cite[IV 2.9]{kollar:rational-on-algebraic}, we have:
\[
f^*T_X \cong \mathcal{O}_{\mathbb{P}^1}(2) \oplus \mathcal{O}_{\mathbb{P}^1}(1)^{\oplus p} \oplus \mathcal{O}_{\mathbb{P}^1}^{\oplus n-p-1},
\]
where $p = \deg(f^*T_X) - 2 \ge 0$.
\begin{remark}\label{Fano}
A smooth complex projective uniruled variety with Picard number one is Fano.
Indeed, uniruledness implies that \( K_X \) is not nef \cite[Theorem~1]{MM86}; if in addition \( \rho(X) = 1 \), then \( -K_X \) must be ample.
\end{remark}
\subsubsection{Rationally connected quotients}\label{ratconnected}
We now introduce a concept from the theory of rational curves that will help analyze the behavior of a foliation on a smooth projective uniruled variety $X$ with respect to families of rational curves on $X$; see, for instance, Lemma \ref{L:ratconneted} and Proposition \ref{P:deg-maior-que-0}.  
Let $H$ be a family of rational curves on $X$, and let $\overline{H}$ denote the closure of the image of $H$ in $\operatorname{Chow}(X)$. We define an equivalence relation on $X$ as follows. Two points $x, y \in X$ are said to be \emph{$H$-equivalent} if there exists a chain of $1$-cycles from $\overline{H}$ connecting them. By \cite[IV, 4.16]{kollar:rational-on-algebraic}, there exists a proper, surjective, equidimensional morphism
\[
\pi_0 \colon X_0 \longrightarrow T_0
\]
from a dense open subset $X_0 \subset X$ onto a normal variety $T_0$, whose fibers are precisely the $H$-equivalence classes. We call this map the \emph{$H$-rationally connected quotient of $X$}. When $T_0$ is a point, we say that $X$ is \emph{$H$-rationally connected}.  

\begin{remark}\label{R:rat connected}
Note that if $X$ has Picard number one and $H$ is a dominating family of rational curves, then $X$ is $H$-rationally connected. Indeed, let 
\[
\pi_0 \colon X_0 \longrightarrow T_0
\]
be the $H$-rationally connected quotient of $X$, and suppose that $T_0$ is not a point. Observe that by definition, the pre-images under $\pi_0$ of any two distinct points in $T_0$ do not intersect.
By hypothesis, $\operatorname{Pic}(X) \cong \mathbb{Z} \cdot A$ for some ample divisor $A$ on $X$. Let $D$ be an ample divisor on $T_0$ and $Q \in T_0$ a point such that $Q \notin D$. Then 
\(
\overline{\pi_0^{-1}(D)} \sim a \cdot A
\)
for some $a \in \mathbb{Z}_{>0}$, while $\pi_0^{-1}(Q)$ is not contained in $\pi_0^{-1}(D)$. Hence, there exists a curve $\ell \subset \pi_0^{-1}(Q)$ such that $A \cdot \ell = 0$, which is a contradiction, since $A$ generates the Picard group of $X$. See also \cite[Lemmas~6.5 and~6.6]{AD-fano}.
\end{remark}

\begin{lemma}[{\cite[Lemma 6.9]{AD-fano}}]\label{L:ratconneted}
Let $X$ be a smooth projective variety, $H$ an unsplit dominating family of rational curves on $X$, and $\mathcal{F}$ a foliation on $X$. 
Denote by $\pi_0\colon X_0 \longrightarrow T_0$ the $H$-rationally connected quotient of $X$.
If $T_{\mathbb{P}^1} \subset f^*T_{\mathcal{F}}$ for general $[f] \in H$, then
there is an inclusion $T_{X_0/T_0} \subset T_{\mathcal{F}}|_{X_0}$. 
\end{lemma}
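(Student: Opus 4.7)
The plan is to show that the $H$-equivalence classes, which by definition are the fibers of $\pi_0$ over a dense open subset of $T_0$, are contained in leaves of $\mathcal{F}$; the inclusion of tangent sheaves then follows from saturation. First, I would promote the hypothesis from ``general $[f]\in H$'' to ``every $[f]\in H$''. The condition $T_{\mathbb{P}^1}\subset f^{*}T_{\mathcal{F}}$ is equivalent to the vanishing of the composition $T_{\mathbb{P}^1}\to f^{*}T_X\to f^{*}N_{\mathcal{F}}$, which is a closed condition on the parameter $[f]\in H$. Since $H$ is irreducible and the tangency holds on a non-empty open set, it holds on all of $H$; properness of $H$ (unsplitness) ensures that this closed condition is well behaved under specialization.

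Next, I would use Frobenius integrability to propagate tangency through chains. If a curve $C$ parametrized by $H$ meets a leaf $L$ of $\mathcal{F}$ at a point $x\notin\mathrm{Sing}(\mathcal{F})$, then the tangency $T_{C,x}\subset T_{\mathcal{F},x}=T_{L,x}$, combined with integrability, forces a neighborhood of $x$ in $C$ to lie inside $L$; irreducibility of $C$ then yields $C\subset\overline{L}$. Iterating this argument along chains of curves from $H$, and after restricting to a suitable dense open so that every relevant curve in a chain avoids $\mathrm{Sing}(\mathcal{F})$, every general $H$-equivalence class is contained in a single leaf of $\mathcal{F}$.

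Finally, for a general $t\in T_0$, the fiber $F=\pi_0^{-1}(t)$ sits inside a leaf of $\mathcal{F}$, so $T_F\subset T_{\mathcal{F}}|_F$. Hence the composition
\[
T_{X_0/T_0}\longrightarrow T_X|_{X_0}\longrightarrow (T_X/T_{\mathcal{F}})|_{X_0}
\]
vanishes on a dense open subset of $X_0$. Because $T_X/T_{\mathcal{F}}$ is torsion-free, this composition must be identically zero, yielding $T_{X_0/T_0}\subset T_{\mathcal{F}}|_{X_0}$. The main obstacle I anticipate is the careful handling of $\mathrm{Sing}(\mathcal{F})$ and of curves in $H$ that may pass through it, since such curves could break the Frobenius argument at an intermediate link of a chain. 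Properness of $H$, equidimensionality of $\pi_0$, and the freedom to shrink $X_0$ and $T_0$ should reduce this to a routine dimension count showing that the bad locus has positive codimension in both $H$ and in the universal family.
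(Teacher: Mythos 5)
The paper does not actually prove this lemma: it is quoted verbatim from \cite[Lemma 6.9]{AD-fano}, so there is no in-paper argument to compare yours against. That said, your proof follows the standard route for statements of this type, and its architecture is sound: the promotion from general to all $[f]\in H$ via the closedness of the vanishing of $T_{\mathbb{P}^1}\to f^*T_X\to f^*(T_X/T_{\mathcal{F}})$ over the proper family $H$ is correct (and genuinely needed, since the links of a connecting chain need not be general members of $H$), and the final reduction --- the map $T_{X_0/T_0}\to (T_X/T_{\mathcal{F}})|_{X_0}$ vanishes on a dense open set, hence identically because the target is torsion-free --- is exactly right.

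The one step you defer, the handling of $\mathrm{Sing}(\mathcal{F})$, is the only place where real work remains, and your description of it as ``a routine dimension count showing that the bad locus has positive codimension'' misidentifies the difficulty. The problem is not the size of the set of bad curves in $H$: even if that set is small, a specific chain realizing the $H$-equivalence of two general points of a fiber $F=\pi_0^{-1}(t)$ may have a link contained in $\mathrm{Sing}(\mathcal{F})$, or two consecutive links meeting only at a point of $\mathrm{Sing}(\mathcal{F})$, and at such a node the open--closed propagation of the leaf breaks down (the next link is tangent to $\mathcal{F}$ but may continue into a different leaf whose closure also passes through that singular point). What you actually need is that, for general $t$, the set of points of $F$ joined to a fixed general $x\in F$ by a chain whose links are not contained in $\mathrm{Sing}(\mathcal{F})$ and whose nodes avoid $\mathrm{Sing}(\mathcal{F})$ is dense in $F$; this requires an argument about how the fibers of the quotient of \cite[IV 4.16]{kollar:rational-on-algebraic} are swept out, for instance by iterating $\mathrm{Locus}(H,-)$ and using that general deformations of the links avoid any fixed closed subset of codimension at least two, in the spirit of \cite[II 3.7]{kollar:rational-on-algebraic}. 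Once that density is established, your Frobenius argument gives $T_F\subset T_{\mathcal{F}}|_F$ at general points of general fibers and the torsion-freeness step closes the proof. In short: right strategy, with one genuinely non-formal step still to be carried out.
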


\subsubsection{The tangential foliation}\label{S:Ftang}
Let $H$ be a dominating family of rational curves on $X$, and denote by $M \subset \operatorname{Mor}(\mathbb{P}^1, X)$ the irreducible component corresponding to the morphisms that parametrize the curves in $H$. Let $\mathcal{F}$ be a foliation on $X$. Our goal is to introduce a foliation on $M$, known as the \emph{tangential foliation} of $\mathcal{F}$. This construction allows us to study the properties of $\mathcal{F}$ via an analysis of its tangential foliation on $M$; see \cite[Section 6]{loray-pereira-touzet:trivial} and \cite[Section 4]{LPT-def}.  
Consider the evaluation map
\[
\operatorname{ev}\colon M \times \mathbb{P}^1 \longrightarrow X,\qquad ([f],z) \longmapsto f(z).
\]
 
By \cite[I, Theorem 2.16 and II, Proposition 3.5]{kollar:rational-on-algebraic}, the scheme $\operatorname{Mor}(\mathbb{P}^1, X)$ is smooth at a free morphism $f \colon \mathbb{P}^1 \longrightarrow X$, and its tangent space at $[f]$ is canonically identified with $H^0(\mathbb{P}^1, f^*T_X)$. For the foliation $\mathcal{F}$ on $X$, we have a natural inclusion
\(
H^0(\mathbb{P}^1, f^*T_{\mathcal{F}}) \subset H^0(\mathbb{P}^1, f^*T_X).
\)
The tangential foliation on $M$, denoted by $\mathcal{F}_{\operatorname{tang}}$, is defined as follows. For a general free morphism $f \colon \mathbb{P}^1 \longrightarrow X$, the tangent space of $\mathcal{F}_{\operatorname{tang}}$ at $[f]$ is
\[
T_{[f]}\mathcal{F}_{\operatorname{tang}} := H^0(\mathbb{P}^1, f^*T_{\mathcal{F}}).
\]
Alternatively, the tangential foliation is defined in the following way.
Consider the natural projection $\pi\colon M \times \mathbb{P}^1 \longrightarrow M$. By \cite[II Theorem 1.7]{kollar:rational-on-algebraic}, we have an isomorphism $T_M \cong \pi_* \operatorname{ev}^*T_X$, and so the inclusion $T_{\mathcal{F}} \subset T_X$ induces an inclusion
\(
i\colon \pi_* \operatorname{ev}^*T_{\mathcal{F}} \hookrightarrow T_M.
\)
We define the tangential foliation of $\mathcal{F}$ on $M$ by saturating the image of $i$ in $T_M$. By applying \cite[III Corollary 12.9]{hartshorne:ag}, we conclude that these two definitions are equivalent.

By definition, if $L$ is a leaf of $\mathcal{F}_{\operatorname{tang}}$ and $L \times \mathbb{P}^1 \longrightarrow X$ is the restriction of the evaluation morphism to $L \times \mathbb{P}^1$, the pullback of the foliation $\mathcal{F}$ on $X$ via this restriction is either the foliation given by the fibers of the projection $L \times \mathbb{P}^1 \longrightarrow \mathbb{P}^1$ or the foliation with just one leaf. For a more geometric description of the tangential foliation, see \cite[Section 6]{loray-pereira-touzet:trivial}. 
The following theorem illustrates how we can characterize the foliation $\mathcal{F}$ by studying the tangential foliation. 

\begin{theorem}[{\cite[Theorem C]{LPT-def}}]\label{T: teo C LPT}
Let $X$ be a uniruled smooth projective variety and let $\mathcal{F}$ be a codimension-one foliation on $X$.
Fix an irreducible component $M$ of the space of morphisms $\operatorname{Mor}(\mathbb{P}^1,X)$ containing a free morphism and let $\mathcal{F}_{\operatorname{tang}}$ be the tangential foliation of $\mathcal{F}$ defined on $M$. If the general leaf of $\mathcal{F}_{\operatorname{tang}}$ is not algebraic and the general morphism $f\colon \mathbb{P}^1 \longrightarrow X$ in $M$ intersects non-trivially and transversely all the algebraic hypersurfaces invariant by $\mathcal{F}$, then $\mathcal{F}$ is defined by a closed rational $1$-form without divisorial components in its zero set.  
\end{theorem}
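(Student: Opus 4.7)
My plan is to produce a closed rational $1$-form defining $\mathcal{F}$ by first constructing a transversely projective structure from the tangential foliation, then reducing it to a transversely Euclidean structure via the non-algebraicity hypothesis, and finally controlling the divisorial zero set using the hypersurface hypothesis.

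For the transverse structure, I would exploit the geometric description of $\mathcal{F}_{\mathrm{tang}}$ recalled in Section \ref{S:Ftang}: for a general leaf $L$ of $\mathcal{F}_{\mathrm{tang}}$ through $[f]\in M$, the restriction of the evaluation map $L \times \mathbb{P}^1 \to X$ pulls $\mathcal{F}$ back to the foliation whose leaves are $L \times \{x\}$. The projection to $\mathbb{P}^1$ then serves as a local first integral for the pulled-back foliation along the image, providing a $\mathbb{P}^1$-valued transversal to $\mathcal{F}$. As $L$ varies in $M$, these transversals glue by monodromy into a Riccati foliation on a $\mathbb{P}^1$-bundle over a Zariski open subset of $X$, equipping $\mathcal{F}$ with a transversely projective structure.

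To upgrade this to a transversely Euclidean structure, I use that the non-algebraicity of the general leaf of $\mathcal{F}_{\mathrm{tang}}$ rules out any rational first integral for $\mathcal{F}$: otherwise, composing such a first integral with $\mathrm{ev}$ would force every leaf of $\mathcal{F}_{\mathrm{tang}}$ to lie inside a single algebraic fiber. Then, by either producing an inequivalent second transverse structure from a distinct construction or analyzing the Riccati equation directly in view of the absence of first integrals, I invoke Lemma \ref{L:2transv aff structures} to reduce successively to a virtually transversely Euclidean structure, so $\mathcal{F}$ is defined by a closed rational $1$-form $\omega$.

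Finally, I would use the hypersurface hypothesis to eliminate divisorial zeros of $\omega$. Any divisorial component of $(\omega)_{0}$ is $\mathcal{F}$-invariant, and by assumption a general $f \in M$ meets each such divisor $D$ transversely and non-trivially; one can then absorb $D$ into $\omega$ by multiplying $\omega$ by an appropriate rational function, verifying that the modified form remains closed thanks to the invariance of $D$. Iterating produces the desired closed $1$-form without divisorial zeros. The main obstacle will be the Riccati construction itself: rigorously assembling the local $\mathbb{P}^1$-transversals from different leaves of $\mathcal{F}_{\mathrm{tang}}$ into a Riccati foliation on a global $\mathbb{P}^1$-bundle over $X$ requires a careful identification of the correct bundle and a verification that the monodromy acts by projective transformations. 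A secondary obstacle is producing the inequivalent second transverse structure needed for Lemma \ref{L:2transv aff structures}, which may require additional input from the hypersurface hypothesis rather than being automatic from the tangential foliation construction.
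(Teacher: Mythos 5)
First, note that the paper does not prove this statement at all: it is quoted verbatim as \cite[Theorem C]{LPT-def}, so there is no internal proof to compare against. Judged on its own terms, your proposal has the right overall flavor (exploit the evaluation map and the non-algebraic leaves of $\mathcal{F}_{\mathrm{tang}}$ to build a transverse structure, then use the transversality hypothesis to control divisorial zeros), but two of its steps do not go through as written.

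The first gap is the passage from a transversely projective structure to a closed rational $1$-form. You invoke Lemma \ref{L:2transv aff structures}, which requires \emph{two non-equivalent} structures, but you concede you do not know where the second one comes from; "analyzing the Riccati equation directly in view of the absence of first integrals" is not an argument. Worse, even if this step succeeded it would only yield a \emph{virtually} transversely Euclidean structure, and you then assert that this gives a closed rational $1$-form. It does not: virtually transversely Euclidean means the logarithmic form $\omega_1$ has periods commensurable with $2\pi\sqrt{-1}$, and only when the periods are integral multiples of $2\pi\sqrt{-1}$ is $\exp\int\omega_1$ single-valued, so that $\bigl(\exp\int\omega_1\bigr)\omega_0$ is a closed rational form. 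Closing this gap is precisely the content of \cite[Theorem 4.1 and Proposition 4.4]{LPT-def}, which the present paper has to invoke (together with its Lemma \ref{L:pullback transv euclidean}) even in the easier situation of Proposition \ref{Picard 2-Structural}; it is the technical heart of Theorem C and cannot be waved away. The second gap is the removal of divisorial zeros: multiplying a closed form $\omega$ by a rational function $g$ destroys closedness unless $dg\wedge\omega=0$, i.e.\ unless $g$ is a rational first integral --- which is exactly what the non-algebraicity hypothesis forbids. The correct mechanism is the one reproduced in the proof of Proposition \ref{Picard 2-Structural}: a divisorial component of $(\omega)_0$ is $\mathcal{F}$-invariant, hence by hypothesis meets a general $f\in M$ transversely and non-trivially, so its preimage under $\mathrm{ev}$ restricted to $U\times\mathbb{P}^1$ (with $U$ the smooth locus of the Zariski closure of a general non-algebraic leaf of $\mathcal{F}_{\mathrm{tang}}$) would dominate $U$, forcing $\mathrm{ev}^*\omega=a(z)\,dz$ and contradicting the non-algebraicity of the leaf. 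You should replace the "absorption" step by this contradiction argument.
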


When $X$ is a smooth projective uniruled variety, we can establish a version of Lemma \ref{L: pullback tansv} for transversely Euclidean structures in the following context:

\begin{lemma}\label{L:pullback transv euclidean}
Let $X$ be a smooth projective uniruled variety. Let $M$ be an irreducible dominant component of $\operatorname{Mor}(\mathbb{P}^1, X)$. Consider the evaluation morphism 
\[
\operatorname{ev}\colon M \times \mathbb{P}^1 \longrightarrow X, \quad ([f], z) \mapsto f(z).
\]

Let $\mathcal{F}$ be a codimension-one foliation on $X$. Suppose that the general morphism $f\colon \mathbb{P}^1 \longrightarrow X$ in $M$ intersects transversely all the algebraic hypersurfaces invariant by $\mathcal{F}$. The foliation $\operatorname{ev}^* \mathcal{F}$ on $M \times \mathbb{P}^1$ is transversely Euclidean if and only if $\mathcal{F}$ is.
\end{lemma}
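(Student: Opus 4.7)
The direction $(\Leftarrow)$ is immediate: if $\mathcal{F}$ is defined by a closed rational 1-form $\eta$ on $X$, then $\mathrm{ev}^*\eta$ is closed rational and defines $\mathrm{ev}^*\mathcal{F}$, so $\mathrm{ev}^*\mathcal{F}$ is transversely Euclidean. For the converse, assume $\mathrm{ev}^*\mathcal{F}$ is transversely Euclidean. Since this implies virtually transversely Euclidean, Lemma~\ref{L: pullback tansv} applied to the dominant morphism $\mathrm{ev}$ yields that $\mathcal{F}$ itself is virtually transversely Euclidean. Fix a virtually Euclidean structure $(\omega,\omega_1)$ on $\mathcal{F}$, with $\omega_1$ logarithmic of rational residues $\lambda_i$ along its polar divisors $D_i$. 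Then $\mathrm{ev}^*\mathcal{F}$ carries two virtually transversely Euclidean structures: the pullback $(\mathrm{ev}^*\omega,\mathrm{ev}^*\omega_1)$, and the honest Euclidean structure $(\tilde\omega,d\log\tilde g)$ provided by the hypothesis. The argument then splits into two cases.

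If the two structures are non-equivalent, Lemma~\ref{L:2transv aff structures}(3) produces a rational first integral $F\colon M\times\mathbb{P}^1\dashrightarrow C$ for $\mathrm{ev}^*\mathcal{F}$. Each general fibre of $\mathrm{ev}$ is contracted to a point of $X$ and hence lies in a single leaf of $\mathrm{ev}^*\mathcal{F}$, so $F$ is constant on these fibres and factors as $F=\tilde F\circ\mathrm{ev}$ for a rational $\tilde F\colon X\dashrightarrow C$. This $\tilde F$ is a rational first integral for $\mathcal{F}$, which is therefore defined by the closed rational 1-form $d\tilde F$ and is transversely Euclidean.

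If the two structures are equivalent, writing out the bundle-automorphism relating the underlying affine structures gives $\mathrm{ev}^*\omega_1=d\log\tilde h$ for some rational $\tilde h$ on $M\times\mathbb{P}^1$; in particular, $\mathrm{ev}^*\omega_1$ has integer residues along all its polar divisors. A local computation from $d\omega=\omega\wedge\omega_1$ shows that every $D_i$ is $\mathcal{F}$-invariant, and the transversality hypothesis then forces $\mathrm{ev}^{-1}(D_i)$ to contain an irreducible component of multiplicity one in $\mathrm{ev}^*D_i$, along which $\mathrm{ev}^*\omega_1$ has residue $\lambda_i$. Hence $\lambda_i\in\mathbb{Z}$ for every $i$. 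Setting $h_0=\prod f_i^{\lambda_i}$ (rational on $X$), the 1-form $\eta=\omega_1-d\log h_0$ is closed and holomorphic on $X$; restricting $\mathrm{ev}^*\eta$ to $\{f\}\times\mathbb{P}^1$ yields $f^*\eta$, a holomorphic 1-form on $\mathbb{P}^1$ that must vanish, so $\eta$ descends through the $H$-rationally connected quotient $\pi_0\colon X_0\to T_0$. The identity $\mathrm{ev}^*\eta=d\log(\tilde h/\mathrm{ev}^*h_0)$, combined with the surjectivity on fundamental groups of the induced morphism $M\to T_0$, then identifies $\eta$ with the logarithmic derivative of a rational function on $X$, so $\omega_1$ itself is $d\log$ of a rational function and $\mathcal{F}$ is transversely Euclidean.

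The main obstacle lies in the equivalent case: while integrality of the residues of $\omega_1$ follows cleanly from the transversality hypothesis, one must still eliminate a possibly non-trivial closed holomorphic remainder $\eta$, since a uniruled variety may carry non-zero holomorphic 1-forms. The key point is that $\mathbb{P}^1$ supports no such forms, which forces $\eta$ to descend through the rationally connected quotient, where integrality of global periods can be verified via a $\pi_1$-surjectivity argument.
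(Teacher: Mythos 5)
Your proof reaches the right conclusion and its skeleton coincides with the paper's: both reduce, via Lemma~\ref{L: pullback tansv}, to the statement that $\mathcal{F}$ is virtually transversely Euclidean with structure $(\omega_0,\omega_1)$, and the real content is upgrading the periods of $\omega_1$ from $2\pi\sqrt{-1}\,\mathbb{Q}$ to $2\pi\sqrt{-1}\,\mathbb{Z}$. Where you diverge is in how that upgrade is done. The paper transports periods directly: it lifts a loop $\gamma$ in $X$ through $\mathrm{ev}$, equates $\int_\gamma\omega_1$ with the corresponding period of $\mathrm{ev}^*\omega_1$, and invokes integrality of the latter. You instead compare the pulled-back structure $(\mathrm{ev}^*\omega_0,\mathrm{ev}^*\omega_1)$ with the given Euclidean structure on $\mathrm{ev}^*\mathcal{F}$ and split according to Lemma~\ref{L:2transv aff structures}: in the non-equivalent case you obtain a rational first integral that descends along $\mathrm{ev}$, and in the equivalent case you extract integrality of the residues of $\omega_1$ from the multiplicity-one components of $\mathrm{ev}^*D_i$ supplied by the transversality hypothesis, then kill the holomorphic remainder by restricting to the rational curves. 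This dichotomy is actually a gain in rigor: the paper's final sentence tacitly assumes the Euclidean structure on $\mathrm{ev}^*\mathcal{F}$ is the pullback structure, which is precisely the case you isolate, and your first-integral branch covers the complementary possibility.

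That said, three points in your equivalent case need tightening. (i) It is not true in general that every polar component of $\omega_1$ is $\mathcal{F}$-invariant; what holds (when $\omega_0$ has no divisorial zeros) is that the non-invariant components carry integer residues, which is all you need, but it should be stated that way since the transversality hypothesis of the lemma only concerns invariant hypersurfaces. (ii) The factorization $h_0=\prod f_i^{\lambda_i}$ as a rational function on $X$ presupposes that $\sum\lambda_i D_i$ is principal; the residue relation only gives that its class is topologically trivial, and a uniruled $X$ may have nontrivial $\mathrm{Pic}^0$, so this step needs an argument or a reformulation purely in terms of periods. (iii) The final descent through the $H$-rationally connected quotient is both vague and unavailable in the stated generality ($M$ is an arbitrary component of $\mathrm{Mor}(\mathbb{P}^1,X)$, so $T_0$ is not defined here); the cleaner route is to argue on $M\times\mathbb{P}^1$ itself that $\mathrm{ev}_*\pi_1$ has finite index in $\pi_1(X)$, so the closed holomorphic form $\eta$ has periods in a discrete subgroup, whence a power of $\exp\int\eta$ is a global invertible holomorphic function on the projective variety $X$ and $\eta=0$. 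With these repairs the argument is complete, and arguably more robust than the paper's loop-lifting step, whose assertion that $\mathrm{ev}^{-1}(\gamma)$ is itself a closed loop also requires a choice of lift to be made precise.
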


\begin{proof}
If $\mathcal{F}$ is transversely Euclidean, pulling back its transverse affine structure makes $\operatorname{ev}^*\mathcal{F}$ transversely Euclidean. Conversely, suppose $\operatorname{ev}^*\mathcal{F}$ is transversely Euclidean; in particular, $\operatorname{ev}^* \mathcal{F}$ is virtually transversely Euclidean. 
By Lemma~\ref{L: pullback tansv}, $\mathcal{F}$ is virtually transversely Euclidean. Hence, there exist holomorphic $1$-forms $\omega_0,\omega_1$ such that $\mathcal{F}$ is defined by $\omega_0$, $d\omega_0=\omega_0\wedge\omega_1$, and $d\omega_1=0$.  The pair $(\operatorname{ev}^* \omega_0, \operatorname{ev}^* \omega_1)$ defines a transverse structure for $\operatorname{ev}^* \mathcal{F}$. The strategy now is to show that the periods of the $1$-forms $\omega_1$ and $\operatorname{ev}^* \omega_1$ coincide.

Let $D \subset X$ be an $\mathcal{F}$-invariant irreducible hypersurface, and consider a closed path $\gamma$ intersecting $D$ transversely (see Section~\ref{S:logarithmic components}). 
Since $M$ is dominant and a general point $[f] \in M$ intersects $D$ transversely, we can choose $[f] \in M$ such that $f$ is an embedding near a point of $D$ and $f(\mathbb{P}^1)$ intersects $D$ exactly once. Let $\Delta \subset \mathbb{P}^1$ be a small disc with $f(\partial\Delta) \subset X \setminus \operatorname{Sing}(\mathcal{F})$ and $f(\Delta)$ intersecting $D$ transversely at one interior point. Set $\gamma' := f(\partial\Delta)$, which is homotopic to $\gamma$, so $\int_{\gamma'}\omega_1 = \int_\gamma \omega_1$.

Consider the lift $\widetilde{\gamma}(t) := ([f], \partial\Delta(t))$ in the fiber $\{[f]\} \times \mathbb{P}^1 \subset M \times \mathbb{P}^1$. Then $\operatorname{ev} \circ \widetilde{\gamma} = \gamma'$ and $\operatorname{ev}$ restricts to an isomorphism on this fiber near $\widetilde{\gamma}$. Thus, 
\[
\int_{\widetilde{\gamma}} \operatorname{ev}^* \omega_1 = \int_{\widetilde{\gamma}} f^* \omega_1 = \int_{\gamma'} \omega_1 = \int_\gamma \omega_1.
\]

Therefore, the periods of $\omega_1$ and $\operatorname{ev}^* \omega_1$ are the same. This implies that $\mathcal{F}$ is also transversely Euclidean.
\end{proof}\subsubsection{The degree of a codimension-one foliation with respect to a family of rational curves}\label{S:deg fol ratcurves}
Let $X$ be a smooth projective uniruled variety and let $H$ be a dominating family of rational curves on $X$. Analogous to the degree of a foliation on $\mathbb{P}^n$, we introduce the concept of the degree of a foliation with respect to $H$.

\begin{definition}
Let $\mathcal{F}$ be a codimension-one foliation on a smooth projective variety $X$ and let $H$ be a dominating family of rational curves on $X$. We define the \emph{degree} of $\mathcal{F}$ with respect to $H$, denoted by $\deg_H \mathcal{F}$, as follows. We set   
$\deg_H \mathcal{F} = -\infty$ if the general curve in $H$ is generically tangent to $\mathcal{F}$. Otherwise, we define $\deg_H \mathcal{F}$ to be the number of tangencies of a general curve in $H$ with $\mathcal{F}$.
\end{definition}

Now let $X$ be a smooth uniruled projective variety with Picard number one; recall that $X$ is Fano (see Remark~\ref{Fano}).  
In this setting, there is no foliation tangent to a general curve in a dominating family of rational curves, as stated in the following result.

\begin{proposition}\label{P:deg-maior-que-0}
Let $X$ be a smooth Fano variety of Picard number one and let $H$ be an unsplit dominating family of rational curves on $X$. Let $\mathcal{F}$ be a codimension-one foliation on $X$. Then 
\[
\deg_H \mathcal{F} \ge 0.
\]
\end{proposition}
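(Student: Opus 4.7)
The plan is to argue by contradiction: assume $\mathrm{deg}_H\,\mathcal{F}=-\infty$, so that for a general morphism $f\colon \mathbb{P}^1\longrightarrow X$ parametrizing a curve of $H$ we have $T_{\mathbb{P}^1}\subset f^*T_\mathcal{F}$. The aim is to deduce $T_\mathcal{F}=T_X$, which contradicts the fact that $T_\mathcal{F}$ is a proper subsheaf of $T_X$.

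First I reduce to the case in which $H$ is unsplit. Since $X$ is Fano, the anticanonical degrees of members of $H$ are bounded, and Mori's bend-and-break applied through two general points on a general member of $H$ produces a minimal dominating family $H'$ of rational curves. Because the tangency of an irreducible curve to the distribution $T_\mathcal{F}$ is a closed condition on the space of rational $1$-cycles, every irreducible component of such a specialization remains tangent to $\mathcal{F}$, and so the general member of $H'$ is tangent to $\mathcal{F}$. Minimality forces $H'$ to be unsplit; replacing $H$ by $H'$ we may assume $H$ itself is unsplit.

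By Lemma~\ref{L:ratconneted}, the hypothesis $T_{\mathbb{P}^1}\subset f^*T_\mathcal{F}$ for general $[f]\in H$ then yields an inclusion $T_{X_0/T_0}\subset T_\mathcal{F}|_{X_0}$, where $\pi_0\colon X_0\longrightarrow T_0$ is the $H$-rationally connected quotient of $X$. The Picard number one hypothesis forces $T_0$ to be a point: otherwise, after compactifying and resolving the indeterminacy of $\pi_0$, the pullback of an ample divisor from $T_0$ would provide a nonzero effective divisor on $X$ which is numerically trivial on the positive-dimensional fibers of $\pi_0$, contradicting the fact that every nonzero effective divisor on a Picard-number-one variety is ample. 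Thus $X_0$ is dense in $X$ and $T_{X_0/T_0}=T_{X_0}=T_X|_{X_0}$, so $T_\mathcal{F}|_{X_0}=T_X|_{X_0}$; by saturation of $T_\mathcal{F}$ in $T_X$, we conclude $T_\mathcal{F}=T_X$, giving the desired contradiction.

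The principal obstacle is the reduction step: one has to verify that after applying bend-and-break, the resulting unsplit minimal family consists of curves that dominate $X$ and remain tangent to $\mathcal{F}$. This rests on the closedness of the tangency condition under specialization and on the fact that, on a Fano variety, bend-and-break preserves dominance when applied to a suitable subfamily of $H$ through a general point.
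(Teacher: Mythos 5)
Your proof follows essentially the same route as the paper's: apply Lemma~\ref{L:ratconneted} to obtain $T_{X_0/T_0}\subset T_\mathcal{F}|_{X_0}$, use the Picard number one hypothesis to show that the base $T_0$ of the $H$-rationally connected quotient is a point, and conclude $T_\mathcal{F}=T_X$, a contradiction. The only differences are that you insert a bend-and-break reduction to an unsplit family before invoking the lemma (the paper applies it directly to the given dominating family $H$, whose unsplitness is the lemma's stated hypothesis) and that you spell out the final contradiction $T_\mathcal{F}=T_X$ explicitly, which the paper leaves implicit.
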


\begin{proof}
Let $\pi\colon X_0 \longrightarrow T_0$ be the $H$-rationally connected quotient of $X$, defined in Section \ref{ratconnected}. Since $\rho(X) = 1$, it follows that $T_0$ is a point (see Remark \ref{R:rat connected}). 
Suppose that the general curve of $H$ is tangent to the foliation $\mathcal{F}$ on $X$. Then, by Lemma \ref{L:ratconneted}, there is an inclusion $T_{X_0/T_0} \subset T_{\mathcal{F}}|_{X_0}$, which is a contradiction.
\end{proof}

Let $\mathcal{F}$ be a codimension-one foliation on a smooth uniruled projective variety $X$ induced by a $1$-form $\omega \in H^0(X, \Omega^1_X \otimes N_{\mathcal{F}})$.
Suppose that $\deg_H \mathcal{F} \ge 0$. Consider a general curve $[C] \in H$. Then the restriction $\omega|_C$ belongs to $H^0(C, (\Omega^1_X \otimes N_{\mathcal{F}})|_C)$. Since we have a natural restriction morphism  
\[
(\Omega^1_X \otimes N_{\mathcal{F}})|_C \longrightarrow \Omega^1_C \otimes (N_{\mathcal{F}})|_C,
\]
we can regard $\omega|_C$ as a section of $H^0(C, \Omega^1_C \otimes (N_{\mathcal{F}})|_C)$.
It follows that the degree of $\mathcal{F}$ with respect to $H$ is given by the degree of the line bundle $\Omega^1_C \otimes (N_{\mathcal{F}})|_C$.
Since $C$ is a rational curve, we obtain
\begin{equation}\label{E:normal degree}
\deg_H \mathcal{F} = N_{\mathcal{F}} \cdot C - 2,
\end{equation}
see, for instance, \cite[Proposition 2, Chapter 2]{brunella:book}.

Our next goal is to study codimension-one foliations on $X$ with $\deg_H \mathcal{F} = 0$.

\begin{lemma}\label{L:deg0-fTF}
Let $X$ be a smooth projective uniruled variety, let $H$ be a minimal dominating family of rational curves on $X$, and let $[f] \in M$ be a general free morphism. Let $\mathcal{F}$ be a codimension-one foliation on $X$ with $\deg_H \mathcal{F} = 0$. Then the pullback of $T_{\mathcal{F}}$ via $f$ decomposes as 
\[
f^* T_{\mathcal{F}} \cong \mathcal{O}_{\mathbb{P}^1}(1)^{\oplus p} \oplus \mathcal{O}_{\mathbb{P}^1}^{\oplus (n-1-p)},
\]
where $p = \deg(f^*T_X) - 2$.
\end{lemma}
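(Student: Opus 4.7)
The plan is to pull back the defining exact sequence of $\mathcal{F}$ along $f$ and use the degree hypothesis to force a splitting determined by the canonical subsheaf $T_{\mathbb{P}^1} \hookrightarrow f^*T_X$. Since $\mathrm{codim}\,\mathrm{Sing}(\mathcal{F}) \geq 2$ and $H$ is dominating, a general free morphism $[f] \in M$ has image disjoint from $\mathrm{Sing}(\mathcal{F})$. Over the smooth locus $X^0 = X \setminus \mathrm{Sing}(\mathcal{F})$ the normal sheaf fits into $0 \to T_\mathcal{F} \to T_X \to N_\mathcal{F} \to 0$, and pulling back via $f$ yields a short exact sequence on $\mathbb{P}^1$:
\[
0 \longrightarrow f^* T_\mathcal{F} \longrightarrow f^* T_X \longrightarrow f^* N_\mathcal{F} \longrightarrow 0.
\]

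Next I would input the two pieces of numerical information. The hypothesis $\deg_H \mathcal{F} = 0$ together with the formula $\deg_H \mathcal{F} = N_\mathcal{F} \cdot C - 2$ recalled in Section~\ref{Tangential foliation} gives $f^* N_\mathcal{F} \cong \mathcal{O}_{\mathbb{P}^1}(2)$. On the other hand, since $[f]$ is a general member of the minimal dominating family $H$, one has the standard splitting
\[
f^* T_X \cong \mathcal{O}_{\mathbb{P}^1}(2) \oplus \mathcal{O}_{\mathbb{P}^1}(1)^{\oplus p} \oplus \mathcal{O}_{\mathbb{P}^1}^{\oplus (n-p-1)},
\]
in which the $\mathcal{O}(2)$-summand is canonically identified with the subsheaf $T_{\mathbb{P}^1} \hookrightarrow f^* T_X$ coming from the differential $df$.

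The crucial step is to show that the composition
\[
T_{\mathbb{P}^1} \hookrightarrow f^* T_X \twoheadrightarrow f^* N_\mathcal{F}
\]
is nonzero. Indeed, were it zero, then $T_{\mathbb{P}^1}$ would factor through $f^* T_\mathcal{F}$, which means that a general curve parametrized by $H$ is tangent to $\mathcal{F}$, forcing $\deg_H \mathcal{F} = -\infty$ and contradicting the hypothesis. Since both source and target are isomorphic to $\mathcal{O}_{\mathbb{P}^1}(2)$, any nonzero morphism between them is an isomorphism. Thus $T_{\mathbb{P}^1} \hookrightarrow f^* T_X$ provides a splitting of the surjection onto $f^* N_\mathcal{F}$, yielding $f^* T_X \cong f^* T_\mathcal{F} \oplus T_{\mathbb{P}^1}$ and consequently
\[
f^* T_\mathcal{F} \;\cong\; f^* T_X / T_{\mathbb{P}^1} \;\cong\; \mathcal{O}_{\mathbb{P}^1}(1)^{\oplus p} \oplus \mathcal{O}_{\mathbb{P}^1}^{\oplus (n-p-1)},
\]
as required.

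There is no deep obstacle here: once the right exact sequence is set up, the argument is essentially bookkeeping with line bundles on $\mathbb{P}^1$. The only point requiring care is the elementary but essential observation that a nonzero endomorphism of $\mathcal{O}_{\mathbb{P}^1}(2)$ is an isomorphism, which is precisely what upgrades the dimension-counting contradiction (ruling out $\deg_H\mathcal{F} = -\infty$) into a genuine splitting of the normal sequence rather than merely an upper bound on the $a_i$'s.
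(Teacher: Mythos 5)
Your proof is correct and follows essentially the same route as the paper: both arguments reduce to observing that the hypothesis $\deg_H\mathcal{F}=0$ forces the composition $T_{\mathbb{P}^1}\hookrightarrow f^*T_X\twoheadrightarrow f^*N_{\mathcal{F}}\cong\mathcal{O}_{\mathbb{P}^1}(2)$ to be an isomorphism, splitting the normal sequence and identifying $f^*T_{\mathcal{F}}$ with $N_f\cong\mathcal{O}_{\mathbb{P}^1}(1)^{\oplus p}\oplus\mathcal{O}_{\mathbb{P}^1}^{\oplus(n-1-p)}$. Your explicit justification that the composite is nonzero (else $\deg_H\mathcal{F}=-\infty$) spells out what the paper summarizes as transversality of $f(\mathbb{P}^1)$ to $\mathcal{F}$.
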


\begin{proof}
According to \cite[IV Definition 2.8 and Corollary 2.9]{kollar:rational-on-algebraic}, the pullback of the tangent bundle of $X$ via $f$ decomposes as  
\[
f^*T_X \cong \mathcal{O}_{\mathbb{P}^1}(2) \oplus \mathcal{O}_{\mathbb{P}^1}(1)^{\oplus p} \oplus \mathcal{O}_{\mathbb{P}^1}^{\oplus (n-1-p)},
\]  
where $n = \dim(X)$ and $p = \deg(f^*T_X) - 2 \ge 0$. 
Additionally, since $[f] \in M$ is general and $\operatorname{cod} \operatorname{Sing}(\mathcal{F}) \ge 2$, we may assume that $f(\mathbb{P}^1) \cap \operatorname{Sing}(\mathcal{F}) = \emptyset$ (see \cite[II Proposition 3.7]{kollar:rational-on-algebraic}). Thus, the pullback of the tangent sheaf of the foliation is a vector bundle that decomposes as
\[
f^*T_{\mathcal{F}} \cong \bigoplus_{i=1}^{n-1} \mathcal{O}_{\mathbb{P}^1}(a_i).
\]  
Since $\deg_H \mathcal{F} = 0$, it follows that $f^*N_{\mathcal{F}} \cong \mathcal{O}_{\mathbb{P}^1}(2)$.  
Thus, computing the first Chern class, we obtain $c_1(f^*T_{\mathcal{F}}) = \sum_{i=1}^{n-1} a_i = p \ge 0$. Now observe that we have the following exact sequence of the rational curve on $X$ parametrized by $[f]$:  
\[
0 \longrightarrow T_{\mathbb{P}^1} \cong \mathcal{O}(2) \longrightarrow f^* T_X \cong \mathcal{O}(2) \oplus \mathcal{O}(1)^{\oplus p} \oplus \mathcal{O}^{\oplus n-1-p} \longrightarrow N_f \cong \mathcal{O}(1)^{\oplus p} \oplus \mathcal{O}^{\oplus n-1-p} \longrightarrow 0.
\]  
We also have the exact sequence of the foliation:  
\[
0 \longrightarrow f^* T_{\mathcal{F}} \longrightarrow f^* T_X \longrightarrow f^* N_{\mathcal{F}} \cong \mathcal{O}_{\mathbb{P}^1}(2) \longrightarrow 0.
\]  
Since $f(\mathbb{P}^1)$ is transverse to $\mathcal{F}$, we obtain an isomorphism $T_{\mathbb{P}^1} \longrightarrow f^* N_{\mathcal{F}}$, which implies an isomorphism between $f^* T_{\mathcal{F}}$ and $N_f$. Thus,  
\[
f^* T_{\mathcal{F}} \cong \mathcal{O}_{\mathbb{P}^1}(1)^{\oplus p} \oplus \mathcal{O}_{\mathbb{P}^1}^{\oplus n-1-p}.
\]
\end{proof}

In the next result we give a bound on the algebraic rank of $\mathcal{F}$ in terms of the algebraic rank of $\mathcal{F}_{\operatorname{tang}}$.

\begin{proposition}\label{P:bound algebraic rank}
Let $X$ be a smooth projective uniruled variety, and let $\mathcal{F}$ be a codimension-one foliation on $X$ of degree zero with respect to a minimal dominating family of rational curves $H$. Let $M$ be the irreducible component of $\operatorname{Mor}(\mathbb{P}^1, X)$ parametrizing the rational curves in $H$, and consider the tangential foliation $\mathcal{F}_{\operatorname{tang}}$ induced by $\mathcal{F}$ on $M$. Then
\[
r_a(\mathcal{F}) \ge r_a(\mathcal{F}_{\operatorname{tang}}) - p,
\]
where $p = \deg f^*T_X - 2$ for $[f] \in M$.
\end{proposition}

\begin{proof}
Consider the tangential foliation $\mathcal{F}_{\operatorname{tang}}$ induced by $\mathcal{F}$ on $M$. Let $Y \subset M$ be an algebraic variety contained in a general leaf of $\mathcal{F}_{\operatorname{tang}}$ with maximal dimension, and let $z \in \mathbb{P}^1$. Consider the map
\[
\psi \colon Y \longrightarrow X, \quad [f] \mapsto f(z).
\]
By the definition of the tangential foliation, the image of $\psi$ is an algebraic variety contained in a general leaf of $\mathcal{F}$. Therefore, the algebraic rank of $\mathcal{F}$ is greater than or equal to the dimension of the image of $\psi$.
Consider the open subset $M_0 \subset M$ consisting of free morphisms. Fix a point $z \in \mathbb{P}^1$ and a general point $x \in X$ with $f(z) = x$. Define
\[
M_x = \{ [f] \in M_0 \mid f(z) = x \} \subset M_0.
\]
Then
\[
T_{M_x} = \left(\pi|_{M_x \times \mathbb{P}^1}\right)_* \left(\operatorname{ev}|_{M_x \times \mathbb{P}^1}\right)^* T_X\big(-\sigma_z(M_x)\big) \subset T_{M_0}|_{M_x} = \big(\pi_* \operatorname{ev}^* T_X\big)|_{M_x},
\]
where $\sigma_z \colon M \longrightarrow M \times \mathbb{P}^1$ is the section defined by $z$, and the dimension of $M_x$ is
\(
h^0\big(\mathbb{P}^1, f^* T_X(-1)\big) = p + 2,
\)
see \cite[Proposition 2]{Mori1979ProjEM} and \cite[Section 3]{druel2005classes}.
Denote by $Y_x$ the intersection of $Y$ with $M_x$. Observe that $Y_x$ is the fiber of $x$ under the morphism $\psi$. Consider in $M_x$ the foliation $\mathcal{F}_{\operatorname{tang},x}$ defined by  
\[
T_{\mathcal{F}_{\operatorname{tang}}}|_{M_x} \cap T_{M_x}.
\]  
By Lemma \ref{L:deg0-fTF}, this foliation has dimension equal to 
\(
h^0(\mathbb{P}^1, f^* T_{\mathcal{F}}(-1)) = p
\) 
on $M_x$. Note that $Y_x$ is contained in a leaf of $\mathcal{F}_{\operatorname{tang},x}$, and consequently, the dimension of $Y_x$ is at most $p$. Thus, the image of $\psi$ has dimension equal to $r_a(\mathcal{F}_{\operatorname{tang}}) - p$, and we conclude the proof. 
\end{proof}

A direct consequence of Proposition~\ref{P:bound algebraic rank} is that, for a codimension-one foliation of degree zero with respect to a minimal dominating family of rational curves, the algebraic integrability of the tangential foliation $\mathcal{F}_{\operatorname{tang}}$ implies that the foliation $\mathcal{F}$ itself is also algebraically integrable, as established in the following result.

\begin{corollary}\label{L:Ftang alg int}
Let $X$ be a smooth projective uniruled variety, and let $\mathcal{F}$ be a codimension-one foliation on $X$ with degree zero with respect to a minimal dominating family of rational curves $H$. Let $M$ be the irreducible component of $\operatorname{Mor}(\mathbb{P}^1, X)$ parametrizing the rational curves in $H$. If the tangential foliation $\mathcal{F}_{\operatorname{tang}}$ on $M$ is algebraically integrable, then $\mathcal{F}$ is algebraically integrable.
\end{corollary}

\begin{proof}
Let $M$ be the irreducible component in $\operatorname{Mor}(\mathbb{P}^1, X)$ parametrizing curves in $H$.  
Consider the tangential foliation $\mathcal{F}_{\operatorname{tang}}$ induced by $\mathcal{F}$ on $M$, and let $[f] \in M$ be a general free morphism. By Lemma~\ref{L:deg0-fTF}, the rank of $\mathcal{F}_{\operatorname{tang}}$ is given by  
\(
h^0(\mathbb{P}^1, f^* T_{\mathcal{F}}) = n - 1 + p.
\)
Assume that $\mathcal{F}_{\operatorname{tang}}$ is algebraically integrable. Then, by Proposition~\ref{P:bound algebraic rank}, we have  
\[
r_a(\mathcal{F}) \ge r_a(\mathcal{F}_{\operatorname{tang}}) - p = n - 1.
\]
It follows that $r_a(\mathcal{F}) = n - 1$, and therefore $\mathcal{F}$ is algebraically integrable.
\end{proof}

When $\mathcal{F}_{\operatorname{tang}}$ is algebraically integrable, even if $\deg_H \mathcal{F} > 0$, we can still extract some information about the algebraicity of the foliation $\mathcal{F}$, as demonstrated in the following  

\begin{proposition}[{\cite[Proposition 4.10]{LPT-def}}]\label{P:Ftang-algint}
Let $X$ be a uniruled smooth projective variety and let $\mathcal{F}$ be a codimension-one foliation on $X$.
If all leaves of $\mathcal{F}_{\operatorname{tang}}$ are algebraic, then there exists an algebraically integrable foliation $\mathcal{G}$ contained in $\mathcal{F}$ such that 
\[
H^0(\mathbb{P}^1, f^* T_{\mathcal{F}}) = H^0(\mathbb{P}^1, f^* T_{\mathcal{G}})
\]
for a general $f \in M$. In particular, $\mathcal{F}$ is the pullback under a rational map of a foliation on lower-dimensional smooth projective varieties.
\end{proposition}\section{Rational homogeneous varieties}\label{S: Rat homog var}
A complex algebraic variety $X$ is called a \emph{homogeneous variety} if it admits a transitive action of a semisimple linear Lie group $G$. By transitivity, the stabilizers of all points in $X$ are conjugate to a subgroup $P \subset G$. Consequently, the variety can be fully described in terms of this subgroup $P$. The variety $X$ is projective if and only if $P$ is parabolic, that is, a connected subgroup of $G$ containing a Borel subgroup, see \cite[Theorem 7.5]{ottaviani}. We write $X = G/P$ to explicitly indicate the group and the corresponding parabolic subgroup. Moreover, $X$ has Picard number one if and only if $P$ is maximal with respect to inclusion; see \cite{ottaviani} for an introduction to such varieties.

We have a structural theorem for projective rational homogeneous varieties, see \cite[Theorem 1.6]{ottaviani}: 

\begin{theorem}[Borel--Remmert, 1962]
A projective rational homogeneous variety is isomorphic to a product
\[
X = G_1/P_1 \times \cdots \times G_k/P_k,
\]
where $G_i$ are simple groups and $P_i$ are parabolic subgroups.
\end{theorem}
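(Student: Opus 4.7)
The plan is to reduce everything to the statement that a parabolic subgroup of a product of simple groups splits as a product of parabolic subgroups of the factors. Write $X = G/P$ for some semisimple complex Lie group $G$ acting transitively and with parabolic stabilizer $P$. Passing to the universal cover does not change the quotient $G/P$ up to isomorphism of varieties (the center acts trivially on $G/P$), so I may assume $G$ is simply connected. Then the classical structure theory of semisimple Lie groups gives a decomposition $G = G_1 \times \cdots \times G_k$ into simple factors, corresponding to the decomposition of the Lie algebra $\mathfrak{g} = \mathfrak{g}_1 \oplus \cdots \oplus \mathfrak{g}_k$ into simple ideals.

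The core step is the claim that $P = P_1 \times \cdots \times P_k$ with $P_i \subseteq G_i$ parabolic. The way I would verify this is to fix a maximal torus $T = T_1 \times \cdots \times T_k$ with $T_i \subset G_i$ and a Borel subgroup $B = B_1 \times \cdots \times B_k$ of $G$ contained in $P$ (after conjugation, which one may do without loss of generality since all parabolics containing $T$ are conjugate to one containing $B$). The root system $\Phi$ of $(G,T)$ decomposes as a disjoint union $\Phi = \Phi_1 \sqcup \cdots \sqcup \Phi_k$, and the set of simple roots $\Delta$ correspondingly partitions as $\Delta = \Delta_1 \sqcup \cdots \sqcup \Delta_k$. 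Since parabolic subgroups containing a fixed Borel are in bijection with subsets $I \subseteq \Delta$, there exist subsets $I_i \subseteq \Delta_i$ such that $P$ corresponds to $I = I_1 \sqcup \cdots \sqcup I_k$. Letting $P_i \subseteq G_i$ be the standard parabolic associated with $I_i$, the product $P_1 \times \cdots \times P_k$ has the same root datum as $P$, hence coincides with $P$.

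Once the decomposition of $P$ is established, the quotient map is visibly a product: the natural morphism
\[
G_1/P_1 \times \cdots \times G_k/P_k \longrightarrow (G_1 \times \cdots \times G_k)/(P_1 \times \cdots \times P_k) = G/P
\]
is $G$-equivariant and bijective on points, hence an isomorphism of varieties. Projectivity of each factor follows because each $P_i$ is parabolic in the simple group $G_i$, and projectivity of a product of projective varieties is automatic.

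The main obstacle is the parabolic-splitting claim; everything else is bookkeeping. To make that step rigorous I would either invoke the Bruhat-type classification of parabolics in terms of subsets of simple roots (as sketched above), or, more directly, argue that the projection $\mathrm{pr}_i \colon G \to G_i$ sends $P$ onto a parabolic subgroup $P_i$ because the image contains the Borel $B_i = \mathrm{pr}_i(B)$, and then show $P \supseteq P_1 \times \cdots \times P_k$ using that $P$ is normalized by each $T_i$ together with a root-space analysis of $\mathfrak{p}$; equality then follows from the opposite inclusion, which is immediate from $\mathrm{pr}_i(P) \subseteq P_i$.
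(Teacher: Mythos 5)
The paper does not actually prove this statement: it is quoted as a classical result of Borel--Remmert and attributed to \cite[Theorem 1.6]{ottaviani}, so there is no in-paper argument to compare against. Your proposal is a correct, self-contained proof of the statement in the form the paper uses it (namely, starting from $X=G/P$ with $G$ semisimple and $P$ parabolic, which is exactly how the paper sets up projective homogeneous varieties in the preceding paragraph). The reduction to the simply connected cover is justified correctly, since the kernel of $\widetilde G\to G$ is central, hence contained in every Borel and so in the preimage of $P$; the decomposition $G=G_1\times\cdots\times G_k$ into simple factors is standard; and the key splitting $P=P_1\times\cdots\times P_k$ follows, as you say, from the classification of parabolics containing a fixed Borel $B=B_1\times\cdots\times B_k$ by subsets $I\subseteq\Delta=\Delta_1\sqcup\cdots\sqcup\Delta_k$, because the standard parabolic $P_I$ is generated by $B$ and the negative root subgroups for $\alpha\in I$, each of which lives in a single factor $G_i$. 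Two small points you could tighten: the phrase \emph{bijective on points, hence an isomorphism} deserves a word of justification (either invoke that in characteristic zero a bijective morphism onto a normal variety is an isomorphism, or better, observe that $(G_1\times\cdots\times G_k)/(P_1\times\cdots\times P_k)$ is canonically the product of the $G_i/P_i$ by the universal property of quotients); and the sentence about conjugating $P$ is slightly garbled --- the clean statement is that every parabolic contains some Borel and all Borels are conjugate, so after conjugation one may assume $B\subseteq P$. Neither affects the validity of the argument.
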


\begin{example}
Let $G = \operatorname{SL}(V)$, where $V = \mathbb{C}^{n+1}$, and let $B \subset G$ be the subgroup of upper-triangular matrices. Then the quotient $X = G/B$ is the variety of complete flags
\[
\{0 \subset V_1 \subset V_2 \subset \cdots \subset V_n \subset V\},
\]
with $\dim(V_r) = r$ for each $r$. More generally, for any parabolic subgroup $P \supset B$, the quotient $X = G/P$ is a flag variety; see \cite[Example 3.1]{BFM}. Grassmannians and projective spaces appear as special cases when $P$ is a maximal parabolic subgroup of $G$.

Now, let $G = \operatorname{SO}(V)$ be the orthogonal group of automorphisms of $V$ preserving a non-degenerate quadratic form $Q$. A Borel subgroup $B \subset G$ is the stabilizer of a complete isotropic flag
\[
\{0 \subset V_1 \subset V_2 \subset \cdots \subset V_n \subset V\},
\]
where each $V_r$ is an isotropic subspace of dimension $r$. The quotient $X = G/B$ is then the variety of such flags:
\[
X = \left\{0 \subset V_1 \subset V_2 \subset \cdots \subset V_n \subset V \;\middle|\; Q(V_n, V_n) = 0 \right\}.
\]
Similarly to the $\operatorname{SL}(V)$ case, other parabolic subgroups of $\operatorname{SO}(V)$ yield additional homogeneous varieties, such as the orthogonal Grassmannian, which parametrizes $k$-dimensional subspaces of $V$ that are isotropic with respect to $Q$. See, for example, \cite[Section 23.3]{fulton-harris:representations}.
\end{example}

Let $G$ be a simple complex linear Lie algebraic group, and let $X$ be a rational homogeneous variety under the action of $G$.  
Denote by $\mathfrak{g}$ the Lie algebra of $G$, and fix a Cartan subalgebra $\mathfrak{h} \subset \mathfrak{g}$.  
Let $\Phi \subset \mathfrak{h}^*$ be the root system of $\mathfrak{g}$, giving the root space decomposition
\[
\mathfrak{g} = \mathfrak{h} \oplus \bigoplus_{\alpha \in \Phi} \mathfrak{g}_\alpha,
\]
where $\mathfrak{h}$ corresponds to the zero weight; see \cite[Section 6]{ottaviani}.  
Choosing a direction in $\mathfrak{h}^*$ relative to the lattice generated by $\Phi$, the roots decompose into positive roots $\Phi^+$ and negative roots $\Phi^-$.  
Let $\{ \alpha_1, \ldots, \alpha_r \} \subset \Phi^+$ be the set of simple roots of $G$, that is, the roots that cannot be expressed as a sum of two positive roots. 
Fix a subset $A \subset \{ \alpha_1, \ldots, \alpha_r \}$ and consider
\[
\Phi^{-}(A) := \left\{ \alpha \in \Phi^{-} \;\middle|\; \alpha = \sum_{\alpha_i \notin A} p_i \alpha_i \right\}.
\]
According to \cite[Proposition-Definition 7.7]{ottaviani}, the parabolic subalgebra corresponding to $A$ is the subalgebra of $\mathfrak{g}$ given by
\[
\mathfrak{p}_A = \mathfrak{h} \oplus \bigoplus_{\alpha \in \Phi^+} \mathfrak{g}_\alpha \oplus \bigoplus_{\alpha \in \Phi^-(A)} \mathfrak{g}_\alpha.
\]
The corresponding parabolic subgroup $P_A \subset G$ is the subgroup whose Lie algebra is $\mathfrak{p}_A$.  
Every parabolic subgroup of $G$ arises in this way; see \cite[Theorem 7.8 and Corollary 7.10]{ottaviani}.  
Consequently, the parabolic subgroups of a simple Lie group can be completely described in terms of the Dynkin diagram of its Lie algebra. Moreover, $P_A$ is \emph{maximal} if the subset $A$ consists of a single root. In this case, $X = G/P_A$ is a Fano variety of Picard rank one; for a full description of the parabolics subgrous, see \cite[Proposition 10.4]{ottaviani}.

\subsection{Homogeneous vector bundles and the Bott--Borel--Weil theorem}\label{S: BBW theorem e hom vec bndles}
Let $X = G/P$ be a projective homogeneous variety, where $G$ is a semisimple complex linear algebraic group and $P \subset G$ is a parabolic subgroup. 
In this section, we introduce homogeneous vector bundles over $X$, describe their construction from irreducible representations of $P$, and state the Bott--Borel--Weil theorem, which relates the cohomology of these bundles to representations of $G$. For further details, see \cite[Theorem 10.16 and Theorem 11.4]{ottaviani}. We begin by defining homogeneous vector bundles.

\begin{definition}
Let $X = G/P$ be a projective homogeneous variety under the action of a semisimple complex linear algebraic group $G$, and let $P \subset G$ be a parabolic subgroup. 
A vector bundle $E$ on $X$ is called \emph{homogeneous}, or \emph{$G$-equivariant}, if the action of $G$ on $X$ lifts to an action on $E$ such that the following diagram commutes:
\[
\begin{tikzcd}
G \times E \arrow[r] \arrow[d] & E \arrow[d] \\
G \times X \arrow[r] & X
\end{tikzcd}
\]
\end{definition}

There exists an equivalence of categories between homogeneous vector bundles on $X = G/P$ and representations of $P$, described as follows. 
Given a representation $V$ of $P$ defined by $\rho\colon P \longrightarrow \operatorname{GL}(V)$, one can construct a vector bundle 
\[
E := G \times_P V
\]
on $X$ by taking the quotient of $G \times V$ via the relation 
\[
(g, v) \sim (g', v') \quad \text{if there exists } p \in P \text{ such that } g = g' p \text{ and } v = \rho(p^{-1}) v'.
\]
The fiber of $E$ over the point stabilized by $P$ is isomorphic to $V$, as shown in \cite[Theorem 9.7]{ottaviani}.  
Hence, to study homogeneous vector bundles on $X = G/P$, it suffices to describe the representations of $P \subset G$; see \cite[Section 9]{ottaviani}.

For the description of irreducible representations of $P$, see \cite[Propositions 10.5 and 10.9]{ottaviani}.  
Recall that an irreducible representation is uniquely determined by its dominant weight, which we denote by $\lambda$. Such a representation will be denoted by $V_\lambda$; see \cite[Theorem 6.36]{ottaviani}.  
The irreducible vector bundle associated with the representation $V_\lambda$ is then denoted by $E_\lambda$.  
Vector bundles on $X$ arising from an irreducible representation of $P$ are called \emph{irreducible}, and those arising from direct sums of irreducible representations of $P$ are called \emph{completely reducible}.

We say that a weight $\lambda$ is \emph{singular} if there exists a positive root $\alpha \in \Phi^+$ such that $(\lambda, \alpha) = 0$.  
On the other hand, $\lambda$ is \emph{regular of index $p$} if it is not singular and there exist exactly $p$ positive roots $\alpha_1, \dots, \alpha_p \in \Phi^+$ such that $(\lambda, \alpha_i) < 0$ for each $i = 1, \dots, p$.  
A weight $\lambda$ is called \emph{dominant} if $(\lambda, \alpha) \ge 0$ for every simple positive root $\alpha$.

The following theorem, known as the Bott--Borel--Weil theorem, is a fundamental tool in our work; see \cite[Theorems 10.11 and 11.4]{ottaviani}.

\begin{theorem}\label{T:BBW}
Let $X = G/P$ be a projective homogeneous variety under the action of a semisimple, simply connected complex Lie group $G$.  
Let $V_\lambda$ be the irreducible representation of $P$ with highest weight $\lambda$, and let $E_{\lambda}$ be the associated irreducible homogeneous vector bundle on $X$.  
Denote by $\delta = \sum_{i=1}^r \lambda_i$ the sum of all fundamental weights of $G$.

\begin{enumerate}
    \item If $\lambda$ is dominant, then $H^0(X, E_\lambda) = V_\lambda$.
    
    \item If $\lambda + \delta$ is singular, then $H^i(X, E_\lambda) = 0$ for all $i$.
    
    \item If $\lambda + \delta$ is regular of index $p$, then $H^i(X, E_\lambda) = 0$ for $i \neq p$.
\end{enumerate}

Moreover,
\[
H^p(X, E_\lambda) = V_{w(\lambda+\delta)-\delta},
\]     
where $w(\lambda + \delta)$ is the unique dominant weight in the fundamental Weyl chamber congruent to $\lambda + \delta$ under the Weyl group action, and $p$ is the length of the Weyl group element sending $\lambda + \delta$ to this chamber (for the definition of the length of an element in the Weyl group, see \cite[Definition 11.10]{ottaviani}).
\end{theorem}\subsection{Adjoint varieties}\label{adjoint-var}
We now focus on a special class of rational homogeneous varieties known as \emph{adjoint varieties}.  
Let $G$ be a connected, simple, reductive Lie group with Lie algebra $\mathfrak{g}$. Consider the adjoint representation of $G$ on $\mathfrak{g}$.    
Consequently, $G$ acts naturally on the complex projective space $\mathbb{P}\mathfrak{g}$.  
Since $\mathfrak{g}$ is simple, there exists a unique closed orbit under this action, denoted by $X(\mathfrak{g})$.  
This orbit is a smooth projective variety in $\mathbb{P}\mathfrak{g}$; see \cite[Proposition 2.6]{B97}.  

We call $X(\mathfrak{g})$ the \emph{adjoint variety} associated with the simple Lie algebra $\mathfrak{g}$, and we refer to the natural embedding $X \hookrightarrow \mathbb{P}\mathfrak{g}$ as the \emph{adjoint embedding}.
Consider the canonical projection $\pi\colon \mathfrak{g} \setminus \{0\} \to \mathbb{P}\mathfrak{g}$.  
Let $v_{\lambda} \in \mathfrak{g}$ be a highest weight vector of the adjoint representation, and let $x = \pi(v_{\lambda})$ be its image under $\pi$.  
Then $X(\mathfrak{g})$ is the orbit of $x$ under the action of $G$; see \cite[Section 2]{Kaji-99}.  
Adjoint varieties always have odd dimension; see \cite[Proposition 2.2]{Kaji-99}.  
Moreover, in a series of two papers \cite{kebekus2000uniqueness, Kebekus2003LinesOC}, Kebekus showed that these varieties are covered by an unsplit minimal dominating family of rational curves.
\subsubsection{The contact structure} \label{S:contact}
Adjoint varieties are equipped with a \emph{contact structure}, which can be described as follows. Let $X$ be an adjoint variety of dimension $n = 2m + 1$.  
There exists an exact sequence  
\[
0 \longrightarrow \mathcal{D} \longrightarrow T_X \xrightarrow{\theta} \mathcal{L} \longrightarrow 0,
\]
where $\mathcal{L}$ is the very ample line bundle defining the embedding $X \hookrightarrow \mathbb{P}(\mathfrak{g})$, and $\mathcal{D}$ is a distribution of rank $2m$. The line bundle $\mathcal{L}$ is called the \emph{contact line bundle}. By a \emph{distribution}, we mean a saturated coherent subsheaf of $T_X$.  
The map $T_X \xrightarrow{\theta} \mathcal{L}$ is defined by contraction with an $\mathcal{L}$-valued $1$-form $\theta$, called a \emph{contact $1$-form}, that is, $\theta \wedge (d\theta)^m$ is nowhere vanishing. In other words, the $\mathcal{O}_X$-bilinear map  
\(
(u,v) \longmapsto \theta([u,v])
\)  
on $\mathcal{D}$ is everywhere non-degenerate; see also \cite{B97}.  
This gives rise to an isomorphism  
\(
\mathcal{D} \longrightarrow \mathcal{D}^{*} \otimes \mathcal{L},
\)  
which sends each section $u$ to $\varphi_u$, where $\varphi_u(v) = \theta([u,v])$; see \cite[Section 1.4.6]{Hwang01}.  Since $\operatorname{rank}(\mathcal{D}) = 2m$, we have:
\[
\det(\mathcal{D}) \cong \det(\mathcal{D}^\vee \otimes \mathcal{L}) \cong \det(\mathcal{D}^\vee) \otimes \mathcal{L}^{2m}.
\]
Consequently,
\(
\det(\mathcal{D})^{\otimes 2} \cong \mathcal{L}^{2m},
\)
and taking first Chern classes we obtain
\(
c_1(\mathcal{D}) = m \mathcal{L}.
\)
Now, from the contact exact sequence,
\[
K_X = \det(T_X^\vee) \cong \mathcal{L}^* \otimes \det(\mathcal{D}^\vee) \cong \mathcal{L}^{-1} \otimes \det(\mathcal{D})^{-1}.
\]
Using $\det(\mathcal{D}) \cong \mathcal{L}^m$ from above, this becomes:
\(
K_X \cong \mathcal{L}^{-1} \otimes \mathcal{L}^{-m} = \mathcal{L}^{-(m+1)}.
\)
Thus, we obtain $-K_X \cong (m+1)\mathcal{L}$ in $\operatorname{Pic}(X)$.
It is conjectured that any smooth projective Fano contact variety of Picard number one is an adjoint variety; see, for instance, the survey in \cite[Section 4.3]{beauville1999riemannian}.

\subsubsection{The adjoint varieties of classical groups}\label{E:adjoint of classical}
\begin{enumerate}

\item \textit{Type $A_n$: a smooth hyperplane section of $\mathbb{P}^n \times \mathbb{P}^n$.}

Let $V$ be an $(n+1)$-dimensional vector space over $\mathbb{C}$. 
Consider the simple Lie group $\operatorname{SL}(V)$ and its Lie algebra of trace-free matrices $\mathfrak{sl}(n+1)$. 
The unique closed orbit of the action of $\operatorname{SL}(V)$ on $\mathbb{P}(\mathfrak{sl}(n+1))$ is the hyperplane section of $\mathbb{P}^n \times (\mathbb{P}^n)^{\vee}$ defined by the trace equation on $\mathfrak{sl}(n+1)$. 
Thus, the adjoint variety $X$ associated to the simple Lie algebra of type $A_n$ is a smooth hyperplane section of $\mathbb{P}^n \times \mathbb{P}^n$. 

Note that the adjoint embedding of $X$ into $\mathbb{P}(\mathfrak{sl}(n+1))$ is the composition of the inclusion of $X$ into $\mathbb{P}^n \times \mathbb{P}^n$ with the Segre embedding:
\[
X \subset \mathbb{P}^n \times \mathbb{P}^n \hookrightarrow \mathbb{P}(\mathfrak{sl}(n+1)).
\] 
We have the two natural projections $\pi_1 \colon X \longrightarrow \mathbb{P}^n$ and $\pi_2 \colon X \longrightarrow \mathbb{P}^n$. For $i = 1,2$, let $h_i$ be the pullback by $\pi_i$ of the hyperplane class. Then 
\[
\mathcal{O}_X(1) = \mathcal{O}_X(h_1 + h_2),
\]
and the canonical bundle of $X$ is given by 
\(
\omega_X = \mathcal{O}_X(-n).
\)

\item \textit{Type $B_n$ and $D_n$: the orthogonal Grassmannian of lines.}

Let us consider the simple Lie group $\operatorname{SO}(V)$ acting on a vector space $V$ of dimension $m$. In the case of $B_n$, we have $m = 2n + 1$, and in the case of $D_n$, we have $m = 2n$. 
The action of $\operatorname{SO}(V)$ on $V$ preserves a symmetric non-degenerate bilinear form $q \in S^2 V$. The adjoint variety $X$ associated with the Lie algebra $\mathfrak{so}(m)$ is the orthogonal Grassmannian 
\[
X = \operatorname{OG}(2,m),
\] 
which parametrizes $2$-dimensional isotropic subspaces $W \subset V$ with respect to this symmetric form $q$. The adjoint embedding  
\[
X \hookrightarrow \mathbb{P}(\mathfrak{so}(m)) = \mathbb{P}\left(\bigwedge^2 V\right)
\]  
is the composition of the inclusion of $X$ into $G(2,m)$ and the Plücker embedding. Due to isomorphisms in the corresponding small-dimensional Lie algebras, we have the following isomorphisms:  
\[
X(\mathfrak{sp}(4)) \cong X(\mathfrak{so}(5)), \quad 
X(\mathfrak{sl}(2)) \cong X(\mathfrak{so}(4)), \quad 
\text{and} \quad X(\mathfrak{sl}(3)) \cong X(\mathfrak{so}(6)).
\]  
Thus, the Picard number of $X(\mathfrak{so}(m))$ is two for $m = 4,6$.  
When the dimension of $V$ is greater than $6$, the adjoint embedding is minimal, meaning that the Picard number of $X$ is generated by $\mathcal{O}_X(1)$.

\item \textit{Type $C_n$: the projective space $\mathbb{P}^{2n-1}$.} 

Let $V$ be a $2n$-dimensional vector space over $\mathbb{C}$. Consider the simple Lie group $\operatorname{Sp}(V)$, which is defined by the $2n \times 2n$ matrices that preserve a fixed non-degenerate skew-symmetric $2$-form on $V$. In an appropriate basis, we identify its Lie algebra $\mathfrak{sp}(2n)$ with $S^2 V$. Then the unique closed orbit of the action of $\operatorname{Sp}(V)$ on $\mathbb{P}(\mathfrak{sp}(2n))$ is $\mathbb{P}(V)$, and the embedding
\[
i\colon \mathbb{P}(V) \hookrightarrow \mathbb{P}(\mathfrak{sp}(2n)) = \mathbb{P}(S^2 V)
\]
is precisely the second Veronese embedding. Hence, the adjoint variety associated to the Lie algebra $\mathfrak{sp}(2n)$ is the second Veronese embedding of the projective space $\mathbb{P}^{2n-1} = \mathbb{P}(V)$. The adjoint embedding is defined by $\mathcal{O}_{\mathbb{P}(V)}(2)$; in other words,
\[
i^*\mathcal{O}_{\mathbb{P}(S^2 V)}(1) \cong \mathcal{O}_{\mathbb{P}(V)}(2),
\]
which follows from the fact that the highest weight of the adjoint representation is $2\lambda_1$.

\end{enumerate}

The adjoint varieties associated with the exceptional simple Lie algebras all have Picard number one, and their adjoint embeddings are minimal. For further details, see \cite[Example 5.5]{Kaji-99}.
\section{Foliations on Fano varieties with Picard number one}\label{sec4-Fano-picard1}
Let $X$ be a Fano variety of Picard number one, and let $H$ be a minimal dominating family of rational curves on $X$ (such a family always exists; see \cite[Chapter V, Theorem 1.6]{kollar:rational-on-algebraic}).  
For a codimension-one foliation $\mathcal{F}$ on $X$, Proposition \ref{P:deg-maior-que-0} implies that the degree of $\mathcal{F}$ with respect to $H$ is non-negative.  
 This section aims to describe those codimension-one foliations on $X$ for which $\deg_H \mathcal{F} = 0$.   
Our first result in this setting is the following.

\begin{proposition}\label{main}
Let $X$ be a smooth Fano variety of Picard number one. Let $\mathcal{F}$ be a codimension-one foliation on $X$ with degree zero with respect to a minimal dominating family of rational curves $H$ on $X$. Then $\mathcal{F}$ is algebraically integrable.
\end{proposition}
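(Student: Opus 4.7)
The plan is to study the tangential foliation $\mathcal{F}_{\mathrm{tang}}$ on an irreducible component $M\subset\mathrm{Mor}(\mathbb{P}^1,X)$ parametrizing the rational curves in $H$, and to reduce, via Lemma~\ref{L:Ftang alg int}, to proving that $\mathcal{F}_{\mathrm{tang}}$ is algebraically integrable. Lemma~\ref{L:deg0-fTF} supplies, for a general free $[f]\in M$, the splitting $f^{*}T_\mathcal{F}\cong\mathcal{O}_{\mathbb{P}^1}(1)^{\oplus p}\oplus\mathcal{O}_{\mathbb{P}^1}^{\oplus(n-1-p)}$, so $\mathcal{F}_{\mathrm{tang}}$ is nonzero. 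The argument then proceeds by dichotomy on whether the general leaf of $\mathcal{F}_{\mathrm{tang}}$ is algebraic: if it is, $\mathcal{F}_{\mathrm{tang}}$ is algebraically integrable by definition and Lemma~\ref{L:Ftang alg int} finishes at once; otherwise, I derive a contradiction from Theorem~\ref{T: teo C LPT}.

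Assume then that the general leaf of $\mathcal{F}_{\mathrm{tang}}$ is not algebraic. The hypotheses of Theorem~\ref{T: teo C LPT} are easy to verify in the Picard rank one setting: every $\mathcal{F}$-invariant hypersurface is ample because $\mathrm{Pic}(X)=\mathbb{Z}$, so it meets every rational curve non-trivially, and there are only finitely many such invariant hypersurfaces (else $\mathcal{F}$ would already be algebraically integrable by Jouanolou's theorem), making generic transversality for a general free $f$ routine. Theorem~\ref{T: teo C LPT} then gives that $\mathcal{F}$ is defined by a closed rational 1-form $\omega$ with no divisorial components in its zero locus; viewing $\omega\in H^0(X,\Omega^1_X\otimes N_\mathcal{F})$, the polar divisor $D$ of $\omega$ has class $[N_\mathcal{F}]$, so $D\cdot C=N_\mathcal{F}\cdot C=2$.

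Picard rank one now forces $\omega$ to admit a rational first integral, producing the desired contradiction. Every irreducible component of $D$ is ample, so $D\cdot C=2$ leaves only a few configurations: the residue theorem, meaningful thanks to $H^1(X,\mathcal{O}_X)=0$ for Fano $X$, rules out a single reduced component, and a local closedness computation at the polar divisor rules out higher-order poles concentrated on one component. Therefore $D=D_1+D_2$ is reduced, with residues $\lambda_1,\lambda_2$ obeying $\lambda_1 d_1+\lambda_2 d_2=0$, where $D_i\equiv d_iH$. Cyclicity of $\mathrm{Pic}(X)$ implies $d_2 D_1-d_1 D_2=(g)$ for some $g\in\mathbb{C}(X)$, and the logarithmic form $dg/g$ has the same residue pattern up to a scalar $\mu\in\mathbb{C}^{*}$. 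Hence $\omega-\mu\,dg/g$ is a closed holomorphic 1-form on $X$ and therefore vanishes because $H^0(X,\Omega^1_X)=0$ for any Fano variety; thus $\omega=\mu\,dg/g$, $g$ is a rational first integral for $\mathcal{F}$, and $\mathcal{F}$ is algebraically integrable. But then all leaves of $\mathcal{F}_{\mathrm{tang}}$ are algebraic (as preimages of fibers of $g$ under the evaluation map), contradicting the standing assumption and closing the proof.

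The main obstacle is the polar-divisor analysis in the last paragraph: cleanly excluding higher-order poles of $\omega$ along a single ample component requires combining the sharp bound $D\cdot C=2$ with a local closedness computation and the cohomological vanishings for Fano varieties, and then matching residues against a carefully chosen logarithmic form. Once this is in place, the residue theorem does the rest.
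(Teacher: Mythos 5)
Your argument follows the paper's proof essentially step for step: the dichotomy on whether the general leaf of $\mathcal{F}_{\mathrm{tang}}$ is algebraic, Lemma~\ref{L:Ftang alg int} in the algebraic case, and Theorem~\ref{T: teo C LPT} plus a Picard-rank-one analysis of the resulting closed rational $1$-form in the other case; your final residue computation is just a hands-on version of the paper's appeal to the structure of logarithmic $1$-forms, which likewise forces $D=D_1+D_2$ with $D_i\in|\mathcal{O}_X(1)|$ and opposite residues, i.e.\ a pencil of hyperplane sections. The only cosmetic slip is that the case of a double pole along a single component is not so much ``ruled out'' as it directly yields an exact form $d(g/f_1)$ and hence a rational first integral, which lands in the same conclusion.
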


\begin{proof}
Let $M$ be the irreducible component in $\operatorname{Mor}(\mathbb{P}^1, X)$ parametrizing curves in $H$. Consider the evaluation map 
\[
\operatorname{ev}\colon M \times \mathbb{P}^1 \longrightarrow X, \qquad (f,z) \longmapsto f(z),
\]
and the natural projection $\pi\colon M \times \mathbb{P}^1 \longrightarrow M$. 
Consider the tangential foliation $\mathcal{F}_{\operatorname{tang}}$ induced by $\mathcal{F}$ on $M$ (see Section \ref{S:Ftang}). When $\mathcal{F}_{\operatorname{tang}}$ is algebraically integrable, Lemma \ref{L:Ftang alg int} implies that $\mathcal{F}$ is algebraically integrable and we are done.  

Suppose that $\mathcal{F}_{\operatorname{tang}}$ is not algebraically integrable. 
Since the Picard number of $X$ is one and $\deg_H \mathcal{F} = 0$, the image of a general morphism $f\colon \mathbb{P}^1 \to X$ in $M$ intersects every invariant hypersurface nontrivially and transversely.  
Then, by Theorem \ref{T: teo C LPT}, $\mathcal{F}$ is defined by a closed rational $1$-form $\omega$ without divisorial components in its zero set and with poles along a simple normal crossing divisor $D = \sum_{i=1}^k D_i$, where each $D_i$ is an irreducible divisor on $X$ and $N_{\mathcal{F}} \cong \mathcal{O}_X(D)$.  
In other words, $\mathcal{F}$ belongs to a logarithmic component (see Section \ref{S:logarithmic components}).

Moreover, since $\deg_H \mathcal{F} = 0$, we have $f^* N_{\mathcal{F}} \cong \mathcal{O}_{\mathbb{P}^1}(2)$.
Let $\mathcal{O}_X(1)$ denote the ample generator of the Picard group of $X$.
As $X$ has Picard number one, the normal sheaf of the foliation is $\mathcal{N}_{\mathcal{F}} \cong \mathcal{O}_X(a)$ for $a = 1$ or $2$. It follows from the description in Section \ref{S:logarithmic components} and from \cite[Theorem 3.1]{Pereira22} that 
\[
\mathcal{N}_{\mathcal{F}} \cong \mathcal{O}_X(D) = \mathcal{O}_X(2).
\]

Consequently, we must have $D = \{h_1 = 0\} + \{h_2 = 0\}$ for some $h_1, h_2 \in H^0(X, \mathcal{O}_X(1))$, and the foliation $\mathcal{F}$ is then defined by a $1$-form with residues $\lambda_1$ along $\{h_1 = 0\}$ and $\lambda_2$ along $\{h_2 = 0\}$, satisfying $\lambda_1 = -\lambda_2$ due to the closedness of $\omega$ (see \cite[Section 3]{Pereira22}).  
In other words, $\mathcal{F}$ is defined by the $1$-form 
\[
\eta = h_1 \, dh_2 - h_2 \, dh_1,
\]
which corresponds to a pencil of sections on $|\mathcal{O}_X(1)|$.
\end{proof}

Assuming additional hypotheses, we show in the next proposition that such degree-zero foliations are pencils of hyperplane sections with respect to the minimal embedding.

\begin{proposition}\label{P: pencil}
Let $\mathcal{F}$ be a codimension-one foliation on a smooth Fano variety $X$ of Picard number one covered by a family of lines $H$. Suppose that there are no integrable $2$-forms in $H^0(X, \Omega^2_X(k))$ for $k \le 2$, and that $\deg_H \mathcal{F} = 0$. Then $\mathcal{F}$ is a pencil of hyperplane sections; that is, $\mathcal{F}$ is defined by a $1$-form of the form $\omega = h_1 \, dh_2 - h_2 \, dh_1$ for $h_1, h_2 \in H^0(X, \mathcal{O}_X(1))$.
\end{proposition}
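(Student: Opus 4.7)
The plan is to combine Proposition \ref{main} with a normal-bundle computation against the family of lines $H$, in order to promote algebraic integrability to the specific shape of a pencil in $|\mathcal{O}_X(1)|$.

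First I would invoke Proposition \ref{main}: since $X$ is Fano of Picard number one and $\mathrm{deg}_H\,\mathcal{F}=0$, the foliation $\mathcal{F}$ is algebraically integrable. Let $\pi\colon X \dashrightarrow C$ be a first integral with connected fibers (via Stein factorization). Because $X$ is Fano and therefore rationally connected, $C \cong \mathbb{P}^1$. Thus $\pi$ is encoded by a linear pencil $\mathbb{P}^1 \subset |L|$ for some line bundle $L$, and the assumption $\mathrm{Pic}(X)=\mathbb{Z}\cdot \mathcal{O}_X(1)$ forces $L \cong \mathcal{O}_X(k)$ for some integer $k \geq 1$.

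Next I would compute $N_\mathcal{F}$ using that $H$ consists of lines. For a general $\ell \in H$ one has $\mathcal{O}_X(1)\cdot \ell = 1$, and the degree formula
\[
0 \;=\; \mathrm{deg}_H\,\mathcal{F} \;=\; N_\mathcal{F}\cdot \ell - 2
\]
gives $N_\mathcal{F}\cdot \ell = 2$, whence $N_\mathcal{F} \cong \mathcal{O}_X(2)$. On the other hand, picking generators $s_1, s_2 \in H^0(X,L)$ of the pencil with no common divisorial factor, the foliation $\mathcal{F}$ is defined by the 1-form $\omega = s_2\, ds_1 - s_1\, ds_2$. A transition-function check shows that $\omega$ is a global section of $\Omega^1_X \otimes L^{\otimes 2}$ whose zero locus sits inside the base locus of the pencil together with the critical set of $\pi$. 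Assuming this locus has codimension at least two (so that $\omega$ is saturated), we obtain $N_\mathcal{F} \cong L^{\otimes 2} \cong \mathcal{O}_X(2k)$. Comparing with $N_\mathcal{F}\cong \mathcal{O}_X(2)$ yields $k=1$, so that $\mathcal{F}$ is defined by $\omega = h_1\, dh_2 - h_2\, dh_1$ for $h_1, h_2 \in H^0(X,\mathcal{O}_X(1))$, i.e., a pencil of hyperplane sections.

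The hypothesis on the absence of integrable 2-forms in $H^0(X, \Omega^2_X(k))$ for $k \leq 2$ is what prevents the pathologies that could derail the saturation step and the identification $N_\mathcal{F}=L^{\otimes 2}$. Concretely, an alternative algebraically integrable structure, a codimension-two subfoliation compatible with $\mathcal{F}$, or a nontrivial 2-form extracted from $d\omega$ in a local trivialization of $N_\mathcal{F}$, would each give rise to an integrable section of $\Omega^2_X(k)$ with $k \leq 2$; the hypothesis rules these out and pins the pencil down to $|\mathcal{O}_X(1)|$. The main technical obstacle is precisely this bookkeeping: verifying that the assumption on $H^0(X,\Omega^2_X(k))$ suffices to exclude a codimension-one component in the zero locus of $\omega$ (or equivalently a pencil in a higher multiple of $\mathcal{O}_X(1)$), and hence to guarantee the equality $2k=2$ on which the conclusion rests.
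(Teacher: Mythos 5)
Your first step (invoking Proposition \ref{main} to get algebraic integrability, and rational connectedness to get $C\cong\mathbb{P}^1$) matches the paper, and the computation $N_{\mathcal{F}}\cdot\ell=2$, hence $N_{\mathcal{F}}\cong\mathcal{O}_X(2)$, is correct. But there is a genuine gap at the heart of the argument: you write the first integral as a pencil generated by $s_1,s_2\in H^0(X,L)$, set $\omega=s_2\,ds_1-s_1\,ds_2$, and then say ``assuming this locus has codimension at least two (so that $\omega$ is saturated), we obtain $N_{\mathcal{F}}\cong L^{\otimes 2}$.'' That assumption is precisely what has to be proved. If the pencil has a non-reduced member $\lambda s_1+\mu s_2=P^{m}Q$ (a multiple fiber), the form $s_2\,ds_1-s_1\,ds_2$ vanishes to order $m-1$ along $\{P=0\}$, and after saturating the normal bundle drops; the identity $N_{\mathcal{F}}\cong L^{\otimes 2}$ simply fails in that case, so comparing with $\mathcal{O}_X(2)$ does not pin down $k=1$. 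The paper's proof spends most of its effort exactly here: assuming $T_{\mathcal{F}}$ semi-stable, it uses \cite[Theorem 1.2]{LPT13} to get irreducibility of the fibers, \cite[Theorem 3.3]{LPT13} to bound the number of multiple fibers by two, and then the canonical-bundle identity $-K_{\mathcal{F}}=-K_X-2m\mathcal{A}+(m_1-1)\frac{m}{m_1}\mathcal{A}+(m_2-1)\frac{m}{m_2}\mathcal{A}$ combined with $N_{\mathcal{F}}\cong\mathcal{O}_X(2)$ to force $m=m_1=m_2$, and finally \cite[Proposition 3.5]{LPT13} to force $m=1$. None of this bookkeeping appears in your proposal.

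Relatedly, your account of where the hypothesis on integrable $2$-forms enters is not correct. It is not there to ``prevent pathologies in the saturation step'': in the paper it is used only to exclude the case where $T_{\mathcal{F}}$ is \emph{not} semi-stable. In that case the maximal destabilizing subsheaf of the Harder--Narasimhan filtration is involutive by \cite[Proposition 2.2]{LPT13} and defines a codimension-two subfoliation $\mathcal{G}\subset\mathcal{F}$, which by \cite[Lemma 3.1]{LPT13} is cut out by an integrable $2$-form in $H^0(X,\Omega^2_X(k))$ with $k\le 2$ --- contradicting the hypothesis and thereby making the semi-stable analysis above available. Your proposal never mentions stability, so the dichotomy that organizes the whole proof is missing, and the three vaguely described sources of $2$-forms you list do not substitute for this mechanism. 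To repair the argument you would need to (i) introduce the semi-stable/non-semi-stable dichotomy and use the $2$-form hypothesis to kill the second branch, and (ii) in the semi-stable branch, carry out the multiple-fiber analysis rather than assuming the defining form of the pencil is already saturated.
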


\begin{proof}
Let $\mathcal{F}$ be a codimension-one foliation on $X$ with $\deg_H \mathcal{F} = 0$. We can apply Proposition \ref{main} to conclude that $\mathcal{F}$ is algebraically integrable. 
Since $\deg_H \mathcal{F} = 0$, there are no tangencies between $\mathcal{F}$ and a general line $[l] \in H$. Assuming that $T_{\mathcal{F}}$ is semistable, we can apply \cite[Proposition 3.5]{LPT13} to conclude that the foliation $\mathcal{F}$ is a pencil of hypersurfaces of degree $m$ with at most two multiple fibers and without non-irreducible fibers (see \cite[Theorem 3.3 and Theorem 3.2]{LPT13}). In other words, $\mathcal{F}$ belongs to a logarithmic component which can be written as $\operatorname{Log}(d_1, d_2)$, where $d_1 \cdot d_2 = m$ and the two multiple fibers have degrees $d_1$ and $d_2$ (see Section \ref{S:logarithmic components}). 

Let $\varphi\colon X \longrightarrow \mathbb{P}^1$ be a rational dominant map defining $\mathcal{F}$. The canonical divisor and consequently the normal bundle of this foliation can be computed as presented in \cite[Paragraph 2.6]{Araujo-Druel-2019}: $K_X \otimes N_{\mathcal{F}} = K_{\mathcal{F}} = K_{X|\mathbb{P}^1} - R(\varphi)$, where $R(\varphi)$ is the ramification divisor and depends only on these two multiple fibers. Then $N_{\mathcal{F}} \cong \mathcal{O}_X(2m + 2 - d_1 - d_2)$. 
Moreover, since $X$ is covered by a family of lines $H$ and $\deg_H \mathcal{F} = 0$, we must have $N_{\mathcal{F}} \cong \mathcal{O}_X(2)$ and then $m = d_1 = d_2 = 1$.
Thus $\mathcal{F}$ is a pencil of hyperplane sections on $X$.

On the other hand, if $T_{\mathcal{F}}$ is not semistable, then, by \cite[Proposition 2.2]{LPT13}, the maximal destabilizing subsheaf of $T_{\mathcal{F}}$ is involutive and defines a subfoliation $\mathcal{G} \subset \mathcal{F}$. According to \cite[Lemma 3.1]{LPT13}, $\mathcal{G}$ is a codimension-two foliation defined by a $2$-form $\eta \in H^0(X, \Omega^2_X(k))$ for $k \le 2$, which contradicts our hypothesis. 
\end{proof}

\subsection{Foliations on adjoint varieties with Picard number one}\label{sec4-picard1}
In this section, we study codimension-one foliations on adjoint varieties of Picard number one. The following example shows that not every adjoint variety of Picard number one is covered by lines. Consequently, Proposition~\ref{P: pencil} does not apply in this setting, making the problem more subtle.

\begin{example}
The adjoint variety associated with the Lie algebra $\mathfrak{sp}(2n)$ is the projective space $\mathbb{P}^{2n-1}$, embedded via the second Veronese map 
\[
\nu_2 \colon \mathbb{P}^{2n-1} \hookrightarrow \mathbb{P}(\mathfrak{sp}(2n)),
\]
for $n \ge 3$. The pullback, via the Veronese embedding, of a codimension-one foliation of degree zero on the ambient space is a codimension-one foliation of degree two on $\mathbb{P}^{2n-1}$.
Note that the space of codimension-one foliations of degree two on $\mathbb{P}^{2n-1}$ has six irreducible components, whereas the space of codimension-one foliations of degree zero on $\mathbb{P}(\mathfrak{sp}(2n))$ has only one. Thus, not every foliation on $\mathbb{P}^{2n-1}$ arises as the pullback of a degree-zero codimension-one foliation on $\mathbb{P}(\mathfrak{sp}(2n))$ under the adjoint embedding.
\end{example}

Note that if $X$ is an adjoint variety with Picard number one and is not isomorphic to the projective space $\mathbb{P}^n$, then $X$ is covered by lines under the adjoint embedding (see \cite[Section 2.3]{kebekus2000uniqueness}). We now show that such varieties satisfy the hypotheses of Proposition \ref{P: pencil}.

\begin{proposition}\label{P: adjoint H^0}
Let $X$ be an adjoint variety of Picard number one not isomorphic to projective space. Then there are no integrable $2$-forms in $H^0(X, \Omega^2_X(k))$ for $k \le 2$.
\end{proposition}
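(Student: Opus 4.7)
The strategy is to compute $H^0(X,\Omega^2_X(k))$ for $k\le 2$ by combining the contact filtration with the Bott-Borel-Weil theorem, and then to rule out integrability for any surviving sections by exploiting the contact condition.

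From the contact distribution sequence $0\to \mathcal{D}\to T_X\to \mathcal{L}\to 0$, dualize and take the second exterior power to obtain
\[
0\longrightarrow \mathcal{L}^{-1}\otimes \mathcal{D}^*\longrightarrow \Omega^2_X\longrightarrow \wedge^2\mathcal{D}^*\longrightarrow 0.
\]
Using the contact isomorphism $\mathcal{D}^*\cong \mathcal{D}\otimes \mathcal{L}^{-1}$ and twisting by $\mathcal{L}^k$, the graded pieces of $\Omega^2_X(k)$ are identified with the (completely reducible) homogeneous bundles $\mathcal{D}\otimes \mathcal{L}^{k-2}$ and $\wedge^2\mathcal{D}\otimes \mathcal{L}^{k-2}$ on $X=G/P$. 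Decompose each into irreducible $P$-representations and apply Bott-Borel-Weil to compute $H^0$ and $H^1$, arguing case-by-case for the orthogonal Grassmannians $OG(2,2n+1)$ and $OG(2,2n)$ with $n\ge 4$ and for the five exceptional adjoint varieties of types $E_6,E_7,E_8,F_4,G_2$.

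For $k=0$, Hodge symmetry on the rational homogeneous variety gives $H^0(X,\Omega^2_X)=H^{2,0}(X)=0$, so there is nothing to check. For $k=1$, the Bott-Borel-Weil computation identifies $H^0(X,\Omega^2_X(1))$ as the one-dimensional space spanned by (a lift of) the contact curvature $d\theta$; because the contact condition $\theta\wedge (d\theta)^m\neq 0$ forces $(d\theta)^{\wedge 2}\neq 0$ at a generic point (recall $m\ge 2$ since $\dim X\ge 5$), the form $d\theta$ is not locally decomposable and hence defines no codimension two foliation. For $k=2$, the decomposition produces a finite-dimensional $G$-representation whose elements take the shape $\theta\wedge \alpha + h\,d\theta + \beta$ for $\alpha\in H^0(X,\Omega^1_X(1))$, $h\in H^0(X,\mathcal{O}_X(1))$, and $\beta$ coming from the contribution of $\wedge^2\mathcal{D}$; for each kind of term one verifies directly that either $\eta\wedge\eta\neq 0$ (failure of local decomposability) or $d\eta\wedge\eta\neq 0$ (failure of Frobenius) at a generic point, again by invoking the non-degeneracy of $d\theta$ on $\mathcal{D}$.

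The main obstacle will be the case-by-case Bott-Borel-Weil computations for the exceptional adjoint varieties, where the semisimple part of the parabolic subgroup involves nontrivial exceptional representation theory; a secondary obstacle is, within the $k=2$ case, ensuring the list of irreducible pieces contributing to $H^0(X,\Omega^2_X(2))$ is exhaustive before the integrability check can be concluded on each of them. A uniform, structural argument using only the contact form would be preferable, but whether it can fully replace the case analysis is not obvious.
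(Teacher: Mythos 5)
Your overall strategy matches the paper's: the contact filtration $0\to \mathcal{D}^\vee(-1)\to\Omega^2_X\to\bigwedge^2\mathcal{D}^\vee\to 0$, the isomorphism $\mathcal{D}\cong\mathcal{D}^\vee\otimes\mathcal{L}$, a case-by-case Bott--Borel--Weil computation over $B_n$, $D_n$ and the five exceptional types, and the non-degeneracy of $d\theta$ on $\mathcal{D}$ to kill local decomposability. However, there are two points where your outline diverges from what actually happens, one of them a genuine error.

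First, your claim that $H^0(X,\Omega^2_X(1))$ is one-dimensional and spanned by a lift of $d\theta$ is incorrect: the paper computes $H^0(X,\Omega^2_X(1))=0$. The point is that $\psi_\theta(u,v)=\theta([u,v])$ is a section only of $\bigwedge^2\mathcal{D}^\vee(1)$, and the obstruction to lifting it to a global section of $\Omega^2_X(1)$ is its image under the connecting map into $H^1(X,\mathcal{D}^\vee)\cong H^1(X,\Omega^1_X)\cong\mathbb{C}$, which is nonzero (it is essentially $c_1(\mathcal{L})$). So there is nothing to check at $k=1$; your decomposability argument there is addressing a space that is actually zero. The conclusion of the proposition is unaffected, but the intermediate cohomological claim would not survive the computation you propose to carry out.

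Second, for $k=2$ your plan to examine sections of the shape $\theta\wedge\alpha+h\,d\theta+\beta$ term by term leaves open exactly the step you flag as an obstacle, and that step is the substance of the proof. The paper's case analysis establishes that the restriction map $H^0(X,\Omega^2_X(2))\to H^0(X,\bigwedge^2\mathcal{D}^\vee(2))$ is an isomorphism and that the target equals $H^0(X,\mathcal{O}_X(1))\cdot\psi_\theta$; in each Lie type the other summand of $\bigwedge^2\mathcal{D}^\vee(2)$ is an irreducible bundle $E_\mu$ with no sections. Once this is known, every candidate $2$-form restricts on $\mathcal{D}$ to $h\cdot\psi_\theta$, and a single argument (the contraction map $u\mapsto u\lrcorner\psi_\theta$ is injective on $\mathcal{D}$, so the kernel has rank at most one, far below the rank $n-2$ required of a decomposable $2$-form) finishes all cases uniformly; no separate Frobenius check $d\eta\wedge\eta\neq 0$ is needed. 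Without that identification your integrability verification cannot even begin, so as written the proposal is a correct plan with its central computation deferred rather than a proof.
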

\begin{proof}
Let $X$ be an adjoint variety associated to the simple Lie algebra $\mathfrak{g}$ and suppose that $X$ has Picard number one. Consider the exact sequence associated to the contact structure 
\begin{equation}\label{E:contactdistribuction}   
0 \longrightarrow \mathcal{D} \longrightarrow T_X \longrightarrow \mathcal{L} \longrightarrow 0.
\end{equation}
Here $\mathcal{D}$ is a distribution of rank $2m$ and the map $T_X \longrightarrow \mathcal{L} $ is defined by a contact 1-form $\theta$, see Section \ref{S:contact}.
Since $X$ is not isomorphic to projective space, by \cite[Section 1.4.6]{Hwang01}, the contact line bundle $\mathcal{L}$ is the generator of the Picard group of $X$. From now on we will write $\mathcal{L} = \mathcal{O}_X(1)$.
From (\ref{E:contactdistribuction}), we obtain the exact sequence
\[
0 \longrightarrow \mathcal{D}^\vee(-1) \longrightarrow \Omega^2_X \longrightarrow \bigwedge^2 \mathcal{D}^{\vee} \longrightarrow 0.
\]

Tensoring by $\mathcal{O}_X(k)$, we obtain
\[
\begin{aligned}
(\text{for } k = 1)& \quad 
0 \longrightarrow \mathcal{D}^\vee \longrightarrow \Omega^2_X(1) \longrightarrow \bigwedge^2 \mathcal{D}^\vee(1) \longrightarrow 0, \\
(\text{for } k = 2)& \quad 
0 \longrightarrow \mathcal{D}^\vee(1) \longrightarrow \Omega^2_X(2) \longrightarrow \bigwedge^2 \mathcal{D}^\vee(2) \longrightarrow 0.
\end{aligned}
\]

In \cite[Section 4]{BFM23}, Benedetti, Faenzi, and Marchesi computed the cohomologies of $\bigwedge^i \mathcal{D}^\vee$ and $\bigwedge^i \mathcal{D}^\vee(1)$. Applying these results, we get $H^0(X, \mathcal{D}^\vee) = 0$, $H^1(X, \mathcal{D}^\vee) = H^1(X, \Omega^1_X) \cong \mathbb{C}$, and 
\[
H^0\!\left(X, \bigwedge^2 \mathcal{D}^\vee(1)\right) \cong H^0(X, \mathcal{O}_X) \cong \mathbb{C}.
\]
Hence, we conclude that $H^0(X, \Omega_X^2(1)) = 0$.
Now we want to compute $H^0(X, \Omega^2_X(2))$. Applying \cite[Proposition 4.13--Proposition 4.16]{BFM23}, we obtain an isomorphism 
\[
H^0(X, \Omega^2_X(2)) \cong H^0\!\left(X, \bigwedge^2 \mathcal{D}^\vee(2)\right)
\]
for all adjoint varieties. The next step is to study the homogeneous vector bundle $\bigwedge^2 \mathcal{D}^\vee(2)$ case by case, using the Bott-Borel-Weil Theorem. Throughout the computations, we use Bourbaki's numbering of roots; see \cite{Bou}.

Since $H^0(X, T_X) = H^0(X, \mathcal{L}) = \mathfrak{g}$, the weight corresponding to the homogeneous line bundle $\mathcal{L}$ is the highest weight $\lambda_i$ of the Lie algebra (e.g., $\lambda_2$ for $D_n$; see \cite{Kaji-99} for the highest weights of each simple Lie algebra). Then the weight of $\mathcal{D}$ is $s_i(\lambda_i) = \lambda_i - \alpha_i$, where $s_i$ is the reflection in the Weyl group corresponding to the root $\alpha_i$. 
In the case of the adjoint variety associated to the Lie algebra of type $D_n$, we obtain that the weight corresponding to the homogeneous vector bundle $\mathcal{D}$ is $\lambda_1 - \lambda_2 + \lambda_3$, see \cite[Section 6.2]{BFM}. Then one can use the isomorphism $\mathcal{D} \cong \mathcal{D}^\vee(1)$ to determine the weight of $\mathcal{D}^\vee$.

\begin{enumerate}
    \item First, let $X$ be the adjoint variety associated with the simple Lie algebras of type $B_n$ and $D_n$. 
According to \cite[Section 6.2]{BFM}, the corresponding weight of $\mathcal{D}^\vee$ is $\lambda_1 - 2\lambda_2 + \lambda_3$, which is not dominant. 
Now observe that
\begin{align*}
    \bigwedge^2 \mathcal{D}^\vee &= \bigwedge^2 E_{\lambda_1 + \lambda_3}(-4) \\
    &= \left( \bigwedge^2 E_{\lambda_1} \otimes S^2 E_{\lambda_3} \oplus S^2 E_{\lambda_1} \otimes \bigwedge^2 E_{\lambda_3} \right) \otimes \mathcal{O}_X(-4).
\end{align*}

Using that $\bigwedge^2 E_{\lambda_1} \cong \mathcal{O}_X(1)$ and $S^2 E_{\lambda_3} = E_{2\lambda_3} \oplus \mathcal{O}_X(2)$, we obtain
\(
    \bigwedge^2 \mathcal{D}^\vee \cong E_{2\lambda_1 + 2\lambda_4}(-4) \oplus E_{2\lambda_3}(-3) \oplus \mathcal{O}_X(-1),
\)
and then
\begin{equation*}
    \bigwedge^2 \mathcal{D}^\vee(2) \cong E_{2\lambda_1 + 2\lambda_4}(-2) \oplus E_{2\lambda_3}(-1) \oplus \mathcal{O}_X(1).
\end{equation*}

\item Let $X$ be the adjoint variety associated with the simple Lie algebra of type $E_6$. In the notation of Section \ref{S: BBW theorem e hom vec bndles}, $X = E_6/P(\alpha_2)$. The highest weight of the Lie algebra is $\lambda_2$, and according to \cite[Section 4.4.3]{BFM23} the corresponding highest weight associated to $\mathcal{D}$ is $-2\lambda_2 + \lambda_4$. 
Then one can check the weights of $\bigwedge^2 \mathcal{D} \cong \bigwedge^2 E_{-2\lambda_2 + \lambda_4}$ (for instance, using LiE \cite{vLCL92}) as follows. 

First, identify the highest root of the Lie algebra, which in the case of $X = E_6/P(\alpha_2)$ is $\alpha_2$, and construct a Dynkin diagram $D$ by removing the node corresponding to this root. For instance, for $X = E_6/P(\alpha_2)$, we obtain the following Dynkin diagrams:
 
\[
\begin{tikzcd}
    {\circ_1} & {\circ_3} & {\circ_4} & {\circ_5} & {\circ_6} & {E_6} \\
    && {\,\,\bullet_2} \\
    {\circ_1} & {\circ_2} & {\circ_3} & {\circ_4} & {\circ_5} & {A_5}
    \arrow[no head, from=1-1, to=1-2]
    \arrow[no head, from=1-2, to=1-3]
    \arrow[no head, from=1-3, to=1-4]
    \arrow[no head, from=1-3, to=2-3]
    \arrow[no head, from=1-4, to=1-5]
    \arrow[squiggly, from=2-3, to=3-3]
    \arrow[no head, from=3-1, to=3-2]
    \arrow[no head, from=3-2, to=3-3]
    \arrow[no head, from=3-3, to=3-4]
    \arrow[no head, from=3-4, to=3-5]
\end{tikzcd}
\]
The resulting diagram $D$ is the Dynkin diagram of the Levi subgroup $L$ of $P(\alpha_2)$, so $\operatorname{Lie}(L)$ is semisimple (for more details see \cite[Propositions 10.8 and 10.9]{ottaviani}). A homogeneous bundle $E_\lambda$ on $X$ corresponds to an irreducible representation of $L$ whose highest weight we denote by $\mu$, and we denote this representation by $E_\mu$. Here, for $\mathfrak{g}$ of type $E_6$, $E_{\lambda_4}$ corresponds to $F_{\mu_3}$. Then we decompose  
\[
\bigwedge^2 F_{\mu_3} = F_{\mu_2 + \mu_4} \oplus \mathbb{C},
\]  
as a representation of the Lie algebra $\operatorname{Lie}(L)$ (which is of type $A_5$ in the case of $X = E_6/P(\alpha_2)$).  
Hence,  
\[
\bigwedge^2 E_{\lambda_4} = E_{\lambda_3 + \lambda_5}(a) \oplus \mathcal{O}_X(b).
\]  
The values of $a$ and $b$ can be obtained by computing the first Chern class of $\bigwedge^2 V_{\lambda_4}$ and from the fact that $\bigwedge^2 V_{\lambda_4}$ must contain $H^0(X, E_{\lambda_3 + \lambda_5}(a)) \oplus H^0(X, \mathcal{O}_X(b))$.
From this process, we obtain 
\(
\bigwedge^2 E_{-2\lambda_2 + \lambda_4} \cong E_{-3\lambda_2 + \lambda_3 + \lambda_5} \oplus \mathcal{O}_X(-1).
\)
Hence, using that $\mathcal{D}^\vee \cong E_{-2\lambda_1 + \lambda_3}$, we obtain
\[
\bigwedge^2 \mathcal{D}^\vee(2) \cong E_{-\lambda_2 + \lambda_3 + \lambda_5} \oplus \mathcal{O}_X(1).
\]
In each of the cases below, we follow the same approach as for $E_6$.

\item Let $X$ be the adjoint variety associated with the simple Lie algebra of type $E_7$. Similarly to the case of $E_6$, the highest weight of the Lie algebra is $\lambda_1$, and according to \cite[Section 4.4.4]{BFM23} the corresponding highest weight associated to $\mathcal{D}$ is $-2\lambda_1 + \lambda_3$. 
One can check that
\(
\bigwedge^2 E_{-2\lambda_1 + \lambda_3} \cong E_{\lambda_4 - 3\lambda_1} \oplus \mathcal{O}_X(-1).
\)
Hence, using that $\mathcal{D}^\vee \cong E_{-2\lambda_1 + \lambda_3}$, we obtain
\[
\bigwedge^2 \mathcal{D}^\vee(2) = E_{\lambda_4 - \lambda_1} \oplus \mathcal{O}_X(1).
\]

\item Let $X$ be the adjoint variety associated with the simple Lie algebra of type $E_8$. The highest weight of the Lie algebra is $\lambda_8$, and according to \cite[Section 4.4.5]{BFM23} the corresponding highest weight associated to $\mathcal{D}$ is $-2\lambda_8 + \lambda_7$.
One can check that
\(
\bigwedge^2 E_{-2\lambda_8 + \lambda_7} \cong E_{\lambda_6 - 3\lambda_1} \oplus \mathcal{O}_X(-1).
\)
Hence, using that $\mathcal{D}^\vee \cong E_{-2\lambda_8 + \lambda_7}$, we obtain
\[
\bigwedge^2 \mathcal{D}^\vee(2) \cong E_{\lambda_6 - \lambda_1} \oplus \mathcal{O}_X(1).
\]

\item Let $X$ be the adjoint variety associated with the simple Lie algebra of type $F_4$. The highest weight of the Lie algebra is $\lambda_1$, and according to \cite[Section 4.4.2]{BFM23} the corresponding highest weight associated to $\mathcal{D}$ is $-2\lambda_1 + \lambda_2$.
One can check that
\(
\bigwedge^2 E_{-2\lambda_1 + \lambda_2} \cong E_{-3\lambda_1 + 2\lambda_3} \oplus \mathcal{O}_X(-1).
\)
Hence, using that $\mathcal{D}^\vee \cong E_{-2\lambda_1 + \lambda_3}$, we obtain
\[
\bigwedge^2 \mathcal{D}^\vee(2) \cong E_{-\lambda_1 + 2\lambda_3} \oplus \mathcal{O}_X(1).
\]

\item Let $X$ be the adjoint variety associated with the simple Lie algebra of type $G_2$. The highest weight of the Lie algebra is $\lambda_2$, the vector bundle $\mathcal{O}_X(1)$ corresponds to the weight $\lambda_1$, and according to \cite[Section 4.4.1]{BFM23} the corresponding highest weight associated to $\mathcal{D}$ is $3\lambda_1 - 2\lambda_2$.
One can check that
\(
\bigwedge^2 E_{3\lambda_1 - 2\lambda_2} \cong E_{4\lambda_1 - 3\lambda_2} \oplus \mathcal{O}_X(-1).
\)
Hence, using that $\mathcal{D}^\vee = E_{3\lambda_1 - 2\lambda_2}$, we obtain
\[
\bigwedge^2 \mathcal{D}^\vee(2) \cong E_{4\lambda_1 - \lambda_2} \oplus \mathcal{O}_X(1).
\]

\end{enumerate}

Therefore, using the isomorphism $H^0(X, \Omega^2_X(2)) \cong H^0\!\left(X, \bigwedge^2 \mathcal{D}^\vee(2)\right)$ and applying the Bott--Borel--Weil theorem (Theorem~\ref{T:BBW}), we conclude that for every adjoint variety $X$ with Picard number one and $X \not\cong \mathbb{P}^n$, there exists an isomorphism
\[
H^0\!\left(X, \bigwedge^2 \mathcal{D}^\vee(2)\right) \cong H^0(X, \mathcal{O}_X(1)).
\]
More concretely, this can be described as
\[
\phi \colon H^0(X, \Omega^2_X(2)) \longrightarrow H^0\!\left(X, \bigwedge^2 \mathcal{D}^\vee(2)\right) \cong H^0(X, \mathcal{O}_X(1)) \cdot \gamma,
\]
where $\gamma$ is a nonzero section of $H^0\!\left(X, \bigwedge^2 \mathcal{D}^\vee(1)\right) \cong \mathbb{C}$. Let $\psi_\theta \in H^0\!\left(X, \bigwedge^2 \mathcal{D}^\vee(1)\right)$ be given by $\psi_\theta(u, v) = \theta([u, v])$ for $u, v$ sections of $T_X$, where $\theta$ is the contact $1$-form on $X$.
Note that the $2$-form $\psi_\theta$ is not locally decomposable. Indeed, $\psi_\theta$ is locally decomposable if and only if $\mathcal{K} := \ker(\iota_{\psi_\theta})$ has rank two, where the map $\iota_{\psi_\theta}$ is given by
\[
\iota_{\psi_\theta} \colon T_X \longrightarrow \Omega_X^1(1), \qquad u \longmapsto u \,\iota\, \psi_\theta = \theta([u, \cdot]).
\]

Since $\mathcal{D}$ is the contact distribution, $\iota_{\psi_\theta}|_{\mathcal{D}}$ is injective, and then $\mathcal{D} \cap \mathcal{K} = \{0\}$. As $\operatorname{rank}(\mathcal{D}) = \dim(X) - 1$, this implies that $\operatorname{rank}(\mathcal{K}) \le 1$. Thus, $\psi_\theta$ is not locally decomposable.
Note that the isomorphism $\phi$ is given by the wedge product of the dualization of the inclusion $\mathcal{D} \longrightarrow T_X$.
Consequently, the image under $\phi$ is given by its restriction to $\bigwedge^2 \mathcal{D}^\vee \otimes \mathcal{O}_X(2)$.

Now, suppose that a $2$-form $\eta \in H^0(X, \Omega^2_X(2))$ is integrable. In particular, this means that $\eta$ is locally decomposable, that is, locally we can write $\eta = \eta_1 \wedge \eta_2$, where $\eta_i$, $i = 1, 2$, are $1$-forms defined on an open set of $X$. Applying $\phi$, we have $\phi(\eta) = \eta_1' \wedge \eta_2'$, where $\eta_i'$ is the restriction of $\eta_i$ to $\mathcal{D}^\vee \otimes \mathcal{O}_X(1)$. This would imply that a $2$-form in $H^0\!\left(X, \bigwedge^2 \mathcal{D}^\vee(2)\right)$ is locally decomposable, which contradicts the fact that $\iota_{\psi_\theta}$ is not locally decomposable.
\end{proof}

\begin{corollary}\label{corollary one comp}
Let $\mathcal{F}$ be a codimension-one foliation on an adjoint variety $X$ with Picard number one, where $X \not\cong \mathbb{P}^n$. Let $H$ be a family of lines in $X$. Suppose that $\deg_H \mathcal{F} = 0$. Then $\mathcal{F}$ is induced by a pencil of hyperplane sections under the minimal embedding. Hence, the space of codimension-one foliations on $X$ of degree $0$ with respect to $H$ has exactly one irreducible component.
\end{corollary}

\begin{proof}
By Proposition \ref{P: adjoint H^0}, $H^0(X, \Omega^2_X(k))$ has no integrable $2$-forms for $k \le 2$. 
Since $X$ is covered by lines (see \cite{kebekus2000uniqueness, Kebekus2003LinesOC}), Proposition \ref{P: pencil} implies that every degree-zero codimension-one foliation on $X$ is the restriction under the minimal embedding $i\colon X\hookrightarrow\mathbb{P}^N$ of a pencil of hyperplane sections from the ambient projective space $\mathbb{P}^N$. 
The space of such foliations on $\mathbb{P}^N$ is identified with the Grassmannian of lines in the dual projective space (see \cite[Theorem 4.3]{ACM:FanoDist}), hence irreducible. The result follows.
\end{proof}

\begin{remark}
Let $X = \operatorname{OG}(2, n+1)$ be the orthogonal Grassmannian embedded in the projective space $\mathbb{P}^N$ by the Plücker embedding.
In \cite[Section 6]{BFM}, Benedetti, Faenzi, and Muniz proved that the ideals defining the space of codimension-one foliations of degree zero with respect to a family of lines on $X$ and the space of codimension-one foliations of degree zero on $\mathbb{P}^N$ are different. However, Corollary \ref{corollary one comp} implies that every codimension-one foliation on $X$ of degree zero with respect to a family of lines is induced by a degree-zero foliation on $\mathbb{P}^N$.
\end{remark}\section{Foliations on adjoint varieties with Picard number two}\label{sec5-Picard2}
In this section, we consider the adjoint variety of type $A_n$. Recall from Section~\ref{E:adjoint of classical} that $X = X(\mathfrak{sl}(n+1))$ is a smooth hyperplane section of $\mathbb{P}^n \times \mathbb{P}^n \subset \mathbb{P}(\mathfrak{sl}(n+1))$, and that $X$ is equipped with two natural projections:
\[
\begin{tikzcd}[column sep=scriptsize]
	& X \\
	\mathbb{P}^n & {} & \mathbb{P}^n
	\arrow["{\pi_1}"{description}, from=1-2, to=2-1]
	\arrow["{\pi_2}"{description}, from=1-2, to=2-3]
\end{tikzcd}
\]

There are two minimal dominating families of rational curves on $X$:
\begin{enumerate}
    \item The first consists of lines contained in the fibers of $\pi_1$, denoted by $H_1$.
    \item The second consists of lines contained in the fibers of $\pi_2$, denoted by $H_2$.
\end{enumerate}

Our goal is to study codimension-one foliations of minimal degree with respect to these two families.

\subsection{Foliations tangent to every curve in a family of rational curves}

Let $X = X(\mathfrak{sl}(n+1))$ be a smooth hyperplane section of $\mathbb{P}^n \times \mathbb{P}^n$. Using the notation introduced above, we obtain the following result.

\begin{proposition}\label{deg infinito}
Let $\mathcal{F}$ be a codimension-one foliation on $X$. If $\deg_{H_1}(\mathcal{F}) = -\infty$, then $\mathcal{F}$ is the pullback via $\pi_1$ of a codimension-one foliation on $\mathbb{P}^n$ of degree $\deg_{H_2}(\mathcal{F})$.
\end{proposition}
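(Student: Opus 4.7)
The plan is to translate the hypothesis $\deg_{H_1}\mathcal{F}=-\infty$ into an inclusion of the relative tangent sheaf into $T_\mathcal{F}$, conclude from this that $\mathcal{F}$ descends through $\pi_1$, and then compute its degree with respect to $H_2$ directly using the projection formula.

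First I would check the hypotheses of Lemma \ref{L:ratconneted} for the family $H_1$. Since each curve in $H_1$ is a line in a $\mathbb{P}^{n-1}$-fiber of $\pi_1$, the family $H_1$ is proper, hence unsplit. The $H_1$-rationally connected quotient of $X$ coincides with $\pi_1\colon X\to\mathbb{P}^n$: two points of $X$ are $H_1$-equivalent if and only if they lie in the same fiber of $\pi_1$, since each fiber is a projective space $\mathbb{P}^{n-1}$ in which any two points are joined by a line from $H_1$. The assumption $\deg_{H_1}\mathcal{F}=-\infty$ means that for a general $[f]\in H_1$ one has $T_{\mathbb{P}^1}\subset f^*T_\mathcal{F}$, so Lemma \ref{L:ratconneted} yields the saturated inclusion
\[
T_{X/\mathbb{P}^n}\;\subset\; T_\mathcal{F}.
\]

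Next I would argue that this inclusion forces $\mathcal{F}$ to be a pullback via $\pi_1$. The codimension one foliation $\mathcal{F}$ is given by a surjection $\omega\colon T_X\twoheadrightarrow N_\mathcal{F}$, and the inclusion above means $\omega$ kills $T_{X/\mathbb{P}^n}$, hence factors through the relative cotangent sequence to yield a surjection $\pi_1^*T_{\mathbb{P}^n}\twoheadrightarrow N_\mathcal{F}$. Since the fibers of $\pi_1$ are copies of $\mathbb{P}^{n-1}$ and the restriction of $\mathcal{F}$ to each fiber is the trivial rank zero foliation (all tangent directions of the fiber lie in $T_\mathcal{F}$), the line bundle $N_\mathcal{F}$ restricts trivially to every fiber. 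Cohomology and base change then give $N_\mathcal{F}\cong \pi_1^*M$ for some line bundle $M$ on $\mathbb{P}^n$, and pushing the factored map forward produces a surjection $T_{\mathbb{P}^n}\twoheadrightarrow M$ whose kernel is a rank $(n-1)$ saturated subsheaf of $T_{\mathbb{P}^n}$; involutivity is inherited from $\mathcal{F}$ by Frobenius integrability. This defines a codimension one foliation $\mathcal{G}$ on $\mathbb{P}^n$ with $\mathcal{F}=\pi_1^*\mathcal{G}$.

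Finally, to match degrees I would pick a general line $\ell\in H_2$. Since $\ell$ is contained in a fiber of $\pi_2$, the map $\pi_1|_\ell\colon \ell \xrightarrow{\sim}\pi_1(\ell)=L$ is an isomorphism onto a general line $L\subset \mathbb{P}^n$. Because $\mathcal{F}=\pi_1^*\mathcal{G}$ and $L$ is transverse to the fibers of $\pi_1$, a point $p\in \ell$ is a tangency of $\mathcal{F}$ with $\ell$ if and only if $\pi_1(p)$ is a tangency of $\mathcal{G}$ with $L$; by the projection formula,
\[
\deg_{H_2}\mathcal{F}=N_\mathcal{F}\cdot \ell-2=(\pi_1^*N_\mathcal{G})\cdot \ell-2=N_\mathcal{G}\cdot L-2=\deg\mathcal{G}.
\]
The main obstacle I anticipate is the descent step: verifying cleanly that $N_\mathcal{F}$ is a pullback from $\mathbb{P}^n$, and that the induced subsheaf on $\mathbb{P}^n$ is saturated and involutive. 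This uses crucially that $\pi_1$ has connected rational fibers and that $\mathcal{F}$ satisfies Frobenius integrability, so the codimension one distribution on $\mathbb{P}^n$ obtained by descent is automatically a foliation.
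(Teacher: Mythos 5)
Your overall route is the same as the paper's: use $\deg_{H_1}\mathcal{F}=-\infty$ to get $T_{X/\mathbb{P}^n}\subset T_\mathcal{F}$, descend $\mathcal{F}$ through $\pi_1$, and compare tangencies along a general $\ell_2\in H_2$, which $\pi_1$ maps isomorphically onto a general line of $\mathbb{P}^n$. The first step (via Lemma \ref{L:ratconneted}, after checking that $H_1$ is unsplit and that the $H_1$-rationally connected quotient is $\pi_1$) and the final degree comparison are correct and match the paper, which in fact simply asserts the descent.

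The gap is in your justification of the descent, specifically the sentence claiming that because all tangent directions of a fiber $F$ lie in $T_\mathcal{F}$, the line bundle $N_\mathcal{F}$ restricts trivially to $F$. That implication is false for a mere distribution: the inclusion $T_{X/\mathbb{P}^n}\subset\ker\omega$ only gives a generically surjective map $N_{F/X}\cong\mathcal{O}_F^{\oplus n}\to N_\mathcal{F}|_F\cong\mathcal{O}_F(b)$, hence $b\ge 0$, not $b=0$. Locally, $\omega=u_1\,dz_1+u_2\,dz_2$ (with $z$ base coordinates and $u$ fiber coordinates) annihilates the vertical directions yet its coefficients vary along the fibers, so the corresponding twist is nontrivial on fibers; what rules this out is precisely $\omega\wedge d\omega=0$, which forces the ratios $a_i/a_j$ of the coefficients of $\omega=\sum a_i\,dz_i$ to be constant along the connected fibers. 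In other words, the triviality of $N_\mathcal{F}|_F$ is a \emph{consequence} of the descent, which must be established first using integrability together with the connectedness of the fibers of $\pi_1$; you only invoke Frobenius afterwards and in passing. Reordering the argument this way (Frobenius plus connected fibers $\Rightarrow$ the leaf directions are constant along fibers $\Rightarrow$ $\mathcal{F}=\pi_1^*\mathcal{G}$, whence $N_\mathcal{F}=\pi_1^*N_\mathcal{G}$) repairs the proof; the rest of your argument, including the identification $\deg_{H_2}\mathcal{F}=N_\mathcal{G}\cdot L-2=\deg\mathcal{G}$, then goes through as written.
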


\begin{proof}
By definition, $\deg_{H_1}(\mathcal{F}) = -\infty$ implies that for a general $\ell_1 \in H_1$, the line $\ell_1$ is contained in a leaf of the foliation $\mathcal{F}$. Therefore, the relative tangent sheaf $T_{\pi_1}$ is contained in the tangent sheaf $T_{\mathcal{F}}$. Hence, there exists a foliation $\mathcal{G}$ on $\mathbb{P}^n$ such that $\mathcal{F} = \pi_1^*\mathcal{G}$ (see \cite[Section~2.3 and Lemma~2.4]{loray-pereira-touzet:trivial}).

We now prove that the degree of $\mathcal{G}$ is equal to $\deg_{H_2}(\mathcal{F})$. Let $\ell \subset \mathbb{P}^n$ be a general line. Since $\operatorname{codim} \operatorname{Sing}(\mathcal{G}) \ge 2$, it follows from \cite[II, Proposition~3.7]{kollar:rational-on-algebraic} that $\ell$ does not intersect the singular locus of $\mathcal{G}$. Moreover, there exists a line $\ell_2 \in H_2$ such that $\pi_1(\ell_2) = \ell$. By the generality of $\ell$, we may assume that $\ell_2$ is also general. Since $\mathcal{F} = \pi_1^*\mathcal{G}$, the degree of $\mathcal{G}$, which equals the number of tangencies of $\ell$ with $\mathcal{G}$, coincides with the number of tangencies of $\mathcal{F}$ with the line $\ell_2$. Therefore, $\deg(\mathcal{G}) = \deg_{H_2}(\mathcal{F})$.
\end{proof}

Similarly, if $\deg_{H_2} \mathcal{F} = -\infty$, then $\mathcal{F}$ is the pullback via $\pi_2$ of a codimension-one foliation on $\mathbb{P}^n$ of degree equal to $\deg_{H_1} \mathcal{F}$.

\begin{remark}
There is no codimension-one foliation $\mathcal{F}$ on $X$ such that $\deg_{H_i}(\mathcal{F}) = -\infty$ for both $i = 1,2$. Indeed, suppose that such a foliation exists on $X$. Then, by applying Proposition~\ref{deg infinito}, we conclude that there exists a codimension-one foliation on $\mathbb{P}^n$ of degree $-\infty$, which contradicts Proposition~\ref{P:deg-maior-que-0}.
\end{remark}

\subsection{Foliations with degree zero with respect to both families of rational curves}
Let $X$ be a smooth hyperplane section of $\mathbb{P}^n \times \mathbb{P}^n$. Our goal in this section is to classify codimension-one foliations on $X$ of degree zero with respect to both minimal families of rational curves. Observe that, according to Equation~\ref{E:normal degree}, the normal bundle of such foliations is
\[
\mathcal{N} \cong \mathcal{O}_X(2h_1 + 2h_2) \cong \mathcal{O}_X(2),
\]
where each $h_i$ is the pullback of the hyperplane class via $\pi_i$ for $i = 1,2$.

Let $i\colon X \hookrightarrow \mathbb{P}(\mathfrak{sl}(n+1)) = \mathbb{P}^N$ be the adjoint embedding, which is the composition of the inclusion $X \hookrightarrow \mathbb{P}^n \times \mathbb{P}^n$ with the Segre embedding $s\colon \mathbb{P}^n \times \mathbb{P}^n \longrightarrow \mathbb{P}^{N+1}$. Note that the restriction of a degree-zero codimension-one foliation on $\mathbb{P}^N$ gives a codimension-one foliation on $X$ with normal sheaf
\[
i^*\mathcal{O}_{\mathbb{P}^N}(2) = \mathcal{O}_X(2).
\]
Thus, this foliation has degree zero with respect to both families of minimal rational curves on $X$. The natural question is whether all foliations on $X$ of degree zero with respect to both families arise as pullbacks of degree-zero foliations on $\mathbb{P}^N$.

We begin by establishing the following structure result.
\begin{proposition}\label{Picard 2-Structural}
Let $X$ be a smooth hyperplane section of $\mathbb{P}^n \times \mathbb{P}^n$ for $n \ge 3$.
Let $\mathcal{F}$ be a codimension-one foliation on $X$, and denote by $H_1$ and $H_2$ the two families of lines on $X$. Suppose that $\deg_{H_i} \mathcal{F} = 0$ for $i = 1,2$. Then either:
\begin{enumerate}
    \item $\mathcal{F}$ is defined by a closed rational $1$-form without divisorial components in its zero set; or
    \item $\mathcal{F}$ is algebraically integrable.
\end{enumerate}
\end{proposition}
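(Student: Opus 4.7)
The proof proposal is to mirror Proposition~\ref{main}, but now the adjoint variety has Picard number two, so we dispose of two minimal dominating families of rational curves and two associated tangential foliations. For $i=1,2$, let $M_i\subset \mathrm{Mor}(\mathbb{P}^1,X)$ be an irreducible component parametrizing the curves in $H_i$, and let $\mathcal{F}^{(i)}_{\mathrm{tang}}$ denote the tangential foliation induced by $\mathcal{F}$ on $M_i$ (cf.\ Section~\ref{S:Ftang}). Since $\deg_{H_i}\mathcal{F}=0$ for both $i$, Lemma~\ref{L:deg0-fTF} gives the splitting $f^{*}T_\mathcal{F}\cong\mathcal{O}_{\mathbb{P}^1}(1)^{\oplus p}\oplus\mathcal{O}_{\mathbb{P}^1}^{\oplus(n-1-p)}$ along a general free morphism in either family, ensuring in particular that each $\mathcal{F}^{(i)}_{\mathrm{tang}}$ is non-zero.

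First I would dispose of the algebraically integrable case: if either $\mathcal{F}^{(i)}_{\mathrm{tang}}$ is algebraically integrable, apply Lemma~\ref{L:Ftang alg int} (whose hypothesis $\deg_{H_i}\mathcal{F}=0$ is available) to deduce that $\mathcal{F}$ itself is algebraically integrable, giving conclusion~(2). From now on I would assume that neither tangential foliation is algebraically integrable and aim at conclusion~(1) via Theorem~\ref{T: teo C LPT}.

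To invoke Theorem~\ref{T: teo C LPT} with respect to $H_i$, one must check that a general morphism in $M_i$ meets every $\mathcal{F}$-invariant algebraic hypersurface non-trivially and transversely. Transversality follows automatically from $\deg_{H_i}\mathcal{F}=0$, which says that the general $\ell\in H_i$ is everywhere transverse to $\mathcal{F}$, hence transverse to invariant hypersurfaces at their smooth points. For non-triviality, write any invariant hypersurface $V$ as $[V]=ah_1+bh_2$ with $a,b\geq 0$ in $\mathrm{Pic}(X)=\mathbb{Z}h_1\oplus\mathbb{Z}h_2$; then $V\cdot\ell_1=b$ and $V\cdot\ell_2=a$. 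Thus if no invariant hypersurface is of the form $\pi_1^{-1}(W)$, we have $b>0$ for every invariant $V$, and Theorem~\ref{T: teo C LPT} applied to $H_1$ yields conclusion~(1); symmetrically, if no invariant hypersurface is pulled back via $\pi_2$, I would use $H_2$.

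The main obstacle is the remaining configuration, in which there simultaneously exist $\mathcal{F}$-invariant hypersurfaces $V_1=\pi_1^{-1}(W_1)$ of class $d_1h_1$ and $V_2=\pi_2^{-1}(W_2)$ of class $d_2h_2$; then for the general $\ell_i\in H_i$ one has $V_i\cdot\ell_i=0$ and $\ell_i\not\subset V_i$ (otherwise $\deg_{H_i}\mathcal{F}=-\infty$, contradicting Proposition~\ref{deg infinito}), so neither family satisfies the hypothesis of Theorem~\ref{T: teo C LPT}. The plan in this case is to show directly that $\mathcal{F}$ is algebraically integrable, which then supplies conclusion~(2). The key observation is that $V_1$ and $V_2$ are closures of algebraic leaves of $\mathcal{F}$, on each of which the entire fibres of the corresponding projection are tangent to $\mathcal{F}$; together with the normal-bundle constraint $N_\mathcal{F}=\mathcal{O}_X(2)$ forced by $\deg_{H_i}\mathcal{F}=0$ for both $i$, this rigidity is expected to produce a dominant rational map $X\dashrightarrow\mathbb{P}^1$ having $V_1$ and $V_2$ among its fibres. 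This degenerate case, absent from the Picard-rank-one setting of Proposition~\ref{main}, is where the hard work lies.
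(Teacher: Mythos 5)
Your first step (if either tangential foliation is algebraically integrable, conclude via Lemma~\ref{L:Ftang alg int}) is exactly the paper's, and your application of Theorem~\ref{T: teo C LPT} in the sub-case where every $\mathcal{F}$-invariant hypersurface has positive degree on one of the two families is sound. The problem is the remaining configuration, which you correctly isolate but do not resolve — and your proposed resolution is in fact wrong. Take the generic foliation in the component $\mathrm{Log}((1,0)^2,(0,1)^2)$, defined by
\[
\omega \;=\; f_1 g_1 f_2 g_2\left( \lambda_1\Bigl(\frac{dg_1}{g_1}-\frac{df_1}{f_1}\Bigr)+\lambda_2\Bigl(\frac{dg_2}{g_2}-\frac{df_2}{f_2}\Bigr)\right),
\qquad f_i,g_i\in H^0(X,\mathcal{O}_X(h_i)).
\]
It has $\deg_{H_i}\mathcal{F}=0$ for both $i$, it carries invariant hypersurfaces of classes $h_1$ and $h_2$ (so neither family satisfies the non-triviality hypothesis of Theorem~\ref{T: teo C LPT}), and for $\lambda_1/\lambda_2\notin\mathbb{Q}$ it is \emph{not} algebraically integrable. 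So in your "degenerate case" the correct conclusion is alternative~(1), not the algebraic integrability you hope to extract from $V_1$ and $V_2$; no rational first integral exists there, and the claim that the fibres of $\pi_i$ inside $V_i$ are tangent to $\mathcal{F}$ is neither justified nor sufficient. Since this case is non-vacuous and is precisely where one of the two advertised components lives, the argument as proposed cannot be completed along these lines.

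The paper sidesteps Theorem~\ref{T: teo C LPT} entirely in this proposition. Using $X\cong\mathbb{P}\Omega^1_{\mathbb{P}^n}$ and the fact that $\mathcal{F}$ restricts to a pencil of hyperplanes on each fibre $F_i\cong\mathbb{P}^{n-1}$, it invokes \cite[Paragraph 7.8, Proposition 7.10]{AD-fano} to produce, for each $i$, a rank-two bundle $\mathcal{K}_i$ on $\mathbb{P}^n$, a $\mathbb{P}^1$-bundle $Z_i=\mathbb{P}\mathcal{K}_i$, and a rational map $p_i\colon X\dashrightarrow Z_i$ with $\mathcal{F}=p_i^*\mathcal{G}_i$ for a Riccati foliation $\mathcal{G}_i$ on $Z_i$. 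This gives two transversely projective structures on $\mathcal{F}$; they are non-equivalent because they come from distinct fibrations, so Lemma~\ref{L:2transv aff structures} makes $\mathcal{F}$ virtually transversely Euclidean, and then either a direct period computation or \cite[Theorem 4.1, Proposition 4.4]{LPT-def} combined with Lemma~\ref{L:pullback transv euclidean} upgrades this to a closed rational $1$-form; a separate short argument with the tangential foliation rules out divisorial components in its zero set. If you want to salvage your route, you would need to replace the degenerate case with an argument of this transverse-structure type rather than aiming at algebraic integrability.
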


\begin{proof}
Let $\mathcal{F}$ be a codimension-one foliation on $X$ with $\deg_{H_i} \mathcal{F} = 0$ for $i = 1,2$. 
Let $M_i$ be the irreducible component of $\operatorname{Mor}(\mathbb{P}^1, X)$ parametrizing curves in $H_i$, for $i = 1,2$. Let $\mathcal{F}_{\operatorname{tang},i}$ be the tangential foliation of $\mathcal{F}$ on $M_i$, for $i = 1,2$. 
If $\mathcal{F}_{\operatorname{tang},i}$ is algebraically integrable for some $i \in \{1,2\}$, then by Lemma \ref{L:Ftang alg int}, we conclude that $\mathcal{F}$ is algebraically integrable. 

Now suppose that $\mathcal{F}_{\operatorname{tang},i}$ is not algebraically integrable for $i = 1,2$. According to \cite[Theorem B]{LPT-def}, $\mathcal{F}$ admits a transversely projective structure. Our goal now is to describe this transverse structure for $\mathcal{F}$.
Consider the isomorphism $X \cong \mathbb{P}\Omega^1_{\mathbb{P}^n}$, let $\pi_1\colon X \longrightarrow \mathbb{P}^n$ be the natural $\mathbb{P}^{n-1}$-bundle and let $\pi_2\colon X \longrightarrow (\mathbb{P}^n)^* \cong \mathbb{P}^n$ be the projection map that sends a point in the projectivized cotangent bundle to the corresponding dual space, which is identified with $\mathbb{P}^n$.
Let $F_i \cong \mathbb{P}^{n-1}$ be the general fiber of $\pi_i$, for $i = 1,2$.

Since the normal bundle of $\mathcal{F}$ is $\mathcal{O}_X(2,2)$, $T_{\mathcal{F}}|_{F_i} \cap T_{F_i}$ gives a foliation of degree zero on $F_i \cong \mathbb{P}^{n-1}$, that is, a pencil of hyperplanes on $F_i \cong \mathbb{P}^{n-1}$ defined by a linear projection $\pi\colon \mathbb{P}^{n-1} \dashrightarrow \mathbb{P}^1$.
Therefore, by \cite[Paragraph 7.8]{AD-fano}, $\mathcal{F}$ admits the following description. For $i = 1,2$ and for an open subset $\mathcal{U}_i$ of $\mathbb{P}^n$ such that $\operatorname{codim}_{\mathbb{P}^n}(\mathbb{P}^n \setminus \mathcal{U}_i) \ge 2$, there exists a rank $n-2$ subbundle $\mathcal{V}_i \subset T_{\mathcal{U}_i}$ and a rank-$2$ bundle $\mathcal{K}_i$ defined as the kernel of the map ${\Omega^1_{\mathbb{P}^n}}|_{\mathcal{U}_i} \longrightarrow {\mathcal{V}_i^*}|_{\mathcal{U}_i}$. We have the exact sequence over $\mathcal{U}_i$:
\[
0 \longrightarrow {\mathcal{K}_i} \longrightarrow {\Omega^1_{\mathbb{P}^n}}|_{\mathcal{U}_i} \longrightarrow {\mathcal{V}_i^*}|_{\mathcal{U}_i} \longrightarrow 0.
\]

We consider the $\mathbb{P}^1$-bundle $Z_i = \mathbb{P}_{\mathcal{U}_i} \mathcal{K}_i$ over $\mathcal{U}_i$ with a natural projection $q_i\colon Z_i \longrightarrow \mathcal{U}_i$. Consider $\mathcal{U} = \pi_1^{-1}(\mathcal{U}_1) \cap \pi_2^{-1}(\mathcal{U}_2)$, an open subset of $X$. Then for $i = 1,2$, we have a rational map $p_i\colon \mathcal{U} \dashrightarrow Z_i$ (see \cite[Proposition 7.10 and Theorem 9.2]{AD-fano}) fitting into the following diagram:

\[
\begin{tikzcd}
		{Z_1} &  \mathcal{U}& {Z_2} \\
	\mathcal{U}_1 && \mathcal{U}_2
	\arrow["{q_1}"', from=1-1, to=2-1]
	\arrow["{p_1}"', dashed, from=1-2, to=1-1]
	\arrow["{p_2}", dashed, from=1-2, to=1-3]
	\arrow["{\pi_1}", from=1-2, to=2-1]
	\arrow["{\pi_2}"', from=1-2, to=2-3]
	\arrow["{q_2}", from=1-3, to=2-3]
\end{tikzcd}
\]

The foliation $\mathcal{F}$ restricted to $\mathcal{U}$ is the pullback via $p_i$ of a foliation $\mathcal{G}_i$ on $Z_i$, for $i = 1,2$.  
The next step is to show that the transverse structure of $\mathcal{F}$ arises from transverse structures of each $\mathcal{G}_i$ (see Lemma~\ref{L: pullback tansv}), then prove that these structures are not equivalent, and finally apply Lemma~\ref{L:2transv aff structures} to conclude that this transverse structure is transversely Euclidean.  
Hence, $\mathcal{F}$ is defined by a closed rational $1$-form.

Let $\overline{H_i}$ be the family of rational curves on $Z_i$ that are images of general curves from $H_j$ via $p_j$, where $\{i,j\} = \{1,2\}$. Note that these curves are mapped to lines on $\mathbb{P}^n$ contained in $\mathcal{U}_i$ via $q_j$. 
Now, let $\ell_2 \in H_2$ be a general curve in $H_2$, so that the image of $\ell_2$ by $\pi_1$ is a general line on $\mathbb{P}^n$ contained in $\mathcal{U}_i$.
Since the indeterminacy locus of $p_1$ has codimension two, it does not intersect $\ell_2$. Moreover, since $\mathcal{F}$ is regular in a neighborhood of $\ell_2$ and transverse to $\ell_2$, its image $\overline{\ell_2}$ on $Z_1$ satisfies $\overline{\ell_2} \cap \operatorname{Sing}(\mathcal{G}_1) = \emptyset$ and $\overline{\ell_2}$ is transverse to $\mathcal{G}_1$. We can proceed similarly for the foliation $\mathcal{G}_2$ and then conclude that $\mathcal{G}_i$ is a degree-zero foliation with respect to the family $\overline{H_i}$.

Observe that the map $q_i$ is a $\mathbb{P}^1$-bundle over $\mathcal{U}_i$ and since the general fiber of $q_i$ is transverse to $\mathcal{G}_i$, the foliation $\mathcal{G}_i$ is a Riccati foliation. Consequently, $\mathcal{G}_i$ admits a transversely projective structure. Applying Lemma \ref{L: pullback tansv}, we conclude that $\mathcal{F}$ has two transversely projective structures induced by the transverse structures of $\mathcal{G}_i$, for $i = 1,2$.

Since these structures arise from distinct fibrations, they are not equivalent, and Lemma \ref{L:2transv aff structures} implies that $\mathcal{F}$ is virtually transversely Euclidean. Consequently, for a $1$-form $\omega_0$ defining the foliation $\mathcal{F}$, there exists a $1$-form $\omega_1$, where $\omega_1$ is a closed logarithmic $1$-form with periods commensurable to $\pi \sqrt{-1}$. Note that if the periods of $\omega_1$ are integral multiples of $2\pi \sqrt{-1}$, then $\exp \left( \int \omega_1 \right)$ is well-defined globally and 
\[
\exp \left( \int \omega_1 \right) \omega_0
\]
is the desired closed rational $1$-form (see Section \ref{S: tansversality of fol}). 

Assume from now on that the periods of $\omega_1$ are not integral multiples of $2\pi\sqrt{-1}$. According to \cite[Theorem 4.1 and Proposition 4.4]{LPT-def}, the foliation $\operatorname{ev}^* \mathcal{F}$ is defined by a closed rational $1$-form, that is, $\operatorname{ev}^* \mathcal{F}$ is transversely Euclidean. Lemma \ref{L:pullback transv euclidean} implies that $\mathcal{F}$ is transversely Euclidean as well. This concludes the proof that $\mathcal{F}$ is defined by a closed rational $1$-form $\omega$.

Suppose that $\omega$ has a divisorial component in its zero set. Since the restriction $\omega|_{F_i}$ induces a foliation of degree zero on $F_i \cong \mathbb{P}^{n-1}$, such a divisorial component must be of the form $\pi_i^{-1}(D_i)$ for some divisor $D_i \subset \mathbb{P}^n$. However, this cannot happen simultaneously for $i = 1,2$. Therefore, the closed rational $1$-form $\omega$ defining $\mathcal{F}$ does not have any divisorial components in its zero set.
\end{proof}

\begin{proposition}\label{algint-picard2}
Let $X$ be a smooth hyperplane section of $\mathbb{P}^n \times \mathbb{P}^n$ and let $H_1$ and $H_2$ be the two families of lines on $X$.
Let $\mathcal{F}$ be a codimension-one foliation on $X$ with normal sheaf $\mathcal{O}_X(2)$, that is, $\deg_{H_i} \mathcal{F} = 0$ for $i = 1,2$. If $n \ge 3$ and $\mathcal{F}$ is algebraically integrable, then $\mathcal{F}$ is given by a pencil of hyperplane sections. 
\end{proposition}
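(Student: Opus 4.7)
The plan is to mirror the argument from the Picard-number-one setting (the second proposition of Section~\ref{sec4-picard1}), refining it to account for the Picard rank two structure of \(X\). Since \(\mathcal{F}\) is algebraically integrable and \(X\) is a rationally connected Fano variety, its first integral is a dominant rational map \(f \colon X \dashrightarrow \mathbb{P}^1\). I would polarize \(X\) by \(\mathcal{A} = \mathcal{O}_X(1) = \mathcal{O}_X(h_1 + h_2)\) and split the argument according to whether \(T_\mathcal{F}\) is \(\mu_\mathcal{A}\)-semistable.

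Assume first that \(T_\mathcal{F}\) is \(\mu_\mathcal{A}\)-semistable. Then \cite[Theorem 1.2]{LPT13} gives that every fiber of \(f\) is irreducible, and \cite[Theorem 3.3]{LPT13} gives at most two multiple fibers \(F_i = m_i G_i\), \(i = 1, \ldots, k\) with \(k \leq 2\) and \(m_i \geq 2\). Writing \(F\) for the class of a general fiber, I would equate the two expressions
\[
K_\mathcal{F} \;=\; K_X + 2F - \sum_{i=1}^{k}(m_i - 1)G_i \;=\; K_X + N_\mathcal{F} \;=\; K_X + 2(h_1 + h_2)
\]
to obtain
\[
F\Bigl(2 - k + \sum_{i=1}^{k} \tfrac{1}{m_i}\Bigr) \;=\; 2(h_1 + h_2) \qquad \text{in } \operatorname{Pic}(X) \otimes \mathbb{Q}.
\]
Since \(n \geq 3\), Lefschetz gives \(\operatorname{Pic}(X) = \mathbb{Z}h_1 \oplus \mathbb{Z}h_2\); writing \(F = ah_1 + bh_2\) forces \(a = b\) and reduces this to a single rational equation in \(a, k, m_1, m_2\). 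Together with the integrality requirement \(m_i \mid a\) (needed so that \(G_i = F/m_i\) is an honest integer divisor class), a short case analysis leaves only two possibilities: \(k = 0\) with \(F = h_1 + h_2\), or \(k = 2\) with \(m_1 = m_2 = m\), \(F = m(h_1+h_2)\), and \(G_1, G_2 \in |h_1+h_2|\). The latter is ruled out by applying \cite[Proposition 3.5]{LPT13} to either family \(H_1\) or \(H_2\): since \(X\) is covered by \(H_i\) with \(\deg_{H_i}\mathcal{F} = 0\), this forces \(m = 1\). Hence \(F = h_1 + h_2\) with no multiple fibers, and \(\mathcal{F}\) is a pencil of hyperplane sections.

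If instead \(T_\mathcal{F}\) is not \(\mu_\mathcal{A}\)-semistable, its maximal destabilizing subsheaf is involutive by \cite[Proposition 2.2]{LPT13} and yields a codimension-two subfoliation \(\mathcal{G} \subset \mathcal{F}\); by \cite[Lemma 3.1]{LPT13}, \(\mathcal{G}\) is defined by an integrable twisted 2-form \(\eta \in H^0(X, \Omega^2_X(k))\) for some small \(k\). The main obstacle is to rule such 2-forms out. The plan is to compute \(H^0(X, \Omega^2_X(k))\) through the conormal sequence
\[
0 \longrightarrow \mathcal{O}_X(-1) \longrightarrow \Omega^1_{\mathbb{P}^n \times \mathbb{P}^n}\big|_X \longrightarrow \Omega^1_X \longrightarrow 0,
\]
combined with the Künneth decomposition of \(\Omega^\bullet_{\mathbb{P}^n \times \mathbb{P}^n}\) and Bott vanishing on each projective factor, and to check that the only integrable 2-forms in the relevant twists on \(X\) already arise from the pencil of hyperplane sections established in the semistable case. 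Combining the two cases yields the conclusion.
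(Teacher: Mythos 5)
Your proposal takes a completely different route from the paper, and the paper's route is worth recording because it is far shorter and avoids every difficulty you run into. The paper observes that, since $\deg_{H_i}\mathcal{F}=0$, the restriction of $\mathcal{F}$ to a general fiber $F_i\cong\mathbb{P}^{n-1}$ of either projection $\pi_i$ is a degree-zero foliation of $\mathbb{P}^{n-1}$, i.e.\ a pencil of hyperplanes; writing the class of an invariant hypersurface (the closure of a general leaf) as $a h_1+b h_2$ and restricting to $F_1$ and $F_2$ forces $a=b=1$, whence $\mathcal{F}$ is a pencil of hyperplane sections. No stability dichotomy, no multiple-fiber bookkeeping, no cohomology of $\Omega^2_X$. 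Your semistable branch is arithmetically sound as far as it goes (the reduction to $F=m(h_1+h_2)$ with $G_1,G_2\in|h_1+h_2|$ is correct), but you should verify that the results of \cite{LPT13} you import (Theorems 1.2 and 3.3) are stated for arbitrary polarized manifolds rather than only in Picard number one, where the paper uses them.

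The genuine gap is in your non-semistable branch, and it cannot be repaired along the lines you sketch. First, the bound ``$\eta\in H^0(X,\Omega^2_X(k))$ for small $k$'' from \cite[Lemma 3.1]{LPT13} exploits Picard number one to control the determinant of the destabilizing subsheaf; on $X$ you would have to bound a bidegree $(a,b)$, which you do not address. Second, and decisively, the vanishing or triviality you hope to extract from Bott--K\"unneth computations is false: if $\omega_i$ denotes the pullback by $\pi_i$ of a $1$-form on $\mathbb{P}^n$ defining a pencil of hyperplanes, then $\pi_1^*\omega_1\wedge\pi_2^*\omega_2$ is a nonzero integrable section of $\Omega^2_X(2h_1+2h_2)=\Omega^2_X(2)$ that does not arise from any pencil of hyperplane sections of $X$. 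Indeed, the unstable branch genuinely contains algebraically integrable foliations with $N_\mathcal{F}\cong\mathcal{O}_X(2)$ and $\deg_{H_i}\mathcal{F}=0$ whose leaves are \emph{not} hyperplane sections: the member of $\mathrm{Log}((1,0)^2,(0,1)^2)$ with residues $(1:2)$, i.e.\ the foliation with first integral $x_0y_1^2/(x_1y_0^2)$, has irreducible general leaf of class $h_1+2h_2$, and by your own semistable analysis it must fall into the unstable branch. So that branch cannot terminate in the conclusion you want; in the paper such foliations are absorbed by the closed-rational-$1$-form alternative of Proposition \ref{Picard 2-Structural}, and the present proposition is only ever invoked for the remaining case. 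To close your argument you need either the paper's fiberwise restriction argument or an explicit appeal to that dichotomy, not a vanishing statement for twisted $2$-forms.
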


\begin{proof}
Consider again the two natural projections 
\[
\begin{tikzcd}[column sep=scriptsize]
	& X \\
	{\mathbb{P}^n} & {} & {\mathbb{P}^n}
	\arrow["{\pi_1}"{description}, from=1-2, to=2-1]
	\arrow["{\pi_2}"{description}, from=1-2, to=2-3]
\end{tikzcd}
\]
Then $\operatorname{Pic}(X) = \mathbb{Z}[h_1] \oplus \mathbb{Z}[h_2]$, where each $h_i$ is the pullback by $\pi_i$ of the hyperplane class, $i = 1,2$.
Let $\mathcal{F}$ be an algebraically integrable foliation on $X$ with normal sheaf $\mathcal{N}_{\mathcal{F}} \cong \mathcal{O}_X(2)$, and let $Z \subset X$ be an invariant algebraic hypersurface. Then there exist integers $a, b \in \mathbb{Z}$ such that 
\[
[Z] = a[h_1] + b[h_2] \in \operatorname{Pic}(X).
\]

As noted before, the restriction of $\mathcal{F}$ to a general fiber $F_i \cong \mathbb{P}^{n-1}$ of $\pi_i$ (for $i = 1,2$) is a degree-zero foliation of rank $n-2$, and thus $Z \cap F_i$ is a hyperplane section of $\mathbb{P}^{n-1}$; note that here we use that $n \ge 3$.  
Consequently, we obtain $a = b = 1$.  
Since the general leaf of $\mathcal{F}$ is algebraic, it follows that $\mathcal{F}$ is defined by a pencil of hyperplane sections.
\end{proof}

Aiming to study codimension-one foliations on $X$ of degree zero with respect to both families of lines and considering the result of Proposition \ref{Picard 2-Structural}, our next objective is to examine foliations defined by closed rational $1$-forms without divisorial components in their zero sets. Recall from Section \ref{S:logarithmic components} that these foliations are contained in the closure of logarithmic components. This leads to the following result:

\begin{theorem}\label{P: logarithmic on PnxPn}
Let $X$ be a smooth hyperplane section of $\mathbb{P}^n \times \mathbb{P}^n$.  
There are exactly two logarithmic components in the space of codimension-one foliations on $X$ with normal sheaf $N_{\mathcal{F}} \cong \mathcal{O}_X(2)$; that is, foliations of degree zero with respect to both minimal families of rational curves.

Furthermore, when $n \ge 3$, these are the only components. In other words, the space of codimension-one foliations on $X$ whose normal sheaf is $\mathcal{N} = \mathcal{O}_X(2)$ has exactly two irreducible components.

\end{theorem}

\begin{proof}
Since we are considering codimension-one foliations with normal sheaf $N_{\mathcal{F}} \cong \mathcal{O}_X(2)$, any such foliation belonging to a logarithmic component is defined by a $1$-form that has no zeros in codimension one and has poles on the support of a simple normal crossing divisor $D = \sum_{i=1}^{k} D_i$, where each $D_i$ is an irreducible divisor on $X$. Note that the normal bundle satisfies $\mathcal{O}_X(2) \cong \mathcal{O}_X\bigl(\sum D_i\bigr)$ (see Section~\ref{S:logarithmic components}). 

Recall that we have the two natural projections $\pi_1 \colon X \longrightarrow \mathbb{P}^n$ and $\pi_2 \colon X \longrightarrow \mathbb{P}^n$. For $i = 1,2$, let $h_i$ denote the pullback via $\pi_i$ of the hyperplane class on $\mathbb{P}^n$. 
Consequently, the $1$-form defining $\mathcal{F}$ has poles along a divisor $D = \sum_{i=1}^k D_i$ such that either:
\begin{enumerate}
    \item $k = 2$, $D_1, D_2 \in |\mathcal{O}_X(1)|$, and the residues of this $1$-form along each $D_i$ satisfy $\lambda_1 = -\lambda_2$; or  
    \item $k = 4$, $D_1, D_2 \in |h_1|$, $D_3, D_4 \in |h_2|$, and the residues satisfy $\lambda_1 = -\lambda_2$ and $\lambda_3 = -\lambda_4$.  
\end{enumerate}
Observe that we could also have $k = 3$, with $D_1 \in |\mathcal{O}_X(1)|$, $D_2 \in |h_1|$, $D_3 \in |h_2|$, and the residues satisfying $\lambda_1 = -\lambda_2 = -\lambda_3$. However, this condition on the residues implies that this case is contained in the case where $k = 2$. Thus these are the only cases, and we will denote the logarithmic component in case (1) by $\operatorname{Log}(1,1)$ and the logarithmic component in case (2) by $\operatorname{Log}\bigl((1,0)^2,(0,1)^2\bigr)$. Therefore, we have two logarithmic components in the space of foliations on $X$ with normal sheaf equal to $\mathcal{O}_X(2)$ (see \cite[Lemma 2.5]{LPT13}).

These components also have the following description:

\begin{enumerate}
    \item Consider the rational map 
    \[
    \Psi\colon \mathbb{P} H^0(X, \mathcal{O}_X(1)) \times \mathbb{P} H^0(X, \mathcal{O}_X(1)) \dashrightarrow \mathbb{P} H^0(X, \Omega_X^1 \otimes \mathcal{O}_X(2))
    \]
    \[
    ([f], [g]) \longmapsto [\omega = f\,dg - g\,df].
    \]
    Let $\mathcal{U}$ be the open subset of the projective space $\mathbb{P} H^0(X, \Omega_X^1 \otimes \mathcal{O}_X(2))$ defined by the saturated $1$-forms, that is, 
    \[
    \mathcal{U} = \{ [\omega] \in \mathbb{P} H^0(X, \Omega_X^1 \otimes \mathcal{O}_X(2)) ; \; \operatorname{cod} \operatorname{Sing}(\omega) \ge 2\}.
    \]
    We define the component $\operatorname{Log}(1,1)$ by the intersection of $\mathcal{U}$ with the closure of the image of $\Psi$:
    \[
    \operatorname{Log}(1,1) = \overline{\operatorname{Image}(\Psi)} \cap \mathcal{U}.
    \]
    
    \item Analogously, consider the rational map 
    \[
    \Phi\colon \mathbb{P}(\mathbb{C}^2) \times \mathbb{P} H^0(X, \mathcal{O}_X(h_1))^{\times 2} \times \mathbb{P} H^0(X, \mathcal{O}_X(h_2))^{\times 2} \dashrightarrow \mathbb{P} H^0(X, \Omega_X^1 \otimes \mathcal{O}_X(2))
    \]
    \[
    (([\lambda_1:\lambda_2]), [f_1], [g_1], [f_2], [g_2]) \longmapsto [\omega = \lambda_1 f_2 g_2 (f_1\,dg_1 - g_1\,df_1) + \lambda_2 f_1 g_1 (f_2\,dg_2 - g_2\,df_2)].
    \]
    Consequently, the component $\operatorname{Log}\bigl((1,0)^2,(0,1)^2\bigr)$ is defined by the intersection 
    \[
    \operatorname{Log}\bigl((1,0)^2,(0,1)^2\bigr) = \overline{\operatorname{Image}(\Phi)} \cap \mathcal{U}.
    \]
\end{enumerate}
Now, suppose that $n \ge 3$. By Proposition~\ref{Picard 2-Structural}, either the foliation $\mathcal{F}$ is algebraically integrable, or $\mathcal{F}$ is defined by a closed rational $1$-form without divisorial components in its zero set. In the first case, by Proposition~\ref{algint-picard2}, the foliation $\mathcal{F}$ is given by a pencil of hyperplane sections and hence belongs to the component $\operatorname{Log}(1,1)$. In the latter case, since every such foliation lies in the closure of a logarithmic $1$-form (see \cite[Section 3]{Pereira22}), this leads to the conclusion that such a foliation belongs to one of the two logarithmic components described above. Consequently, the irreducible components of the space of codimension-one foliations on $X$ with normal sheaf equal to $\mathcal{N} = \mathcal{O}_X(2)$ are $\operatorname{Log}(1,1)$ and $\operatorname{Log}\bigl((1,0)^2,(0,1)^2\bigr)$.
\end{proof}

\subsubsection{Foliations on the adjoint variety of \(\mathfrak{sl}(3)\)}\label{S: adjoint sl3}

In this section, we consider $X$ to be the adjoint variety associated with the Lie algebra $\mathfrak{sl}(3)$, which can be realized as a smooth hyperplane section of $\mathbb{P}^2 \times \mathbb{P}^2$. Since we focus on codimension-one foliations with normal sheaf equal to $\mathcal{O}_X(2)$ and the canonical bundle of $X$ is given by $\omega_X = \mathcal{O}_X(-2)$, such foliations have trivial canonical bundle.
According to Proposition~\ref{Picard 2-Structural}, either these foliations are logarithmic, and their description is given in Proposition~\ref{P: logarithmic on PnxPn}, or these foliations are algebraically integrable. 

To construct new examples of algebraically integrable foliations on $X$, we investigate the action on $X$ of Lie subgroups $G \subset \operatorname{Aut}(X) = \operatorname{SL}(3)$ (see Section~\ref{E: fol group actions}). The two-dimensional orbits of such an action induce a foliation on $X$ with tangent sheaf isomorphic to  
\[
T_{\mathcal{F}} \simeq \mathfrak{g} \otimes \mathcal{O}_X \cong \mathcal{O}_X^{\oplus 2},
\]
where $\mathfrak{g}$ is the Lie algebra of $G$.
We begin by studying the foliation on $X$ associated with the affine Lie algebra $\mathfrak{aff}(\mathbb{C}) \subset \mathfrak{sl}(3) = H^0(X, T_X)$. In the subsequent results, we will show that this is the only instance in which a foliation arises from a new irreducible component in the space of foliations with trivial canonical bundle.

\begin{example}\label{example-aff}
Consider the action of the affine subgroup $\operatorname{Aff}(\mathbb{C}) = \mathbb{C}^* \ltimes \mathbb{C} \subset \operatorname{Aut}(\mathbb{P}^1)$ on $\mathbb{P}^2$ induced by the second Veronese embedding $\nu_2 \colon \mathbb{P}^1 \hookrightarrow \mathbb{P}^2$. Under this action, there is a unique closed orbit, given by $X_2 = \nu_2(\mathbb{P}^1)$, and infinitely many open $\operatorname{Aff}(\mathbb{C})$-orbits corresponding to binary forms with simple roots.  
The affine subgroup also acts on the product $\mathbb{P}^2 \times \mathbb{P}^2$, preserving the hyperplane defined by the vanishing trace condition. Consequently, the group $\operatorname{Aff}(\mathbb{C})$ acts on $X$, inducing a codimension-one foliation $\mathcal{F}$ on $X$ with trivial canonical bundle.

Moreover, $H^0(X, T_{\mathcal{F}})$ is isomorphic to the affine Lie algebra $\mathfrak{aff}(\mathbb{C}) = \langle x, y \rangle$, with the relation $[x, y] = y$.  
The foliation $\mathcal{F}$ leaves invariant two distinct surfaces in the linear systems $|\mathcal{O}_X(2 h_1)|$ and $|\mathcal{O}_X(2 h_2)|$. These surfaces correspond to the intersections of the products $\nu_2(\mathbb{P}^1) \times \mathbb{P}^2$ and $\mathbb{P}^2 \times \nu_2(\mathbb{P}^1)$ with the hyperplane defining $X$. Therefore, $\mathcal{F}$ does not belong to the component $\operatorname{Log}(1,1)$. 

Additionally, any algebraically integrable foliation in the component $\operatorname{Log}\bigl((1,0)^2,(0,1)^2\bigr)$ belongs to the intersection of the logarithmic components, implying that $\mathcal{F}$ is not contained in $\operatorname{Log}\bigl((1,0)^2,(0,1)^2\bigr)$.  
By computing $d\omega$, we verify that $\mathcal{F}$ has no non-Kupka singularities. Then, according to \cite[Theorem A]{PS24} (see also \cite[Theorem 51]{velazquez2024moduli}), we conclude that this foliation is rigid.  
Denote by $\operatorname{Aff}(\mathbb{C})$ the irreducible component of the space $\operatorname{Fol}(X, \mathcal{O}_X(2))$ whose general element is conjugate to the foliation $\mathcal{F}$ associated with the $2$-dimensional affine Lie algebra.
\end{example}

\begin{theorem}\label{teo-comp-A2}
Let $X$ be a smooth hyperplane section of $\mathbb{P}^2 \times \mathbb{P}^2$.
The irreducible components of the space of codimension-one foliations on $X$ with trivial canonical bundle are $\operatorname{Aff}(\mathbb{C})$, $\operatorname{Log}(1,1)$, and $\operatorname{Log}\bigl((1,0)^2,(0,1)^2\bigr)$.
\end{theorem}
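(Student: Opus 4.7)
The plan is to apply Proposition \ref{Picard 2-Structural} to the foliation $\mathcal{F}$. Since $\omega_X=\mathcal{O}_X(-2)$, the condition $K_\mathcal{F}=0$ is equivalent to $N_\mathcal{F}\cong \mathcal{O}_X(2)=\mathcal{O}_X(2h_1+2h_2)$, which forces $\deg_{H_i}\mathcal{F}=0$ for $i=1,2$; hence the dichotomy of Proposition \ref{Picard 2-Structural} applies, splitting the argument into the closed rational 1-form case and the algebraically integrable case.

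For the closed rational 1-form case, I would exploit that $X$ is a smooth Fano threefold, so $H^1(X,\mathcal{O}_X)=0$ by Kodaira vanishing, $H^0(X,\Omega^1_X)=0$ by Hodge symmetry, and $X$ is simply connected. Under these assumptions, every closed rational 1-form on $X$ is logarithmic: one writes it in terms of residues on the support of its pole divisor, as in the discussion of Section \ref{S:logarithmic components}. Since the pole divisor has class $2h_1+2h_2$, Proposition \ref{P: logarithmic on PnxPn} forces $\mathcal{F}\in\mathrm{Log}(1,1)\cup \mathrm{Log}((1,0)^2,(0,1)^2)$.

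For the algebraically integrable case, let $f\colon X\dashrightarrow \mathbb{P}^1$ be a rational first integral and let $[F_0]=ah_1+bh_2$ be the class of a general fiber. The Kodaira canonical bundle formula for the foliation gives $K_\mathcal{F}=-2[F_0]+\sum(m_i-1)[F_i]$ where $m_iF_i$ are the multiple fibers, and the condition $K_\mathcal{F}=0$ together with $m_i[F_i]=[F_0]$ forces $[F_0]\cdot(2-\sum(1-1/m_i))=2(h_1+h_2)$. Since $\mathrm{Pic}(X)=\mathbb{Z}h_1\oplus\mathbb{Z}h_2$, this equation forces $a=b$, and one identifies two basic possibilities: either $[F_0]=h_1+h_2$ with no multiple fibers (so $\mathcal{F}$ is a pencil of hyperplane sections, placing it in $\mathrm{Log}(1,1)$), or $[F_0]$ is strictly larger with a nontrivial multiple-fiber contribution. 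In the latter situation, the components of the degenerate fibers produce invariant surfaces of classes $2h_1$ and $2h_2$ exactly as in Example \ref{example-aff}; combining this with the classification of $2$-dimensional connected Lie subgroups of $\mathrm{Aut}(X)=\mathrm{SL}(3)$ acting with $2$-dimensional generic orbits on $X$ identifies the only such subgroup (up to conjugation) as $\mathrm{Aff}(\mathbb{C})$, so $\mathcal{F}$ belongs to the component $\mathrm{Aff}(\mathbb{C})$ described in Example \ref{example-aff}.

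Finally, I would verify pairwise distinctness of the three components: $\mathrm{Log}(1,1)$ and $\mathrm{Log}((1,0)^2,(0,1)^2)$ are separated by generic algebraic integrability (the former is entirely algebraically integrable, the latter generically has non-rational transversely projective structure), while $\mathrm{Aff}(\mathbb{C})$ is separated from both logarithmic components by its rigidity (noted in Example \ref{example-aff}) and by the classes $2h_1,2h_2$ of its invariant surfaces, which cannot arise for generic logarithmic 1-forms with normal sheaf $\mathcal{O}_X(2)$. The main obstacle is the algebraically integrable subcase with $[F_0]$ larger than $h_1+h_2$: showing that \emph{every} such first integral arises from the $\mathrm{Aff}(\mathbb{C})$-action, rather than allowing some exotic pencil with reducible fibers of mixed bidegree, requires the careful case analysis of possible multiplicities and invariant divisors combined with the Lie-algebraic classification above.
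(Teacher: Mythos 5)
Your overall strategy is genuinely different from the paper's. You run the dichotomy of Proposition \ref{Picard 2-Structural} (closed rational $1$-form versus algebraically integrable) and then try to settle the algebraically integrable case by analysing the class of the general fibre of a first integral. The paper instead never passes through first integrals: it reduces to the splitting $T_\mathcal{F}\cong T_\mathcal{G}\oplus T_\mathcal{H}$ via \cite[Lemma 4.2]{LPT13}, concludes that $\mathcal{F}$ is either tangent to a $\mathbb{C}$- or $\mathbb{C}^*$-action with non-isolated fixed points or induced by a two-dimensional Lie subgroup of $\mathrm{Aut}(X)=\mathrm{SL}(3)$, and then classifies those actions (Propositions \ref{C*-action-prop}, \ref{C-action-prop} and the last proposition of Section \ref{sec5-Picard2}). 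Your treatment of the closed-$1$-form case and of the pairwise distinctness of the three components is consistent with Proposition \ref{P: logarithmic on PnxPn} and Example \ref{example-aff}, and your reduction $K_\mathcal{F}=0\Leftrightarrow N_\mathcal{F}\cong\mathcal{O}_X(2)\Rightarrow\deg_{H_i}\mathcal{F}=0$ is correct.

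However, there is a genuine gap exactly where you flag "the main obstacle", and it is not a technicality that careful bookkeeping would fix within your framework. In the algebraically integrable subcase with $[F_0]=a(h_1+h_2)$, $a\ge 2$, you assert that the degenerate fibres produce invariant surfaces of classes $2h_1$ and $2h_2$ and then invoke "the classification of $2$-dimensional connected Lie subgroups of $\mathrm{SL}(3)$ acting with $2$-dimensional generic orbits". Neither step is justified: nothing in your argument forces the reducible fibres to split with those particular bidegrees (rather than, say, $h_1+(h_1+2h_2)$, or with multiplicities), and, more seriously, nothing in your argument shows that such a foliation is \emph{homogeneous}, i.e. that $T_\mathcal{F}$ is generated by global vector fields spanning a two-dimensional Lie subalgebra of $\mathfrak{sl}(3)=H^0(X,T_X)$. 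The classification of subgroups of $\mathrm{SL}(3)$ is only relevant once one knows $T_\mathcal{F}\cong\mathcal{O}_X\oplus\mathcal{O}_X$; this is precisely what the paper extracts from the vanishing $H^1(X,K_X\otimes K_\mathcal{G}^{\otimes 2}\otimes N_\mathcal{F})=0$ and \cite[Lemma 4.2]{LPT13}, and it is the step your proposal is missing. Note also that the affine foliation of Example \ref{example-aff} \emph{is} algebraically integrable (its generic orbits are two-dimensional and algebraic), so Proposition \ref{algint-picard2} genuinely fails for $n=2$ and the exotic subcase cannot be dismissed; without the triviality of $T_\mathcal{F}$ you have no mechanism to exclude an algebraically integrable pencil with $a\ge 2$ that is not an $\mathrm{SL}(3)$-orbit foliation.
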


Theorem \ref{teo-comp-A2} is a direct consequence of the following lemma and the three next propositions.

\begin{lemma}[{\cite[Lemma 4.2]{LPT13}}]
Let $X$ be a projective $3$-fold, $\mathcal{G}$ a one-dimensional foliation on $X$ with isolated singularities, and $\mathcal{F}$ a codimension-one foliation containing $\mathcal{G}$. If $H^1(X, K_X \otimes K_{\mathcal{G}}^{\otimes 2} \otimes N_{\mathcal{F}}) = 0$, then $T_{\mathcal{F}} \cong T_{\mathcal{G}} \oplus T_{\mathcal{H}}$ for a suitable one-dimensional foliation $\mathcal{H}$ on $X$.
\end{lemma}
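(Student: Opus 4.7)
The plan is to realize $T_{\mathcal{F}}\cong T_{\mathcal{G}}\oplus T_{\mathcal{H}}$ as the splitting of a short exact sequence and kill the resulting obstruction with the given cohomological hypothesis. Because $\mathcal{G}$ has isolated singularities on the smooth threefold $X$, the sheaf $T_{\mathcal{G}}$ is reflexive of rank one, hence a line bundle. The saturated inclusion $T_{\mathcal{G}}\subset T_{\mathcal{F}}$ makes the quotient $\mathcal{Q}:=T_{\mathcal{F}}/T_{\mathcal{G}}$ torsion-free of rank one; computing determinants in $0\to T_{\mathcal{F}}\to T_X\to N_{\mathcal{F}}\to 0$ gives $\det T_{\mathcal{F}}=K_{\mathcal{F}}^{\vee}$, and a second determinant computation identifies the reflexive hull $\mathcal{L}:=\mathcal{Q}^{\vee\vee}$ with the line bundle $K_{\mathcal{F}}^{\vee}\otimes T_{\mathcal{G}}^{\vee}$. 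The decomposition sought is precisely a splitting of
\begin{equation*}
0\longrightarrow T_{\mathcal{G}}\longrightarrow T_{\mathcal{F}}\longrightarrow \mathcal{Q}\longrightarrow 0,
\end{equation*}
whose obstruction is the extension class in $\mathrm{Ext}^1_{\mathcal{O}_X}(\mathcal{Q},T_{\mathcal{G}})$, so it suffices to show this group vanishes.

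To analyse $\mathrm{Ext}^1(\mathcal{Q},T_{\mathcal{G}})$ I would use the local-to-global spectral sequence
\begin{equation*}
E_2^{p,q}=H^p\bigl(X,\mathcal{E}xt^q_{\mathcal{O}_X}(\mathcal{Q},T_{\mathcal{G}})\bigr)\Longrightarrow\mathrm{Ext}^{p+q}_{\mathcal{O}_X}(\mathcal{Q},T_{\mathcal{G}}),
\end{equation*}
reducing the task to killing $E_2^{1,0}$ and $E_2^{0,1}$. Reflexivity of $T_{\mathcal{G}}$ gives $\mathcal{H}om(\mathcal{Q},T_{\mathcal{G}})=\mathcal{L}^{\vee}\otimes T_{\mathcal{G}}$, which after unpacking $K_{\mathcal{F}}=K_X\otimes N_{\mathcal{F}}$ becomes precisely the line bundle $K_X\otimes K_{\mathcal{G}}^{\otimes 2}\otimes N_{\mathcal{F}}$ appearing in the hypothesis, so $H^1(X,\mathcal{H}om(\mathcal{Q},T_{\mathcal{G}}))=0$ by assumption.

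For the term $E_2^{0,1}$, the sheaf $\mathcal{E}xt^1(\mathcal{Q},T_{\mathcal{G}})\cong T_{\mathcal{G}}\otimes\mathcal{E}xt^1(\mathcal{Q},\mathcal{O}_X)$ is supported on the locus where $\mathcal{Q}$ fails to be locally free. Here reflexivity of $T_{\mathcal{F}}$ on the smooth threefold is crucial: it forces the non-locally-free locus of $T_{\mathcal{F}}$ to have codimension at least three, and combined with the isolated singularities of $\mathcal{G}$ this shows that the non-locally-free locus of $\mathcal{Q}$ is a zero-dimensional subscheme $Z\subset X$ with $\mathcal{Q}\simeq\mathcal{L}\otimes\mathcal{I}_Z$. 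From $0\to\mathcal{I}_Z\to\mathcal{O}_X\to\mathcal{O}_Z\to 0$ one obtains $\mathcal{E}xt^1(\mathcal{I}_Z,\mathcal{O}_X)\cong \mathcal{E}xt^2(\mathcal{O}_Z,\mathcal{O}_X)$, and the Auslander--Buchsbaum formula applied to the finite-length stalks of $\mathcal{O}_Z$ over the regular three-dimensional local rings $\mathcal{O}_{X,p}$ shows these stalks are Cohen--Macaulay of codimension three, hence with vanishing local Ext in degrees other than three. Consequently $\mathcal{E}xt^1(\mathcal{Q},T_{\mathcal{G}})=0$, $\mathrm{Ext}^1(\mathcal{Q},T_{\mathcal{G}})=0$, and the sequence splits. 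The rank-one summand $T_{\mathcal{H}}\subset T_{\mathcal{F}}\subset T_X$ is automatically involutive (any rank-one subsheaf of $T_X$ is) and saturated, so it defines the required one-dimensional foliation $\mathcal{H}$. The main delicacy is the structural claim that the non-locally-free locus of $\mathcal{Q}$ is zero-dimensional, which relies in a sharp way on the codimension-three bound for the singular locus of a reflexive rank-two sheaf on a smooth threefold, together with the hypothesis that $\mathcal{G}$ has only isolated singularities.
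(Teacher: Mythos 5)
The paper does not actually prove this lemma; it is quoted verbatim from \cite[Lemma 4.2]{LPT13}, so there is no internal proof to compare against, and your argument reconstructs what is essentially the argument given there. The structure is sound: $T_{\mathcal{G}}$ is a line bundle (though note that its reflexivity follows from being saturated in $T_X$, not from the isolated singularities), the quotient $\mathcal{Q}=T_{\mathcal{F}}/T_{\mathcal{G}}$ is torsion-free of rank one because $T_{\mathcal{G}}$ remains saturated inside $T_{\mathcal{F}}$, and the splitting is obstructed by a class in $\mathrm{Ext}^1(\mathcal{Q},T_{\mathcal{G}})$, which you kill via the local-to-global sequence. The key geometric input --- that $\mathcal{Q}\cong\mathcal{L}\otimes\mathcal{I}_Z$ with $Z$ zero-dimensional --- rests on exactly the two facts you name, but the deduction deserves one more sentence: away from the finite set where the reflexive sheaf $T_{\mathcal{F}}$ fails to be locally free, $\mathcal{Q}$ is non-locally-free precisely where $T_{\mathcal{G}}\otimes k(p)\to T_{\mathcal{F}}\otimes k(p)$ vanishes, and since this map factors the evaluation $T_{\mathcal{G}}\otimes k(p)\to T_X\otimes k(p)$, that locus is contained in $\mathrm{Sing}(\mathcal{G})$, which is finite by hypothesis. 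Granting that, the Cohen--Macaulay computation killing $\mathcal{E}xt^1(\mathcal{Q},T_{\mathcal{G}})$ is correct, as is the final observation that the rank-one summand is involutive and saturated, hence a foliation $\mathcal{H}$.

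The one place you should not wave your hands is the identification of $H^1(X,\mathcal{H}om(\mathcal{Q},T_{\mathcal{G}}))$ with the group in the hypothesis. Your own determinant computation gives $\mathcal{L}=K_{\mathcal{F}}^{\vee}\otimes T_{\mathcal{G}}^{\vee}$, hence $\mathcal{H}om(\mathcal{Q},T_{\mathcal{G}})=\mathcal{L}^{\vee}\otimes T_{\mathcal{G}}=K_X\otimes N_{\mathcal{F}}\otimes T_{\mathcal{G}}^{\otimes 2}$. With the paper's convention $K_{\mathcal{G}}=\det T_{\mathcal{G}}^{\vee}=T_{\mathcal{G}}^{\vee}$, this reads $K_X\otimes K_{\mathcal{G}}^{\otimes(-2)}\otimes N_{\mathcal{F}}$, not $K_X\otimes K_{\mathcal{G}}^{\otimes 2}\otimes N_{\mathcal{F}}$ as you assert; the two coincide only if one reads $K_{\mathcal{G}}$ as $\det T_{\mathcal{G}}$, or in the only situation where the lemma is applied in this paper (where $T_{\mathcal{G}}\cong\mathcal{O}_X$ comes from a vector field in $\mathfrak{sl}(3)$, so both twists are $K_X\otimes N_{\mathcal{F}}$). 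You should either make the convention explicit or record the twist honestly as $T_{\mathcal{G}}^{\otimes 2}$; as written, the claim that your bundle ``becomes precisely'' the one in the statement hides a sign on the exponent. You should also state that $X$ is assumed smooth, since both Hartshorne's codimension-three bound for the singular locus of a reflexive sheaf and the Auslander--Buchsbaum step over regular local rings require it.
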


\begin{proposition}\label{C*-action-prop}
Let $X$ be a smooth hyperplane section of $\mathbb{P}^2 \times \mathbb{P}^2$.
Let $\mathcal{F}$ be a codimension-one foliation on $X$ with trivial canonical bundle. Suppose that there exists an algebraic $\mathbb{C}^*$-action with non-isolated fixed points tangent to $\mathcal{F}$. Then $\mathcal{F}$ is defined by a closed rational $1$-form without divisorial components in its zero set.
\end{proposition}

\begin{proof}
We start by fixing the coordinates $((x_0:x_1:x_2), (y_0:y_1:y_2))$ on $\mathbb{P}^2 \times \mathbb{P}^2$. Assume that $X \subset \mathbb{P}^2 \times \mathbb{P}^2$ is given by the equation $\{x_0 y_0 + x_1 y_1 + x_2 y_2 = 0\}$ and that a $\mathbb{C}^*$-action on $X$ is given by
\[
\phi_{(a,b)} \colon \mathbb{C}^* \times X \longrightarrow X,
\]
\[
(\lambda, (x_0:x_1:x_2), (y_0:y_1:y_2)) \longmapsto (x_0: x_1 \lambda^a: x_2 \lambda^b, \; y_0: y_1 \lambda^{-a}: y_2 \lambda^{-b}),
\]
with $a, b \in \mathbb{N}$ relatively prime, since $\operatorname{Aut}(X) = \operatorname{SL}(3)$ has rank $2$. If $a$ and $b$ are distinct non-zero natural numbers, then the fixed points of the action are isolated. Hence, we have to analyze two cases:
\begin{enumerate}
    \item $(a, b) = (0, 1)$: Consider the rational map
    \[
    \phi \colon \mathbb{P}^2 \times \mathbb{P}^2 \dashrightarrow \mathbb{P}^1 \times \mathbb{P}^1,
    \]
    \[
    ((x_0:x_1:x_2), (y_0:y_1:y_2)) \longmapsto (x_0:x_1, y_0:y_1).
    \]
    Denote by $\phi_0$ the restriction of $\phi$ to $X$. The general fiber of $\phi$ corresponds to an orbit of the action induced by $\phi_{(0,1)}$. Hence, the foliation $\mathcal{F}$ is the pullback of a foliation $\mathcal{G}$ on $\mathbb{P}^1 \times \mathbb{P}^1$. Since $\phi_0^* \mathcal{O}_{\mathbb{P}^1 \times \mathbb{P}^1}(1) = \mathcal{O}_X(1)$ and the critical set of $\phi$ has codimension $2$, it follows that $N_{\mathcal{G}} = \mathcal{O}_{\mathbb{P}^1 \times \mathbb{P}^1}(2)$. Consequently, $\mathcal{G}$ is given by a closed rational $1$-form. Thus, the foliation $\mathcal{F}$, the pullback of $\mathcal{G}$ by $\phi_0$, is defined by a closed rational $1$-form without divisorial components in its zero set.

    \item $(a, b) = (1, 1)$: Consider the rational map
    \[
    \phi \colon \mathbb{P}^2 \times \mathbb{P}^2 \dashrightarrow \mathbb{P}^4,
    \]
    \[
    ((x_0:x_1:x_2), (y_0:y_1:y_2)) \longmapsto (x_0 y_0 : x_1 y_1 : x_1 y_2 : x_2 y_1 : x_2 y_2).
    \]
    Fix the coordinates $(z_0 : z_1 : z_2 : z_3 : z_4)$ in $\mathbb{P}^4$. Then $X$ is mapped into the intersection of the hyperplane $H = \{z_0 + z_1 + z_4 = 0\}$ and the quadric $Q = \{z_2 z_3 - z_1 z_4 = 0\}$. We denote by $\phi_0 \colon X \dashrightarrow \mathbb{P}^1 \times \mathbb{P}^1$ the restriction of $\phi$ to $X$.

    Since the monomials defining $\phi$ are invariant under the action, the dimension of the general fiber of $\phi_0$ is one. This implies that the general fiber corresponds to an orbit of the action induced by $\phi_{(1,1)}$. Hence, the foliation $\mathcal{F}$ is the pullback of a foliation $\mathcal{G}$ on $\mathbb{P}^1 \times \mathbb{P}^1$. Notice that $\phi_0^* \mathcal{O}_{\mathbb{P}^1 \times \mathbb{P}^1}(1) = \mathcal{O}_X(1)$. The result follows similarly to the first case.
\end{enumerate}
\end{proof}

\begin{proposition}\label{C-action-prop}
Let $X$ be a smooth hyperplane section of $\mathbb{P}^2 \times \mathbb{P}^2$.
Let $\mathcal{F}$ be a codimension-one foliation on $X$ with trivial canonical bundle. Suppose that there exists an algebraic $\mathbb{C}$-action with non-isolated fixed points tangent to $\mathcal{F}$. Then $\mathcal{F}$ is defined by a closed rational $1$-form without divisorial components in its zero set.
\end{proposition}

\begin{proof}
Fix the coordinates $((x_0:x_1:x_2), (y_0:y_1:y_2))$ on $\mathbb{P}^2 \times \mathbb{P}^2$. Assume that $X \subset \mathbb{P}^2 \times \mathbb{P}^2$ is given by the equation $\{x_0 y_0 + x_1 y_1 + x_2 y_2 = 0\}$, and let $\phi \colon \mathbb{C} \times X \longrightarrow X$ be an algebraic $\mathbb{C}$-action. This action is of the form $\phi(t) = \exp(t \cdot n)$, where $n$ is a nilpotent element of the Lie algebra $\mathfrak{aut}(X) = \mathfrak{sl}(3)$. Since there are only two conjugacy classes of non-zero nilpotent elements in $\mathfrak{sl}(3)$, we have the following cases to analyze:
\[
n_1 = \begin{pmatrix}
    0 & 0 & 1 \\
    0 & 0 & 0 \\
    0 & 0 & 0
\end{pmatrix} \qquad \text{and} \qquad n_2 = \begin{pmatrix}
    0 & 1 & 0 \\
    0 & 0 & 1 \\
    0 & 0 & 0
\end{pmatrix}.
\]
Since the fixed points of the action associated with $n_2$ are isolated, we only need to consider the nilpotent element $n_1$. The action on $X$ is given by 
\[
(\lambda, (x_0:x_1:x_2), (y_0:y_1:y_2)) \mapsto (x_0 + \lambda x_2, x_1, x_2), (y_0, y_1, -\lambda y_0 + y_2).
\]

Consider the rational map 
\[
\psi \colon \mathbb{P}^2 \times \mathbb{P}^2 \dashrightarrow \mathbb{P}^4
\]
given by 
\[
((x_0:x_1:x_2), (y_0:y_1:y_2)) \longmapsto (x_0 y_0 + x_2 y_2 : x_1 y_0 : x_1 y_1 : x_2 y_0 : x_2 y_1).
\]
Fixing the coordinates $(z_0 : z_1 : z_2 : z_3 : z_4)$ in $\mathbb{P}^4$, the variety $X$ is mapped into the intersection of the hyperplane $H = \{z_0 + z_2 = 0\}$ and the quadric $Q = \{z_2 z_3 - z_1 z_4 = 0\}$. 
We denote by 
\(
\psi_0 \colon X \dashrightarrow \mathbb{P}^1 \times \mathbb{P}^1
\)
the restriction of $\psi$ to $X$. The general fiber of $\psi_0$ coincides with an orbit of the action associated with $n_1$. Note that the critical set of $\psi_0$ has codimension greater than two. Consequently, the foliation $\mathcal{F}$ is the pullback of a foliation $\mathcal{G}$ on $\mathbb{P}^1 \times \mathbb{P}^1$ with normal sheaf equal to $\mathcal{O}_{\mathbb{P}^1 \times \mathbb{P}^1}(2)$ and given by a closed rational $1$-form. Therefore, $\mathcal{F}$ is defined by a closed rational $1$-form without divisorial components in its zero set.
\end{proof}

\begin{proposition}
Let $X$ be a smooth hyperplane section of $\mathbb{P}^2 \times \mathbb{P}^2$.
Let $\mathcal{F}$ be a codimension-one foliation on $X$ with trivial canonical bundle. Suppose that $\mathcal{F}$ is induced by an algebraic action of a two-dimensional Lie subgroup of $\operatorname{Aut}(X) = \operatorname{SL}(3)$. Then $\mathcal{F}$ is defined by a closed $1$-form without codimension-one zeros, or $\mathcal{F}$ is conjugate to the foliation associated to the affine Lie algebra.
\end{proposition}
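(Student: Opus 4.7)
The plan is to classify, up to $SL(3)$-conjugation, the $2$-dimensional complex Lie subalgebras $\mathfrak{h} \subset \mathfrak{sl}(3)$ of such a Lie subgroup $G$, and to reduce each case either to Proposition~\ref{C*-action-prop} or Proposition~\ref{C-action-prop} (whenever $G$ contains a one-parameter subgroup with non-isolated fixed points on $X$), or to the affine embedding of Example~\ref{example-aff}. Recall that a complex $2$-dimensional Lie algebra is either abelian or isomorphic to $\mathfrak{aff}(\mathbb{C})$, so the argument naturally splits in two cases. In both of them every one-parameter subgroup of $G$ is automatically tangent to $\mathcal{F}$, since its orbits are contained in the $G$-orbits foliating $X$.

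\textbf{Abelian case.} By Lie's theorem the solvable algebra $\mathfrak{h}$ may be conjugated into the upper triangular Borel $\mathfrak{b} = \mathfrak{t} \oplus \mathfrak{n}$ of $\mathfrak{sl}(3)$. Letting $\pi \colon \mathfrak{b} \to \mathfrak{t}$ be the Cartan projection, I would split according to $\dim \pi(\mathfrak{h})$. If $\pi(\mathfrak{h}) = 0$, the abelian condition in $\mathfrak{n} = \mathrm{span}(E_{12}, E_{13}, E_{23})$ (in which the only nontrivial bracket is $[E_{12}, E_{23}] = E_{13}$) forces $\mathfrak{h}$ to contain a rank-one nilpotent, whose $\mathbb{C}$-action fixes a $\mathbb{P}^1$ inside at least one $\mathbb{P}^2$ factor. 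If $\dim \pi(\mathfrak{h}) = 1$, expanding the vanishing bracket in a basis $(h_0 + n_1, n_2)$ with $h_0 \in \mathfrak{t}$, $n_1, n_2 \in \mathfrak{n}$ forces $n_2$ to lie in a weight space of $h_0$ or $h_0$ to be non-regular, producing either a rank-one nilpotent or a semisimple element with a repeated eigenvalue. If $\pi(\mathfrak{h}) = \mathfrak{t}$, then $\mathfrak{h}$ contains lifts of every diagonal $(d_1, d_2, -d_1 - d_2)$, and those with $d_i = d_j$ yield $\mathbb{C}^*$-actions with non-isolated fixed points. In each subcase Proposition~\ref{C*-action-prop} or Proposition~\ref{C-action-prop} yields the desired conclusion.

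\textbf{Non-abelian case.} Write $\mathfrak{h} = \langle x, y \rangle$ with $[x, y] = y$. Simultaneous triangularization forces the diagonal of $y$ to vanish, so $y$ is a nilpotent element of $\mathfrak{sl}(3)$. If $y$ has Jordan type $(2,1)$, the $\mathbb{C}$-action $\exp(t y)$ has non-isolated fixed points on $X$ and Proposition~\ref{C-action-prop} applies. The main case is $y$ of regular Jordan type $(3)$. After $SL(3)$-conjugation I may take $y = E_{12} + E_{23}$, and by Jacobson--Morozov the element $h := \mathrm{diag}(2, 0, -2)$ satisfies $[h/2, y] = y$. Since $y$ is regular nilpotent, $\mathfrak{z}_{\mathfrak{sl}(3)}(y) = \mathrm{span}(y, y^2)$ with $y^2 = E_{13}$, so every solution of $[x, y] = y$ takes the form $x = h/2 + a y + b y^2$. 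A direct computation gives $[y, h/2] = -y$ and $[y^2, h/2] = -2 y^2$, with higher brackets vanishing, so conjugation by $\exp(a y)\exp((b/2) y^2)$ fixes $y$ and reduces $x$ to $h/2$. Therefore $\mathfrak{h}$ is conjugate to $\mathrm{span}(h/2, y)$, which after a diagonal $SL(3)$-rescaling coincides with the image of the Borel of $\mathfrak{sl}(2)$ under the symmetric square representation $\mathrm{Sym}^2 \colon \mathfrak{sl}(2) \hookrightarrow \mathfrak{sl}(3)$. This is precisely the affine embedding of Example~\ref{example-aff}, and the induced foliation is conjugate to the one described there.

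The main obstacle I anticipate is the abelian case, where one must verify systematically that no $2$-dimensional abelian subalgebra of $\mathfrak{sl}(3)$ consists solely of elements whose induced action on $X$ has only isolated fixed points; the analysis is elementary but requires handling the three subcases above. The non-abelian reduction is structurally cleaner once the regular nilpotent orbit is isolated, and the explicit Jacobson--Morozov and centralizer calculations in $\mathfrak{sl}(3)$ truncate at low order, making the conjugation argument fit on one page.
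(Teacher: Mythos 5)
Your proposal is correct and follows the same overall strategy as the paper: reduce everything to Propositions \ref{C*-action-prop} and \ref{C-action-prop} by locating a one-parameter subgroup of $G$ with non-isolated fixed points, and show that the only surviving case is the regular-nilpotent affine algebra of Example \ref{example-aff}. The one genuine difference is in the abelian case: the paper stratifies by the isomorphism type of $G$ ($\mathbb{C}^*\times\mathbb{C}^*$, $\mathbb{C}\times\mathbb{C}^*$, $\mathbb{C}\times\mathbb{C}$) and, when the only nilpotent available is regular, uses that its centralizer $\mathrm{span}(n,n^2)$ must contain all of $\mathfrak{g}$ and hence the rank-one nilpotent $n^2$; you instead triangularize into the Borel and stratify by $\dim\pi(\mathfrak{h})$, which covers the same ground. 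Two small imprecisions in your version are worth fixing: in the subcase $\dim\pi(\mathfrak{h})=1$ the alternative ``a semisimple element with a repeated eigenvalue'' is not literally an element of $\mathfrak{h}$ (only $h_0+n_1$ is, and it need not be semisimple) --- but the bracket computation actually shows $n_2$ is always a rank-one nilpotent, so you should state that instead; and in the subcase $\pi(\mathfrak{h})=\mathfrak{t}$ you should note that the abelian condition forces $\mathfrak{h}$ to be conjugate to the Cartan before invoking a $\mathbb{C}^*$-action with non-isolated fixed points, since an arbitrary lift of a non-regular diagonal element need not be semisimple. In the non-abelian case your normalization of $x$ via $\exp(ay)\exp((b/2)y^2)$ is actually more careful than the paper's, which only exhibits the diagonal solutions of $[x,y]=y$.
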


\begin{proof}
Let $G \subset \operatorname{Aut}(X)$ be the subgroup defining $\mathcal{F}$, and let $\mathfrak{g} \subset \mathfrak{sl}(3)$ be its Lie algebra. If $G$ is abelian, then we have three cases to analyze: $\mathbb{C}^* \times \mathbb{C}^*$, $\mathbb{C} \times \mathbb{C}^*$, or $\mathbb{C} \times \mathbb{C}$.  
In the first case, every element in $\mathfrak{g}$ is semisimple. Since the rank of $\mathfrak{sl}(3)$ is two, $\mathfrak{g}$ is a Cartan subalgebra of $\mathfrak{sl}(3)$. Hence, the action of $G$ on $X$ is given by a subgroup of the form  
\[
\varphi_{\lambda,\mu}((x_0:x_1:x_2), (y_0:y_1:y_2)) = (x_0: x_1\lambda^a: x_2 \mu^b, \; y_0: y_1 \lambda^{-a}: y_2 \mu^{-b}), \quad \lambda, \mu \in \mathbb{C}^*.
\]  
Therefore, we can find $\mathbb{C}^* \subset G$ inducing an algebraic action with non-isolated fixed points tangent to $\mathcal{F}$. Hence, we can apply Proposition \ref{C*-action-prop} to conclude that $\mathcal{F}$ is induced by a closed rational $1$-form without divisorial components in its zero set.  

In the last two cases, $\mathfrak{g}$ contains a nilpotent element $n$, which defines a $\mathbb{C}$-action on $X$, corresponding to an inclusion $\mathbb{C} \subset G$. If the corresponding action has non-isolated fixed points, Proposition \ref{C-action-prop} implies that the foliation $\mathcal{F}$ is defined by a closed rational $1$-form without divisorial components in its zero set.  
If the corresponding action has only isolated fixed points, the action corresponds to a nilpotent element of the form  
\[
n = \begin{pmatrix}
    0 & \lambda & 0 \\
    0 & 0 & \lambda \\
    0 & 0 & 0
\end{pmatrix}.
\]  
The centralizer $C(n)$ of $n$ in $\mathfrak{sl}(3)$ is given by matrices of the form  
\(
\begin{pmatrix}
    0 & a & b \\
    0 & 0 & a \\
    0 & 0 & 0
\end{pmatrix}.
\)  
Since $\mathfrak{g}$ is abelian, we have $\mathfrak{g} \subset C(n)$, and consequently $\mathfrak{g}$ contains another nilpotent element, which defines an algebraic action with non-isolated fixed points. Thus, by Proposition \ref{C-action-prop}, the result follows.  

On the other hand, if $G$ is not abelian, then its Lie algebra $\mathfrak{g}$ is isomorphic to the affine Lie algebra $\mathbb{C} x \oplus \mathbb{C} y$ with the relation $[x, y] = y$. Note that $y$ is a nilpotent element, and we have an algebraic action on $X$ induced by an inclusion $\mathbb{C} \subset G$. Analogously to the last cases, if this action has non-isolated fixed points, then applying Proposition \ref{C-action-prop}, the result follows. Suppose now that this action has isolated fixed points. Then $y$ must be of the form  
\[
\begin{pmatrix}
    0 & \lambda & 0 \\
    0 & 0 & \lambda \\
    0 & 0 & 0
\end{pmatrix},
\]  
and the elements $x$ in $\mathfrak{sl}(3)$ satisfying $[x, y] = y$ are of the form  
\(
\begin{pmatrix}
    a & 0 & 0 \\
    0 & 0 & 0 \\
    0 & 0 & -a
\end{pmatrix}.
\)  
Hence, up to automorphisms of $X$, there is only one foliation on $X$ which is not invariant by an algebraic action of a one-dimensional Lie subgroup with non-isolated fixed points: the foliation associated with the $2$-dimensional affine Lie algebra.
\end{proof}
\subsection*{Acknowledgements}
I would like to thank my PhD advisors  Carolina Araujo and Daniele Faenzi,  for all the support and guidance in the process of construction
 of this paper. I am also grateful to
Jorge Vitório Pereira, for all the discussions we had during his visit to Dijon in May 2024, which was crucial for the development of this work. I am  really grateful for all the discussions I had with  
Alan Muniz and Vladimiro Benedetti, I've learned a lot from them.   I also thank Calum Spicer  for his careful reading and for providing corrections and suggestions that helped improve the paper. Finally, I would  like to thank Gabriel Fazoli and Caio Melo for always being open to answering my questions about foliations.

I was supported by CNPq Grant Number 140986/2021-9 and  CAPES - PDSE - Programa de Doutorado Sanduíche no Exterior Grant Number 88881.846472/2023-01.
This work also was partially supported by the CAPES-COFECUB programme (project number: Ma 1017/24), funded by the French Ministry for Europe and Foreign Affairs, the French Ministry for Higher Education, and CAPES. 
 
 \bibliographystyle{alphaurl}
\bibliography{bib}
 
\end{document}